\let\emptyset \undefined
\let\ge       \undefined
\let\le       \undefined
\let\leq\le
\let\geq\ge
\theoremstyle{plain}
\newtheorem{theorem}{Theorem}[section]
\theoremstyle{remark}
\newtheorem{remark}[theorem]{Remark}
\newtheorem{example}[theorem]{Example}
\theoremstyle{plain}
\newtheorem{lemma}[theorem]{Lemma}
\newtheorem{proposition}[theorem]{Proposition}
\numberwithin{equation}{section}
\begin{document}

\title[Parameter estimation for SPDEs of second order]
{Parameter estimation for stochastic partial differential equations of second order}

\author{Josef Jan\' ak}
\address{Department of Mathematics\\
University of Economics in Prague\\
Ekonomick\' a 957, 148 00 Prague 4\\
Czech Republic}
\email{janj04@vse.cz}

\keywords{Parameter estimation, strong consistency, asymptotic normality}

\subjclass[2000]{62M05, 93E10, 60G35, 60H15}

\date\today

\begin{abstract} 
Stochastic partial differential equations of second order with two unknown parameters are studied. Based on ergodicity, two suitable families of minimum constrast estimators are introduced. Strong consistency and asymptotic normality of estimators are proved. The results are applied to hyperbolic equations perturbed by Brownian noise.
\end{abstract}

\thanks{This paper has been produced with contribution of long term
institutional support of research activities by Faculty of Informatics
and Statistics, University of Economics, Prague.\\
This paper was supported by the GA\v CR Grant no. 15--08819S}

\maketitle

\section{Introduction}
Statistical inference for stochastic partial differential equations driven by standard Brownian motion has been recently extensively studied.
While many authors use maximum likelihood estimators (MLE) as the most frequent tool (for example \cite{tudor},
where the parameter is linearly built in the drift), we are interested in minimum contrast estimator (MCE),
which has been studied since 1980's (see \cite{koskiloges} and \cite{koskiloges 2}). In more recent works, the (MCE) has also been provided for
the SPDEs driven by fractional Brownian motion (for example \cite{maslowskitudor} or \cite{maslowskipospisil}).

In this work, we study parameter estimation for SPDEs of second order, in particular, for the following wave equation with strong damping
\begin{align}
\frac{\partial^2 u}{\partial t^2} (t, \xi) &= b\Delta u(t, \xi) - 2a \frac{\partial u}{\partial t}(t, \xi) + \eta (t, \xi), \quad (t, \xi) \in \mathbb R_+ \times D, \label{example 0} \\
u(0, \xi) &= u_1(\xi), \quad \xi \in D, \notag \\
\frac{\partial u}{\partial t} (0, \xi) &= u_2(\xi), \quad \xi \in D, \notag \\
u(t, \xi) &= 0, \quad (t, \xi) \in \mathbb R_+ \times \partial D, \notag
\end{align}
where $D \subset \mathbb R^d$ is a bounded domain with a smooth boundary and $\eta$ is a random noise.

The aim of the paper is to provide strongly consistent estimators of unknown parameters $a$ and $b$,
based on the observation of the trajectory of the process $(u(t, \xi), \, 0 \leq t \leq T, \, \xi \in D)$,
which is the solution to \eqref{example 0}, up to time $T$. In order to do so, we follow up the work \cite{maslowskipospisil},
where minimum contrast estimators based on ergodic theorems were derived for analogous parabolic problems.

The present paper analyzes the problem second order in time. Strongly continuous semigroup $(S(t), t \geq 0)$ generated by the operator in the drift part is computed. The form of covariance operator $Q_{\infty}^{(a,b)}$, the covariance operator of the invariant measure of system \eqref{example 0}, is found and a strongly consistent family of estimators is established, which corresponds to the "classical" approach (cf. \cite{maslowskipospisil}). Moreover, an alternative family of estimators is proposed, and comparison of some basic properties shows, that this new family of estimators is in some sense better then the "classical" one. (See Theorem \ref{is smaller} for more detail.)


Note that in \cite{maslowskipospisil} the driving noise is a fractional Brownian motion (fBm) while in the present paper only standard Wiener process is considered. The main difficulty consists in the fact that the dependence of $Q_{\infty}^{(a,b)}$ on parameters in the present case is complicated and not explicit. However, statistical inference for (fBm)--driven second order equation will be studied in a forthcoming paper.

The paper is organized as follows. The Section \ref{preliminaries} summarizes some basic facts on stochastic linear partial differential equations, which is mostly due to \cite{dapratozabczyk}. In Section \ref{main results}, we introduce the setup as well as some assumptions which are needed. Then we compute the form of semigroup $(S(t), t \geq 0)$ and the form of covariance operator $Q_{\infty}^{(a,b)}$ for three different cases. Although the forms of semigroup $(S(t), t \geq 0)$ are different, all three formulae for the covariance operator coincide. These results are summarized in Subsection \ref{subsection summary}.

In Section \ref{parameter estimation}, the family of strongly consistent estimators $(\hat{a}_T, \hat{b}_T)$ is derived, which specify the general result from \cite{maslowskipospisil} to the present (second order in time) case. Moreover, new family of strongly consistent estimators $(\tilde{a}_T, \tilde{b}_T)$ is proposed. The asymptotic normality of both $(\hat{a}_T, \hat{b}_T)$ and $(\tilde{a}_T, \tilde{b}_T)$ is shown in Section \ref{section: AN}. In the end of this section, we show the possible advantage of the "new" estimators and we give an example of the so--called diagonal case, where the formulae may be considerably simplified. In Section \ref{section: examples}, we consider two basic examples where our general results are applied: the wave equation (Example \ref{example 1}) and the plate equation (Example \ref{example 2}). The results are illustrated by some numerical simulations in Section \ref{section: implementation}.

If $U$ and $V$ are Hilbert spaces, then $\mathcal L(U, V)$, $\mathcal L_2 (U, V)$ and $\mathcal L_1 (U, V)$ denote the respective spaces of all linear bounded, Hilbert--Schmidt and trace class operators from $U$ to $V$. Also $\mathcal L(V)$ stands for $\mathcal L(V, V)$, etc.

\section{Preliminaries} \label{preliminaries}
Given separable Hilbert spaces $U$ and $V$, we consider the equation
\begin{align}
dX(t) &= \mathcal A X(t) \, dt + \Phi \, dB(t), \label{linearequation} \\
X(0) &= x_0, \notag
\end{align}
where $(B(t), t \geq 0)$ is a standard cylindrical Brownian motion on $U$, $\mathcal A: \text{Dom}(\mathcal A) \rightarrow V, \, \text{Dom}(\mathcal A) \subset V$, $\mathcal A$ is the infinitesimal generator of a strongly continuous semigroup $(S(t), t \geq 0)$ on $V$, $\Phi \in \mathcal L (U,V)$ and $x_0 \in V$ is a random variable. We assume that $\mathbb E \| x_0 \|_V^2 < \infty$ and that $x_0$ and $(B(t), t \geq 0)$ are stochastically independent.

We also consider the following two conditions:
\begin{itemize}
\item[(A1)] $\Phi \in \mathcal L_2 (U,V)$,
\item[(A2)] There exist constants $K > 0$ and $\rho > 0$ such that for all $t \geq 0$
$$
\| S(t) \|_{\mathcal L (V)} \leq Ke^{- \rho t}, \quad t \geq 0.
$$
\end{itemize}

The condition (A1) means that the perturbing noise is, in fact, a genuine $V$--valued Brownian motion and the condition (A2) is the exponential stability of the semigroup generated by $\mathcal A$.

\begin{proposition}
If (A1) is satisfied, then equation \eqref{linearequation} admits a mild solution
\begin{equation}
X^{x_0}(t) = S(t)x_0 + Z(t), \quad t \geq 0,
\end{equation}
where $(Z(t), t \geq 0)$ is the convolution integral
\begin{equation}
Z(t) = \int_0^t S(t-u) \Phi \, dB(u).
\end{equation}

The process $(Z(t), t \geq 0)$ is a $V$--continuous centered Gaussian process with covariance operator given by the formula
\begin{equation}
Q_t = \int_0^t S(u) \Phi \Phi^* S^* (u) \, du.
\end{equation}
\end{proposition}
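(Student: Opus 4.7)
The plan is to follow the standard Da Prato--Zabczyk treatment of linear stochastic evolution equations, verifying each claim in turn; since the result is classical, my outline focuses on where the work actually goes rather than on routine verifications.

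First I would establish that $Z(t)$ is well defined as a stochastic integral against the cylindrical Brownian motion $B$. For each fixed $t>0$ and each $u\in[0,t]$, the operator $S(t-u)\Phi$ is the composition of a bounded linear operator and a Hilbert--Schmidt operator (by (A1)), hence lies in $\mathcal L_2(U,V)$. Because $(S(t),t\geq 0)$ is strongly continuous, $\|S(s)\|_{\mathcal L(V)}$ is bounded on the compact interval $[0,t]$, so $\int_0^t \|S(t-u)\Phi\|_{\mathcal L_2(U,V)}^2\,du<\infty$. This is precisely the condition guaranteeing that the Itô integral with respect to the cylindrical Wiener process $B$ is well defined as a $V$-valued square-integrable random variable.

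Next I would address the distributional properties. Since $Z(t)$ is the stochastic integral of a deterministic operator-valued integrand against a Gaussian process, it is a centered Gaussian element of $V$. To identify its covariance, for arbitrary $h,k\in V$ I would apply the Itô isometry to $\langle Z(t),h\rangle$ and $\langle Z(t),k\rangle$:
\begin{equation}
\mathbb E\bigl[\langle Z(t),h\rangle\langle Z(t),k\rangle\bigr]
=\int_0^t \langle \Phi^*S^*(t-u)h,\Phi^*S^*(t-u)k\rangle_U\,du,
\end{equation}
which after rewriting and substituting $s=t-u$ gives $\langle Q_t h,k\rangle$ with $Q_t=\int_0^t S(s)\Phi\Phi^*S^*(s)\,ds$. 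The integrand is trace-class by (A1), so $Q_t\in\mathcal L_1(V)$.

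The main obstacle is the $V$-continuity of $t\mapsto Z(t)$: the stochastic convolution is not a martingale, so one cannot invoke Doob-type inequalities directly. Here I would use the factorization method: for $\alpha\in(0,1/2)$ one writes
\begin{equation}
Z(t)=\frac{\sin(\pi\alpha)}{\pi}\int_0^t (t-s)^{\alpha-1} S(t-s) Y_\alpha(s)\,ds, \qquad Y_\alpha(s)=\int_0^s (s-u)^{-\alpha} S(s-u)\Phi\,dB(u),
\end{equation}
verify via the Hilbert--Schmidt norm estimate and the Burkholder--Davis--Gundy-type bound in Hilbert space that $Y_\alpha\in L^p(0,T;V)$ for $p$ large enough, and finally use the fact that the deterministic convolution with $(t-s)^{\alpha-1}S(t-s)$ maps $L^p(0,T;V)$ continuously into $C([0,T];V)$ when $\alpha>1/p$. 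Finally the mild-solution identity $X^{x_0}(t)=S(t)x_0+Z(t)$ follows by definition, completing the proof. I would note that (A2) is not used in this proposition; it enters only when one passes to the invariant measure in the next step.
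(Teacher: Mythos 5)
Your outline is correct and is precisely the classical Da Prato--Zabczyk argument that the paper itself invokes by citation rather than reproving: Hilbert--Schmidt integrability of $S(t-u)\Phi$ for well-posedness, the It\^o isometry for the covariance, and the factorization method for path continuity. Your closing remark that (A2) is not needed here and only enters for the invariant measure is also accurate.
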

\begin{proof}
See \cite{dapratozabczyk}.
\end{proof}

\begin{proposition} \label{existence of invariant measure}
If (A1), (A2) are satisfied, then there is a unique invariant measure $\mu_{\infty} = N \left( 0, Q_{\infty} \right)$ for the equation \eqref{linearequation} and
$$
w^* - \lim_{t \rightarrow \infty} \mu_t^{x_0} = \mu_{\infty}
$$
for each initial condition $x_0 \in V$, where $\mu_t^{x_0} = \mathrm{Law} \, (X^{x_0}(t))$ and $\mathrm{Law} \, (\cdot)$ denotes the probability distribution.

The covariance operator $Q_{\infty}$ takes the form
\begin{equation} \label{qinfty}
Q_{\infty} = \int_0^{\infty} S(t) \Phi \Phi^* S^* (t) \, dt.
\end{equation}
\end{proposition}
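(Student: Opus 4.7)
The plan is to follow the standard Da Prato--Zabczyk template, since under the exponential stability assumption (A2) the linear equation \eqref{linearequation} is an Ornstein--Uhlenbeck type process whose Gaussian statistics can be computed explicitly.

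First I would verify that the operator $Q_\infty$ defined by \eqref{qinfty} makes sense as a symmetric nonnegative trace class operator on $V$. Since (A1) gives $\Phi \in \mathcal L_2(U,V)$, the operator $\Phi\Phi^*$ is trace class, and (A2) implies
\[
\operatorname{tr}\bigl(S(t)\Phi\Phi^* S^*(t)\bigr) \leq \|S(t)\|^2_{\mathcal L(V)} \operatorname{tr}(\Phi\Phi^*) \leq K^2 e^{-2\rho t}\operatorname{tr}(\Phi\Phi^*),
\]
so the integrand is integrable in $t$ in trace norm, and $Q_\infty \in \mathcal L_1(V)$. Consequently $\mu_\infty := N(0,Q_\infty)$ is a well-defined centered Gaussian measure on $V$.

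Next I would establish the weak* convergence. From the mild solution formula one has $X^{x_0}(t) = S(t)x_0 + Z(t)$, where $Z(t)$ is centered Gaussian with covariance $Q_t$. By (A2), $S(t)x_0 \to 0$ in $V$ for every $x_0$. Moreover, monotone convergence (or dominated convergence at the level of trace norms, using the exponential bound above) gives $Q_t \to Q_\infty$ in $\mathcal L_1(V)$, hence in particular in $\mathcal L(V)$. For Gaussian measures on a separable Hilbert space, convergence of means and of covariance operators in the trace norm implies weak convergence, so $\mu_t^{x_0} \Rightarrow \mu_\infty$ for every initial condition $x_0 \in V$.

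Then I would check invariance: starting from $X(0) \sim \mu_\infty$ independent of $B$, the mild solution at time $t$ is Gaussian with covariance $S(t)Q_\infty S^*(t) + Q_t$, and splitting the integral in \eqref{qinfty} at $t$ together with the change of variables $s = u - t$ gives
\[
S(t)Q_\infty S^*(t) = \int_t^\infty S(u)\Phi\Phi^* S^*(u)\,du = Q_\infty - Q_t,
\]
so $X(t) \sim \mu_\infty$, proving invariance. Uniqueness is then automatic: any invariant measure $\nu$ satisfies $\nu = \mu_t^\nu := \int \mu_t^{x_0}\,\nu(dx_0)$, and the weak convergence just proved forces $\nu = \mu_\infty$ by passing to the limit $t\to\infty$ under the integral (dominated convergence, using that the test functions are bounded continuous).

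The routine part is the trace-norm bookkeeping; the only genuinely delicate point I would expect is justifying passage to the limit in the uniqueness step, which requires care to ensure that the convergence $\mu_t^{x_0} \Rightarrow \mu_\infty$ is uniform enough in $x_0$ (on $\nu$-full sets) to integrate against $\nu$. This is handled by the Feller property of the Ornstein--Uhlenbeck semigroup, which follows from continuity of $x_0 \mapsto S(t)x_0$ and the independence of $Z(t)$ from $x_0$, and ultimately reduces back to \cite{dapratozabczyk}.
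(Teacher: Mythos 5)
Your proposal is correct and is precisely the standard Da Prato--Zabczyk argument that the paper itself invokes: its entire proof of this proposition is the citation ``See \cite{dapratozabczyk}.'' The only remark worth making is that your worry about uniformity in $x_0$ in the uniqueness step is unnecessary --- pointwise convergence $\int f\,d\mu_t^{x_0} \to \int f\,d\mu_\infty$ together with the uniform bound $\|f\|_\infty$ already justifies dominated convergence against $\nu$, so no Feller-type uniformity is needed.
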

\begin{proof}
See \cite{dapratozabczyk}.
\end{proof}

\section{The semigroup and covariance operator} \label{main results}
To interpret stochastic wave equation \eqref{example 0} rigorously, we rewrite it as a first order system in a standard way. Assume that $\{e_n, n \in \mathbb N \}$ is an orthonormal basis in $L^2(D)$ and the operator $A: \text{Dom}(A) \subset L^2(D) \rightarrow L^2(D)$ is such that
\begin{itemize}
\item[(i)] $Ae_n = - \alpha_n e_n$,
\item[(ii)] $\forall n \in \mathbb N \quad \alpha_n > 0$,
\item[(iii)] $\alpha_n \rightarrow \infty$ for $n \rightarrow \infty$.
\end{itemize}

These assumptions cover the case when the set $D \subset \mathbb R^d$ is open, bounded and the boundary $\partial D$ is sufficiently smooth, the operator $A = \Delta|_{\mathrm{Dom}(A)}$ and $\mathrm{Dom}(A) = H^2(D) \cap H^1_0 (D)$.

Next let us assume that $\Phi_1$ is a Hilbert--Schmidt operator on $L^2(D)$ such that $Q = \Phi_1 \Phi_1^*$ is strictly positive. Since $Q$ is a symmetric nuclear operator on $L^2(D)$ then there exists an orthonormal basis $\{e^{\prime}_n, n \in \mathbb N \}$ of $L^2(D)$ consisting of eigenvectors of $Q$, that is
\begin{itemize}
\item[(iv)] $Qe^{\prime}_n = \lambda_n e^{\prime}_n$,
\item[(v)] $\forall n \in \mathbb N \quad \lambda_n > 0$,
\item[(vi)] $\sum_{n=1}^{\infty} \lambda_n < \infty$.
\end{itemize}

Consider the Hilbert space $V = \text{Dom}((-A)^{\frac{1}{2}}) \times L^2(D)$ endowed with the inner product
\begin{align}
\left\langle \left( \begin{array}{r}
x_1\\
x_2
\end{array} \right), \left( \begin{array}{r}
y_1\\
y_2
\end{array} \right) \right\rangle_V &= \left\langle x_1, y_1 \right\rangle_{\text{Dom}((-A)^{\frac{1}{2}})} + \left\langle x_2, y_2 \right\rangle_{L^2(D)} \notag \\
&= \left\langle (-A)^{\frac{1}{2}}x_1, (-A)^{\frac{1}{2}}y_1 \right\rangle_{L^2(D)} + \left\langle x_2, y_2 \right\rangle_{L^2(D)},
\end{align}
for $(x_1, x_2)^{\top}, (y_1, y_2)^{\top} \in V$.

Also, consider the linear equation
\begin{align}
dX(t) &= \mathcal A X(t) \, dt + \Phi \, dB(t), \label{linear equation with parameters} \\
X(0) &= x_0 = \left( \begin{array}{c}
u_1\\
u_2
\end{array} \right), \notag
\end{align}
where the linear operator $\mathcal A: \text{Dom}(\mathcal A) = \text{Dom}(A) \times \text{Dom}((-A)^{\frac{1}{2}}) \rightarrow V$ is defined by
$$
\mathcal Ax =  \mathcal A \left( \begin{array}{r}
x_1\\
x_2
\end{array} \right) = \left( \begin{array}{cc}
0&I\\
bA&-2aI
\end{array} \right) \left( \begin{array}{r}
x_1\\
x_2
\end{array} \right), \quad \forall x = \left( \begin{array}{r}
x_1\\
x_2
\end{array} \right) \in \text{Dom}(\mathcal A),
$$
$a > 0, \, b > 0$ are unknown parameters (which are to be estimated), $u_1 \in \text{Dom}((-A)^{\frac{1}{2}}),$ $u_2 \in L^2(D)$, $x_0 = (u_1, u_2)^{\top} \in V$ satisfies $\mathbb E \| x_0 \|_V^2 < \infty$, where $\| \cdot \|_V := \sqrt{\left\langle \cdot, \cdot \right\rangle_V}$, and the linear operator $\Phi : U = V \rightarrow V$ is defined by
$$
\Phi = \left( \begin{array}{cc}
0&0\\
0&\Phi_1
\end{array} \right).
$$

With no loss of generality, we assume that the driving process in \eqref{linear equation with parameters} takes the form $(0, B(t))^{\top}$, where $(B(t), t \geq 0)$ is a standard cylindrical Brownian motion on $L^2(D)$.

Note that since the operator $\Phi_1$ is Hilbert--Schmidt in $L^2(D)$, the operator $\Phi$ is Hilbert--Schmidt in $V$.

The form of the eigenvalues of the operator $\mathcal A$ depends on whether $a^2 - b \alpha_n$ is negative, positive, or equal to zero. So in order to compute the form of the semigroup $(S(t), t \geq 0)$, we have to consider these three different cases, compute appropriate semigroups $(S_1(t), t \geq 0), \, (S_2(t), t \geq 0)$ and $(S_3(t), t \geq 0)$ and then combine them together to obtain the resulting formula (see Theorem \ref{form of S} below).

First let us divide $\mathbb N$ into three (disjoint) sets in this way: $\mathbb N = N_1 \cup N_2 \cup N_3$, where
\begin{align}
N_1 &= \left\{ n \in \mathbb N, \alpha_n > \frac{a^2}{b} \right\}, \\
N_2 &= \left\{ n \in \mathbb N, \alpha_n < \frac{a^2}{b} \right\}, \\
N_3 &= \left\{ n \in \mathbb N, \alpha_n = \frac{a^2}{b} \right\}.
\end{align}

Since $\alpha_n \rightarrow \infty$, the sets $N_2$ and $N_3$ are finite (or even empty) sets, while the set $N_1$ is infinite. Let us also write the space $V$ as a direct sum of three closed linear subspaces
\begin{equation}
V = V_1 \oplus V_2 \oplus V_3,
\end{equation}
where
\begin{equation}
V_i = \text{span} \, \{f_n, n \in N_i \} \times \text{span} \, \{e_n, n \in N_i \}, \quad i = 1, 2, 3.
\end{equation}
Note that the orthonormal basis of the space $\text{Dom}((-A)^{\frac{1}{2}})$ is $\{ f_n, n \in \mathbb N \}$, where $f_n = \frac{1}{\sqrt{\alpha_n}} e_n$.

\subsection{Case $\alpha_n > \frac{a^2}{b}$}
In the case $\alpha_n > \frac{a^2}{b}$, the eigenvalues $\{ l_n^{1,2}, n \in \mathbb N \}$ of the operator $\mathcal A$ are
\begin{equation}
l_n^{1,2} = -a \pm i \sqrt{b \alpha_n - a^2}
\end{equation}
and the operator $\mathcal A$ generates a $C_0$-semigroup on $V$, which is also exponentially stable (the real parts of the eigenvalues $l_n^{1,2}$ are negative). The form of the semigroup $(S_1(t), t \geq 0)$ is given in Lemma \ref{form of S1} below. Define the operator
\begin{equation}
P_1 x = \sum_{n \in N_1} \left\langle x, e_n \right\rangle_{L^2(D)} e_n,
\end{equation}
which is the operator of projection on the $\text{span} \, \{ e_n, n \in N_1 \}$ (that is $P_1 : L^2(D) \rightarrow \text{span} \, \{ e_n, n \in N_1 \}$). Furthermore define the operator $\beta : L^2(D) \rightarrow L^2(D)$ by $\beta = \left( -bA - a^2 I \right)^{\frac{1}{2}} P_1$, that is
\begin{equation}
\beta x = \sum_{n \in N_1} \sqrt{b \alpha_n - a^2} \left\langle x, e_n \right\rangle_{L^2(D)} e_n, \quad x \in \text{Dom}(\beta),
\end{equation}
where $\text{Dom}(\beta) = \{ x \in L^2(D), \sum_{n \in N_1} \left( b \alpha_n - a^2 \right) \left\langle x, e_n \right\rangle_{L^2(D)}^2 < \infty \} = \text{Dom}((-A)^{\frac{1}{2}})$.

Similarly define
\begin{align}
\beta^{-1} x &= \sum_{n \in N_1} \frac{1}{\sqrt{b \alpha_n - a^2}} \left\langle x, e_n \right\rangle_{L^2(D)} e_n, \label{betanaminusprvni} \\
\sin (\beta t) x &= \sum_{n \in N_1} \sin \left(\sqrt{b \alpha_n - a^2} \, t \right) \left\langle x, e_n \right\rangle_{L^2(D)} e_n, \\
\beta^{-1} \sin (\beta t) x &= \sum_{n \in N_1} \frac{\sin \left(\sqrt{b \alpha_n - a^2} \, t \right)}{\sqrt{b \alpha_n - a^2}} \left\langle x, e_n \right\rangle_{L^2(D)} e_n, \\
\cos (\beta t) x &= \sum_{n \in N_1} \cos \left(\sqrt{b \alpha_n - a^2} \, t \right) \left\langle x, e_n \right\rangle_{L^2(D)} e_n, \label{cosbetat}
\end{align}
where $x \in L^2(D)$.

Note that $\beta^{-1} = \left( -bA - a^2 I \right)^{-\frac{1}{2}}P_1$, so $\beta^{-1} \beta x = P_1 x$ for any $x \in \text{Dom}(\beta)$ and $\beta^{-1} \beta x = Ix$ for any $x \in \text{Dom}(\beta) \cap \text{span} \, \{ e_n, n \in N_1 \}$. Also note that the operator $\cos (\beta t)$ evaluated at time $t=0$ is $\cos (\beta t)|_{t=0} \, x = P_1 x$ for any $x \in L^2(D)$.

The form of the semigroup $(S_1(t), t \geq 0)$, for the coordinates from the set $N_1$, is described by the following Lemma.

\begin{lemma}\label{form of S1}
For all $x = (x_1, x_2)^{\top} \in V_1$ we have
$$
S_1(t) \left( \begin{array}{r}
x_1\\
x_2
\end{array} \right) = \left( \begin{array}{cc}
s_{11}(t)&s_{12}(t)\\
s_{21}(t)&s_{22}(t)
\end{array} \right) \left( \begin{array}{r}
x_1\\
x_2
\end{array} \right), \quad \forall t \geq 0,
$$
where
\begin{align}
s_{11}(t) &= e^{-at} \left( \cos (\beta t) + a \beta^{-1} \sin (\beta t) \right), \notag \\
s_{12}(t) &= e^{-at} \beta^{-1} \sin (\beta t), \notag \\
s_{21}(t) &= e^{-at} \left( - \beta - a^2 \beta^{-1} \right) \sin (\beta t), \notag \\
s_{22}(t) &= e^{-at} \left( \cos (\beta t) - a \beta^{-1} \sin (\beta t) \right) \notag.
\end{align}
\end{lemma}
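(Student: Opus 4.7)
The plan is to reduce $\mathcal{A}|_{V_1}$ to a countable direct sum of $2\times 2$ matrices by diagonalizing $A$. For each $n\in N_1$, the plane $\text{span}\{(f_n,0)^{\top},(0,e_n)^{\top}\}\subset V_1$ is invariant under $\mathcal{A}$, and in the coordinates $(c_n,d_n)$ defined by $x = (c_n f_n, d_n e_n)^{\top}$ the action of $\mathcal{A}$ is represented by the matrix
$$
M_n = \begin{pmatrix} 0 & \sqrt{\alpha_n} \\ -b\sqrt{\alpha_n} & -2a \end{pmatrix},
$$
whose eigenvalues are $l_n^{1,2} = -a \pm i\omega_n$ with $\omega_n := \sqrt{b\alpha_n - a^2} > 0$ since $n \in N_1$. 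The $\sqrt{\alpha_n}$ factors are produced by the normalization $f_n = e_n/\sqrt{\alpha_n}$ and are precisely what is absorbed into the powers of $\beta$ in the final formula.

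Next, the standard formula for the exponential of a $2\times 2$ matrix with a complex conjugate pair of eigenvalues yields
$$
e^{tM_n} = e^{-at}\!\left[\cos(\omega_n t)\, I + \frac{\sin(\omega_n t)}{\omega_n}\,(M_n + aI)\right],
$$
which is an explicit matrix whose entries involve $\cos(\omega_n t)$, $\sin(\omega_n t)/\omega_n$, $a$, $b$ and $\sqrt{\alpha_n}$. Defining $S_1(t)$ on $V_1$ as the direct sum of these blockwise exponentials and rewriting the four scalar families of coefficients as operators in the eigenbasis $\{e_n, n\in N_1\}$, each one is immediately recognized: $\cos(\omega_n t)$ and $\sin(\omega_n t)/\omega_n$ give $\cos(\beta t)$ and $\beta^{-1}\sin(\beta t)$ respectively; the $\sqrt{\alpha_n}$ in the $(1,2)$ slot is cancelled by $f_n = e_n/\sqrt{\alpha_n}$; and in the $(2,1)$ slot the factor $-b\sqrt{\alpha_n}/\omega_n$ combines with the $1/\sqrt{\alpha_n}$ coming from $f_n$ to give $-b\alpha_n/\omega_n = -(\omega_n^2+a^2)/\omega_n$, which is the $n$-th eigenvalue of $-\beta - a^2\beta^{-1}$. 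This reproduces exactly the claimed expressions for $s_{11},s_{12},s_{21},s_{22}$.

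Finally, I would check that the operator family $(S_1(t), t\geq 0)$ so defined is a strongly continuous semigroup on $V_1$ whose generator is $\mathcal{A}|_{V_1}$. The semigroup property and strong continuity follow block by block from $e^{(s+t)M_n} = e^{tM_n}e^{sM_n}$, together with a uniform bound $\|e^{tM_n}\| \leq C e^{-at}$ (independent of $n$), which guarantees that the direct sum defines a bounded operator on $V_1$. Equivalently, one can differentiate the closed-form expressions and verify $\frac{d}{dt} S_1(t) = \mathcal{A} S_1(t)$ on $\text{Dom}(\mathcal{A})\cap V_1$ using the eigenbasis identities $\beta^2 = -bA - a^2 I$ and $bA\beta^{-1} = -\beta - a^2\beta^{-1}$ on $\text{span}\{e_n, n\in N_1\}$. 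The main obstacle is the bookkeeping with the normalization $f_n = e_n/\sqrt{\alpha_n}$: because the first coordinate of $V_1$ lies in $\text{Dom}((-A)^{1/2})$ and not in $L^2(D)$, one has to track carefully which $\sqrt{\alpha_n}$ factors cancel into $\beta$ and which survive; apart from that, the argument is a routine matrix exponentiation.
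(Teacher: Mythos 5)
Your proof is correct, but it takes a different route from the paper. The paper simply takes the claimed formula as given and verifies the two defining properties of the semigroup generated by $\mathcal A|_{V_1}$, namely $S_1(0)=I$ on $V_1$ (which reduces to the observation that the operators $\cos(\beta t)$ and $\beta^{-1}\sin(\beta t)$ evaluate at $t=0$ to $P_1$ and $0$) and $\frac{d}{dt}S_1(t)x=\mathcal A S_1(t)x$, the latter dismissed as a ``straightforward computation.'' You instead \emph{derive} the formula: you decompose $V_1$ into the $\mathcal A$-invariant planes $\mathrm{span}\{(f_n,0)^{\top},(0,e_n)^{\top}\}$, represent $\mathcal A$ there by the matrix $M_n$ (your matrix and its eigenvalues $-a\pm i\omega_n$ are correct), and exponentiate via $e^{tM_n}=e^{-at}[\cos(\omega_n t)I+\omega_n^{-1}\sin(\omega_n t)(M_n+aI)]$, which follows from $(M_n+aI)^2=-\omega_n^2 I$; your bookkeeping of the $\sqrt{\alpha_n}$ factors arising from $f_n=e_n/\sqrt{\alpha_n}$, including the identification $-b\alpha_n/\omega_n=-(\omega_n^2+a^2)/\omega_n$ in the $(2,1)$ entry, reproduces $s_{11},\dots,s_{22}$ exactly. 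Your approach buys two things the paper leaves implicit: it explains where the formula comes from, and it supplies the uniform bound $\|e^{tM_n}\|\le Ce^{-at}$ needed for the direct sum to define a bounded operator on all of $V_1$ (the constant is indeed uniform because $\inf_{n\in N_1}(b\alpha_n-a^2)>0$, as only finitely many $\alpha_n$ lie below any fixed level, so $\omega_n^{-1}\|M_n+aI\|$ stays bounded). The paper's verification is shorter but presupposes the answer. Your closing remark that one can equivalently differentiate the closed-form expressions and use $\beta^2=-bA-a^2I$ on $\mathrm{span}\{e_n,\,n\in N_1\}$ is precisely the computation the paper has in mind.
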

\begin{proof}
It is sufficient to show that
\begin{itemize}
\item[(i)] $$
S_1(0) = \left( \begin{array}{cc}
I&0\\
0&I
\end{array} \right),
$$
\item[(ii)] $$
\frac{d}{dt} S_1(t) x = \mathcal A S_1(t)x, \quad \forall x \in \text{Dom}(\mathcal A) \cap V_1, \quad \forall t \geq 0.
$$
\end{itemize}

As for (i), it is easy to see that
$$
S_1(0) \left( \begin{array}{r}
x_1\\
x_2
\end{array} \right) = \left( \begin{array}{cc}
P_1&0\\
0&P_1
\end{array} \right) \left( \begin{array}{r}
x_1\\
x_2
\end{array} \right),
$$
which is the identity operator for $x_1 \in \text{span} \, \{f_n, n \in N_1 \}$, $x_2 \in \text{span}\,  \{e_n, n \in N_1 \}$.

(ii) may be verified by straightforward computation.

\end{proof}

The adjoint operator of $(S_1(t), t \geq 0)$ is introduced in Lemma \ref{form of S1 adjoint}.
\begin{lemma}\label{form of S1 adjoint}
For all $x = (x_1, x_2)^{\top} \in V_1$ we have
$$
S_1^*(t) \left( \begin{array}{r}
x_1\\
x_2
\end{array} \right) = \left( \begin{array}{cc}
r_{11}(t)&r_{12}(t)\\
r_{21}(t)&r_{22}(t)
\end{array} \right) \left( \begin{array}{r}
x_1\\
x_2
\end{array} \right), \quad \forall t \geq 0,
$$
where
\begin{align}
r_{11}(t) &= e^{-at} (-A)^{- \frac{1}{2}} \left( \cos (\beta t) + a \beta^{-1} \sin (\beta t) \right)(-A)^{\frac{1}{2}}, \notag \\
r_{12}(t) &= e^{-at} (-A)^{- \frac{1}{2}} \left( - \beta - a^2 \beta^{-1} \right) \sin (\beta t)(-A)^{- \frac{1}{2}}, \notag \\
r_{21}(t) &= e^{-at} (-A)^{\frac{1}{2}} \beta^{-1} \sin (\beta t)(-A)^{\frac{1}{2}}, \notag \\
r_{22}(t) &= e^{-at} \left( \cos (\beta t) - a \beta^{-1} \sin (\beta t) \right) \notag.
\end{align}
\end{lemma}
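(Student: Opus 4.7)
The plan is to derive $S_1^*(t)$ directly from the defining identity $\langle S_1(t)x,y\rangle_V=\langle x,S_1^*(t)y\rangle_V$ by exploiting the fact that, on $V_1$, every operator appearing in the block matrix of $S_1(t)$ (namely $\beta$, $\beta^{-1}$, $\sin(\beta t)$, $\cos(\beta t)$ and multiplication by $e^{-at}$) is self-adjoint on $L^2(D)$ and commutes with $(-A)^{1/2}$, since all of these operators are diagonal in the basis $\{e_n,\,n\in N_1\}$.

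First I would write out $\langle S_1(t)x,y\rangle_V$ for $x=(x_1,x_2)^\top$, $y=(y_1,y_2)^\top\in V_1$ using the definition of the inner product, obtaining
\begin{equation}
\bigl\langle (-A)^{1/2}(s_{11}x_1+s_{12}x_2),(-A)^{1/2}y_1\bigr\rangle_{L^2(D)}+\bigl\langle s_{21}x_1+s_{22}x_2,y_2\bigr\rangle_{L^2(D)},
\end{equation}
and similarly expand $\langle x,S_1^*(t)y\rangle_V$. Then I would match the four bilinear terms (by the polarization-type identity, i.e.\ by varying $x_1,x_2,y_1,y_2$ independently in the respective subspaces).

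For each term, I would commute $(-A)^{1/2}$ past the self-adjoint factors of $s_{ij}$, then use self-adjointness on $L^2(D)$ to move the operator to the other side of the inner product, and finally re-express the result back in the $V$-inner product form. For the $(x_1,y_1)$-term this gives $r_{11}=(-A)^{-1/2}s_{11}(-A)^{1/2}$; for the $(x_2,y_2)$-term the factors of $(-A)^{1/2}$ are absent, so $r_{22}=s_{22}$; for the cross terms the $(-A)^{1/2}$'s distribute asymmetrically, producing the prescribed $r_{12}=(-A)^{-1/2}s_{21}^{L^2*}(-A)^{-1/2}$ and $r_{21}=(-A)^{1/2}s_{12}^{L^2*}(-A)^{1/2}$, which coincide with the stated formulas because $s_{21}$ and $s_{12}$ are self-adjoint on $L^2(D)\cap V_1$.

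The main technical care (and the only real obstacle) is bookkeeping for domains and for the finite-dimensional ``projection'' character of everything on $V_1$: one must note that $(-A)^{\pm 1/2}$ leaves $\mathrm{span}\{e_n,n\in N_1\}$ invariant and commutes there with $\beta$, $\beta^{-1}$, $\sin(\beta t)$, $\cos(\beta t)$, so the compositions $(-A)^{-1/2}(\cdots)(-A)^{1/2}$ and the raw $L^2$-operators coincide on $V_1$. Once this commutation is recorded, the identification of the $r_{ij}$ is a direct reading-off from the matched bilinear forms.
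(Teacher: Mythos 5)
Your proposal is correct and follows the same route as the paper, which simply asserts that one verifies the defining identity $\left\langle S_1(t)x, y \right\rangle_V = \left\langle x, S_1^*(t)y \right\rangle_V$ on $V_1$; you have merely carried out the verification the paper leaves implicit, correctly using that the diagonal operators $\beta^{\pm 1}$, $\sin(\beta t)$, $\cos(\beta t)$ are self-adjoint on $L^2(D)$ and commute with $(-A)^{\pm\frac{1}{2}}$ on $\mathrm{span}\,\{e_n, n \in N_1\}$. The resulting identifications $r_{11}=(-A)^{-\frac{1}{2}}s_{11}(-A)^{\frac{1}{2}}$, $r_{12}=(-A)^{-\frac{1}{2}}s_{21}(-A)^{-\frac{1}{2}}$, $r_{21}=(-A)^{\frac{1}{2}}s_{12}(-A)^{\frac{1}{2}}$, $r_{22}=s_{22}$ match the stated formulas exactly.
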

\begin{proof}
It is easy to verify that
$$
\left\langle S_1(t)x, y \right\rangle_V = \left\langle x, S_1^*(t)y \right\rangle_V, \quad \forall x, y \in V_1, \quad \forall t \geq 0.
$$
\end{proof}

Using Lemma \ref{form of S1 adjoint}, it is possible to compute the integrand in \eqref{qinfty} and consequently to obtain the exact formula for the covariance operator $Q_{\infty}^{(a,b)}$.

\begin{lemma}\label{form of Q infinity}
The covariance operator $Q_{\infty}^{(a,b)}$ takes the form
\begin{align}
Q_{\infty}^{(a,b)}\left( \begin{array}{c}
x_1\\
x_2
\end{array} \right) &= \sum_{n \in N_1} \sum_{k \in N_1} \frac{\left\langle Qe_n, e_k \right\rangle_{L^2(D)}}{b^2 (\alpha_n - \alpha_k)^2 + 8a^2 b (\alpha_n + \alpha_k)} \times \\
&\left( \begin{array}{c}
4a \alpha_n \left\langle x_1, e_n \right\rangle_{L^2(D)} e_k + b(\alpha_k - \alpha_n) \left\langle x_2, e_n \right\rangle_{L^2(D)} e_k\\
b \alpha_n(\alpha_n - \alpha_k) \left\langle x_1, e_n \right\rangle_{L^2(D)} e_k + 2ab (\alpha_n + \alpha_k) \left\langle x_2, e_n \right\rangle_{L^2(D)} e_k
\end{array} \right), \label{Q infinity for N1}
\end{align}
for any $(x_1, x_2)^{\top} \in V_1.$
\end{lemma}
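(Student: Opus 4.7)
The plan is to substitute directly into the formula $Q_\infty^{(a,b)} = \int_0^\infty S(t)\Phi\Phi^* S^*(t)\,dt$ from Proposition~\ref{existence of invariant measure}. Restricted to the invariant subspace $V_1$, the integrand is $S_1(t)\Phi\Phi^* S_1^*(t)$; since
\begin{equation*}
\Phi\Phi^* = \begin{pmatrix} 0 & 0 \\ 0 & Q \end{pmatrix},
\end{equation*}
only the entries $s_{12},s_{22}$ of $S_1(t)$ from Lemma~\ref{form of S1} and $r_{21},r_{22}$ of $S_1^*(t)$ from Lemma~\ref{form of S1 adjoint} contribute, so for any $(x_1,x_2)^\top \in V_1$,
\begin{equation*}
S_1(t)\Phi\Phi^* S_1^*(t)\begin{pmatrix} x_1 \\ x_2 \end{pmatrix} = \begin{pmatrix} s_{12}(t)\,Q(r_{21}(t)x_1 + r_{22}(t)x_2) \\ s_{22}(t)\,Q(r_{21}(t)x_1 + r_{22}(t)x_2) \end{pmatrix}.
\end{equation*}
I would then expand $x_1$, $x_2$ and each $Qe_n$ in the basis $\{e_n\}_{n\in N_1}$, obtaining a double sum indexed by $(n,k)\in N_1\times N_1$ with weights $\langle Qe_n,e_k\rangle_{L^2(D)}\langle x_j,e_n\rangle_{L^2(D)}$.

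Writing $p_n := \sqrt{b\alpha_n - a^2}$ and using the spectral formulas \eqref{betanaminusprvni}--\eqref{cosbetat} together with $(-A)^{1/2}e_n = \sqrt{\alpha_n}\,e_n$, each of the four operators acts diagonally on the relevant eigenbasis:
\begin{align*}
s_{12}(t)e_n &= e^{-at}\,\tfrac{\sin(p_n t)}{p_n}e_n, & r_{21}(t)e_n &= e^{-at}\,\tfrac{\alpha_n\sin(p_n t)}{p_n}e_n,\\
s_{22}(t)e_n &= r_{22}(t)e_n = e^{-at}\bigl(\cos(p_n t) - \tfrac{a\sin(p_n t)}{p_n}\bigr)e_n.
\end{align*}
Absolute convergence of the double sum, uniform on compact time intervals (from $Q\in\mathcal L_1(L^2(D))$ and the exponential factor $e^{-2at}$), lets Fubini swap $\sum_{n,k}$ with $\int_0^\infty\,dt$. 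Each block entry then reduces to a linear combination of the four integrals $\int_0^\infty e^{-2at}\sigma_1(p_n t)\sigma_2(p_k t)\,dt$ with $\sigma_i\in\{\sin,\cos\}$, which I would evaluate via product-to-sum identities and the elementary Laplace integrals $\int_0^\infty e^{-2at}\cos(rt)\,dt = 2a/(4a^2+r^2)$ and $\int_0^\infty e^{-2at}\sin(rt)\,dt = r/(4a^2+r^2)$. The common denominator that emerges is
\begin{equation*}
\bigl[4a^2+(p_n-p_k)^2\bigr]\bigl[4a^2+(p_n+p_k)^2\bigr] = b^2(\alpha_n-\alpha_k)^2 + 8a^2 b(\alpha_n+\alpha_k),
\end{equation*}
after substituting $p_n^2 = b\alpha_n-a^2$ and $p_k^2 = b\alpha_k-a^2$, which is exactly the denominator appearing in \eqref{Q infinity for N1}.

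The main obstacle is bookkeeping rather than insight: one has to verify that the four block numerators collapse to the compact forms stated. For the top-left block only the $\sin\sin$ integral appears, giving $\frac{\alpha_n}{p_np_k}\cdot\frac{4ap_np_k}{D} = \frac{4a\alpha_n}{D}$, where $D$ denotes the common denominator above. For the top-right block, the factor $\cos(p_n t) - \tfrac{a}{p_n}\sin(p_n t)$ from $r_{22}$ multiplied by $\sin(p_k t)/p_k$ from $s_{12}$ produces the combination $\frac{1}{p_k}\bigl(I_1 - \frac{a}{p_n}I_2\bigr)$, in which the $4a^2$ contributions cancel and only $b(\alpha_k-\alpha_n)/D$ survives. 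Analogous reductions of the bottom-left and bottom-right blocks, using the extra factor $\alpha_n$ from $r_{21}$ and the symmetry of $s_{22}$ and $r_{22}$, yield $b\alpha_n(\alpha_n-\alpha_k)/D$ and $2ab(\alpha_n+\alpha_k)/D$ respectively, completing \eqref{Q infinity for N1}.
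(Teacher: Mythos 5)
Your proposal is correct and follows essentially the same route as the paper: insert the explicit forms of $S_1(t)$ and $S_1^*(t)$ into $\int_0^\infty S_1(t)\Phi\Phi^*S_1^*(t)\,dt$, observe that only the $s_{12},s_{22},r_{21},r_{22}$ entries survive because $\Phi\Phi^*=\operatorname{diag}(0,Q)$, expand in the eigenbasis to get a double sum over $N_1\times N_1$ weighted by $\langle Qe_n,e_k\rangle_{L^2(D)}$, and evaluate the resulting scalar integrals $\int_0^\infty e^{-2at}\sigma_1(p_nt)\sigma_2(p_kt)\,dt$, whose common denominator simplifies to $b^2(\alpha_n-\alpha_k)^2+8a^2b(\alpha_n+\alpha_k)$. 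The only difference is cosmetic: the paper quotes the four trigonometric Laplace integrals as ready-made facts, whereas you derive them via product-to-sum identities, and you additionally make the Fubini interchange explicit.
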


\begin{proof}
The integrand in \eqref{qinfty} can be computed as follows
\begin{equation}
Q_{\infty}^{(a,b)} = \int_0^{\infty} S_1(t) \Phi \Phi^* S_1^*(t) \, dt = \int_0^{\infty} \left( \begin{array}{cc}
q_{11}(t)&q_{12}(t)\\
q_{21}(t)&q_{22}(t)
\end{array} \right) \, dt,
\end{equation}
where
\begin{align}
q_{11}(t) &= e^{-2at} \beta^{-1} \sin (\beta t) Q (-A)^{\frac{1}{2}} \beta^{-1} \sin (\beta t) (-A)^{\frac{1}{2}}, \\
q_{12}(t) &= e^{-2at} \beta^{-1} \sin (\beta t) Q \left( \cos (\beta t) - a \beta^{-1} \sin (\beta t) \right), \label{q12} \\
q_{21}(t) &= e^{-2at} \left( \cos (\beta t) - a \beta^{-1} \sin (\beta t) \right) Q (-A)^{\frac{1}{2}} \beta^{-1} \sin (\beta t) (-A)^{\frac{1}{2}}, \label{q21} \\
q_{22}(t) &= e^{-2at} \left( \cos (\beta t) - a \beta^{-1} \sin (\beta t) \right) Q \left( \cos (\beta t) - a \beta^{-1} \sin (\beta t) \right).
\end{align}

We need to evaluate the integrals of $q_{11}(t)$, $q_{12}(t)$, $q_{21}(t)$ and $q_{22}(t)$. For every $x = (x_1, x_2)^{\top} \in V_1$, we have that
\begin{align}
q_{11}(t) x_1 &= e^{-2at} \beta^{-1} \sin (\beta t) Q \sum_{n \in N_1} \alpha_n \frac{\sin \left(\sqrt{b \alpha_n - a^2} \, t \right)}{\sqrt{b \alpha_n - a^2}} \left\langle x_1, e_n \right\rangle_{L^2(D)} e_n \notag \\
&= e^{-2at} \beta^{-1} \sin (\beta t) \sum_{n \in N_1} \sum_{k=1}^{\infty} \alpha_n \frac{\sin \left(\sqrt{b \alpha_n - a^2} \, t \right)}{\sqrt{b \alpha_n - a^2}} \left\langle Qe_n, e_k \right\rangle_{L^2(D)} \left\langle x_1, e_n \right\rangle_{L^2(D)} e_k \notag \\
&= e^{-2at} \sum_{n \in N_1} \sum_{k \in N_1} \alpha_n \frac{\sin \left(\sqrt{b \alpha_n - a^2} \, t \right)}{\sqrt{b \alpha_n - a^2}} \frac{\sin \left(\sqrt{b \alpha_k - a^2} \, t \right)}{\sqrt{b \alpha_k - a^2}} \times \notag \\
&\phantom{=} \left\langle Qe_n, e_k \right\rangle_{L^2(D)} \left\langle x_1, e_n \right\rangle_{L^2(D)} e_k. \notag
\end{align}

Now we use the fact that
$$
\int_0^{\infty} e^{-2at} \sin \left(\sqrt{b \alpha_n - a^2} \, t \right) \sin \left(\sqrt{b \alpha_k - a^2} \, t \right) \, dt = \frac{4a \sqrt{b \alpha_n - a^2} \sqrt{b \alpha_k - a^2}}{b^2 (\alpha_n - \alpha_k)^2 + 8a^2 b (\alpha_n + \alpha_k)}.
$$

Hence by integrating the formula for $q_{11}(t) x_1$ over $t$ from zero to infinity, we will arrive at
$$
\left( \int_0^{\infty} q_{11}(t) \, dt \right) x_1 = \sum_{n \in N_1} \sum_{k \in N_1} \frac{4a \alpha_n \left\langle Qe_n, e_k \right\rangle_{L^2(D)}}{b^2 (\alpha_n - \alpha_k)^2 + 8a^2 b (\alpha_n + \alpha_k)} \left\langle x_1, e_n \right\rangle_{L^2(D)} e_k.
$$

As for $q_{12}(t)$,
\begin{align}
q_{12}(t) x_2 &= e^{-2at} \beta^{-1} \sin (\beta t) Q \sum_{n \in N_1} \left( \cos \left( \sqrt{b \alpha_n - a^2} \, t \right) - a \frac{\sin \left(\sqrt{b \alpha_n - a^2} \, t \right)}{\sqrt{b \alpha_n - a^2}} \right) \times \notag \\
&\phantom{=} \left\langle x_2, e_n \right\rangle_{L^2(D)} e_n \notag \\
&= e^{-2at} \beta^{-1} \sin (\beta t) \sum_{n \in N_1} \sum_{k=1}^{\infty} \left( \cos \left( \sqrt{b \alpha_n - a^2} \, t \right) - a \frac{\sin \left(\sqrt{b \alpha_n - a^2} \, t \right)}{\sqrt{b \alpha_n - a^2}} \right) \times \notag \\
&\phantom{=} \left\langle Qe_n, e_k \right\rangle_{L^2(D)} \left\langle x_2, e_n \right\rangle_{L^2(D)} e_k \notag \\
&= e^{-2at} \sum_{n \in N_1} \sum_{k \in N_1} \left( \cos \left( \sqrt{b \alpha_n - a^2} \, t \right) - a \frac{\sin \left(\sqrt{b \alpha_n - a^2} \, t \right)}{\sqrt{b \alpha_n - a^2}} \right) \times \notag \\
&\phantom{=} \frac{\sin \left(\sqrt{b \alpha_k - a^2} \, t \right)}{\sqrt{b \alpha_k - a^2}} \left\langle Qe_n, e_k \right\rangle_{L^2(D)} \left\langle x_2, e_n \right\rangle_{L^2(D)} e_k. \notag
\end{align}

Now we use the fact that
\begin{align}
&\int_0^{\infty} e^{-2at} \sin \left(\sqrt{b \alpha_k - a^2} \, t \right) \left( \cos \left( \sqrt{b \alpha_n - a^2} \, t \right) - a \frac{\sin \left(\sqrt{b \alpha_n - a^2} \, t \right)}{\sqrt{b \alpha_n - a^2}} \right) \, dt \notag \\
&\phantom{=}= \frac{b (\alpha_k - \alpha_n) \sqrt{b \alpha_k - a^2}}{b^2 (\alpha_n - \alpha_k)^2 + 8a^2 b (\alpha_n + \alpha_k)}. \notag
\end{align}

Hence by integrating the formula for $q_{12}(t) x_2$ over $t$ from zero to infinity, we obtain
$$
\left( \int_0^{\infty} q_{12}(t) \, dt \right) x_2 = \sum_{n \in N_1} \sum_{k \in N_1} \frac{b (\alpha_k - \alpha_n) \left\langle Qe_n, e_k \right\rangle_{L^2(D)}}{b^2 (\alpha_n - \alpha_k)^2 + 8a^2 b (\alpha_n + \alpha_k)} \left\langle x_2, e_n \right\rangle_{L^2(D)} e_k.
$$

The expression for $q_{21}(t)x_1$ is very similar to the previous one,
\begin{align}
q_{21}(t) x_1 &= e^{-2at} \left( \cos (\beta t) - a \beta^{-1} \sin (\beta t) \right) Q \sum_{n \in N_1} \alpha_n \frac{\sin \left(\sqrt{b \alpha_n - a^2} \, t \right)}{\sqrt{b \alpha_n - a^2}} \left\langle x_1, e_n \right\rangle_{L^2(D)} e_n \notag \\
&= e^{-2at} \left( \cos (\beta t) - a \beta^{-1} \sin (\beta t) \right) \sum_{n \in N_1} \sum_{k=1}^{\infty} \alpha_n \frac{\sin \left(\sqrt{b \alpha_n - a^2} \, t \right)}{\sqrt{b \alpha_n - a^2}} \times \notag \\
&\phantom{=} \left\langle Qe_n, e_k \right\rangle_{L^2(D)} \left\langle x_1, e_n \right\rangle_{L^2(D)} e_k \notag \\
&= e^{-2at} \sum_{n \in N_1} \sum_{k \in N_1} \alpha_n \left( \cos \left( \sqrt{b \alpha_k - a^2} \, t \right) - a \frac{\sin \left(\sqrt{b \alpha_k - a^2} \, t \right)}{\sqrt{b \alpha_k - a^2}} \right) \times \notag \\
&\phantom{=} \frac{\sin \left(\sqrt{b \alpha_n - a^2} \, t \right)}{\sqrt{b \alpha_n - a^2}} \left\langle Qe_n, e_k \right\rangle_{L^2(D)} \left\langle x_2, e_n \right\rangle_{L^2(D)} e_k. \notag
\end{align}

Here the integration over $t$ from zero to infinity yields the same result as before with indicies $n$ and $k$ reversed (note that the denominator in the resulting formula will remain the same). Hence we obtain that
$$
\left( \int_0^{\infty} q_{21}(t) \, dt \right) x_1 = \sum_{n \in N_1} \sum_{k \in N_1} \frac{b \alpha_n (\alpha_n - \alpha_k) \left\langle Qe_n, e_k \right\rangle_{L^2(D)}}{b^2 (\alpha_n - \alpha_k)^2 + 8a^2 b (\alpha_n + \alpha_k)} \left\langle x_1, e_n \right\rangle_{L^2(D)} e_k.
$$

In a similar manner, we have that
\begin{align}
q_{22}(t) x_2 &= e^{-2at} \sum_{n \in N_1} \sum_{k \in N_1} \left( \cos \left( \sqrt{b \alpha_k - a^2} \, t \right) - a \frac{\sin \left(\sqrt{b \alpha_k - a^2} \, t \right)}{\sqrt{b \alpha_k - a^2}} \right) \times \notag \\
&\phantom{=} \left( \cos \left( \sqrt{b \alpha_n - a^2} \, t \right) - a \frac{\sin \left(\sqrt{b \alpha_n - a^2} \, t \right)}{\sqrt{b \alpha_n - a^2}} \right) \left\langle Qe_n, e_k \right\rangle_{L^2(D)} \left\langle x_2, e_n \right\rangle_{L^2(D)} e_k \notag
\end{align}

and by evaluating the appropriate integral, we arrive at
$$
\left( \int_0^{\infty} q_{22}(t) \, dt \right) x_2 = \sum_{n \in N_1} \sum_{k \in N_1} \frac{2ab (\alpha_n + \alpha_k) \left\langle Qe_n, e_k \right\rangle_{L^2(D)}}{b^2 (\alpha_n - \alpha_k)^2 + 8a^2 b (\alpha_n + \alpha_k)} \left\langle x_2, e_n \right\rangle_{L^2(D)} e_k.
$$

These results may be summarized by the formula \eqref{Q infinity for N1}, which completes the proof.
\end{proof}

\subsection{Case $\alpha_n < \frac{a^2}{b}$}
In the case $\alpha_n < \frac{a^2}{b}$, the eigenvalues $\{ l_n^{1,2}, n \in \mathbb N \}$ of the operator $\mathcal A$ are
\begin{align}
l_n^1 &= -a + \sqrt{a^2 - b \alpha_n}, \\
l_n^2 &= -a - \sqrt{a^2 - b \alpha_n}
\end{align}
and the operator $\mathcal A$ generates a $C_0$-semigroup on $V$, which is also exponentially stable (the eigenvalues $l_n^1$ and $l_n^2$ are negative). The form of the semigroup $(S_2(t), t \geq 0)$ is given in Lemma \ref{form of S2}, but let us again introduce some operators, which will be needed further.

First define the operator $P_2$
\begin{equation}
P_2 x = \sum_{n \in N_2} \left\langle x, e_n \right\rangle_{L^2(D)} e_n,
\end{equation}
which is the operator of projection on the $\text{span} \, \{ e_n, n \in N_2 \}$ (that is $P_2 : L^2(D) \rightarrow \text{span} \, \{ e_n, n \in N_2 \}$). Furthermore define the operator $\gamma : L^2(D) \rightarrow L^2(D)$ by $\gamma = \left( a^2 I + bA \right)^{\frac{1}{2}} P_2$, that is
\begin{equation}
\gamma x = \sum_{n \in N_2} \sqrt{a^2 - b \alpha_n} \left\langle x, e_n \right\rangle_{L^2(D)} e_n,
\end{equation}
where $x \in L^2(D)$. (Since the sum over the set $N_2$ is finite, it is possible to define the operator $\gamma$ on the whole space $L^2(D)$.)

Similarly, define
\begin{align}
\gamma^{-1}x &= \sum_{n \in N_2} \frac{1}{\sqrt{a^2 - b \alpha_n}} \left\langle x, e_n \right\rangle_{L^2(D)} e_n, \\
L_1 x &= \left( -aP_2 + \gamma \right) x, \\
L_2 x &= \left( -aP_2 - \gamma \right) x, \\
e^{L_1 t} x &= \sum_{n \in N_2} e^{l_n^1 t} \left\langle x, e_n \right\rangle_{L^2(D)} e_n, \\
e^{L_2 t} x &= \sum_{n \in N_2} e^{l_n^2 t} \left\langle x, e_n \right\rangle_{L^2(D)} e_n,
\end{align}
where $x \in L^2(D)$.

Note that $\gamma^{-1} = \left( a^2 I + bA \right)^{-\frac{1}{2}}P_2$, so $\gamma^{-1} \gamma x = P_2 x$ for any $x \in L^2(D)$ and $\gamma^{-1} \gamma x = Ix$ for any $x \in \text{span} \, \{ e_n, n \in N_2 \}$. Also note that the following properties hold true
\begin{align}
L_1 - L_2 &= 2 \gamma, \\
L_1 L_2 &= - bAP_2 \, (= L_2 L_1), \label{L1 L2}
\end{align}
so the operators $L_1$ and $L_2$ commute. The last remark is that the operator $e^{L_1 t}$ evaluated at time $t=0$ is $e^{L_1 t}|_{t=0} \, x = P_2 x$ for any $x \in L^2(D)$. (The operator $e^{L_2 t}$ has indeed the same property.)

The form of the semigroup $(S_2(t), t \geq 0)$, for the coordinates from the set $N_2$, is described by the following Lemma.

\begin{lemma}\label{form of S2}
For all $x = (x_1, x_2)^{\top} \in V_2$ we have
$$
S_2(t) \left( \begin{array}{r}
x_1\\
x_2
\end{array} \right) = \left( \begin{array}{cc}
\frac{1}{2} \gamma^{-1} \left( - L_2 e^{L_1 t} + L_1 e^{L_2 t} \right) & \frac{1}{2} \gamma^{-1} \left( e^{L_1 t} - e^{L_2 t} \right) \\
\frac{1}{2} \gamma^{-1} \left( - L_1 L_2 e^{L_1 t} + L_1 L_2 e^{L_2 t} \right) & \frac{1}{2} \gamma^{-1} \left( L_1 e^{L_1 t} - L_2 e^{L_2 t} \right)
\end{array} \right) \left( \begin{array}{r}
x_1\\
x_2
\end{array} \right),
$$
for all $t \geq 0$.
\end{lemma}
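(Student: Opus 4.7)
The plan is to follow the strategy used for Lemma \ref{form of S1}: verify (i) $S_2(0) = I$ on $V_2$ and (ii) $\frac{d}{dt} S_2(t) x = \mathcal{A} S_2(t) x$ for every $x \in \mathrm{Dom}(\mathcal A) \cap V_2$ and every $t \geq 0$. The essential simplification compared with the case $n \in N_1$ is that $N_2$ is finite, so $\gamma$, $\gamma^{-1}$, $L_1$, $L_2$, $e^{L_1 t}$, $e^{L_2 t}$ are all \emph{bounded} operators on $L^2(D)$; termwise differentiation in $t$ is immediate and there are no domain caveats to chase.

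For (i), I would evaluate each of the four blocks at $t=0$ using $e^{L_1 \cdot 0} = e^{L_2 \cdot 0} = P_2$. The two off-diagonal entries each contain a factor of the form $e^{L_1 t} - e^{L_2 t}$ and hence vanish at $t=0$. Both diagonal entries reduce to $\tfrac{1}{2}\gamma^{-1}(L_1 - L_2)P_2 = \gamma^{-1}\gamma P_2 = P_2$, which is the identity on $\mathrm{span}\{e_n, n \in N_2\} = \mathrm{span}\{f_n, n \in N_2\}$ (the two spans coincide as sets since $f_n = \alpha_n^{-1/2} e_n$).

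For (ii), denote the four entries of the stated matrix by $s_{ij}^{(2)}(t)$. The identity $\frac{d}{dt} S_2(t) = \mathcal{A} S_2(t)$ is equivalent to $s_{21}^{(2)} = \frac{d}{dt} s_{11}^{(2)}$, $s_{22}^{(2)} = \frac{d}{dt} s_{12}^{(2)}$, $bA s_{11}^{(2)} - 2a s_{21}^{(2)} = \frac{d}{dt} s_{21}^{(2)}$, $bA s_{12}^{(2)} - 2a s_{22}^{(2)} = \frac{d}{dt} s_{22}^{(2)}$. The first two follow directly from $\frac{d}{dt} e^{L_i t} = L_i e^{L_i t}$ together with the commutativity $L_1 L_2 = L_2 L_1$ already recorded in \eqref{L1 L2}. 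For the second pair, I would replace $bA$ by $-L_1 L_2$ on $V_2$ via \eqref{L1 L2}, invoke the spectral identity $L_1 + L_2 = -2aP_2$ to rewrite $L_i + 2aP_2 = -L_j$ on $\mathrm{span}\{e_n, n \in N_2\}$, and collect the exponentials; all cross terms then cancel and the required equalities fall out.

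The main obstacle is pure bookkeeping, namely keeping the commutations among $L_1$, $L_2$, $\gamma$ straight and correctly reinterpreting $bA$ as $-L_1 L_2$ on the invariant subspace $V_2$. A conceptually cleaner alternative is to diagonalize mode by mode: on $\mathrm{span}(f_n) \times \mathrm{span}(e_n)$ for each $n \in N_2$ the operator $\mathcal{A}$ is represented by the $2 \times 2$ matrix with entries $0$, $\sqrt{\alpha_n}$, $-b\sqrt{\alpha_n}$, $-2a$, whose characteristic polynomial is $\lambda^2 + 2a\lambda + b\alpha_n$ with the two distinct real roots $l_n^1, l_n^2$; applying Sylvester's formula for the matrix exponential and summing over $n \in N_2$ recovers the stated form of $S_2(t)$ directly, bypassing the operator-algebraic manipulations.
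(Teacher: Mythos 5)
Your proposal is correct and follows exactly the paper's strategy: verify $S_2(0)=I$ on $V_2$ and $\frac{d}{dt}S_2(t)x=\mathcal A S_2(t)x$, using $e^{L_i t}|_{t=0}=P_2$, $L_1-L_2=2\gamma$, $L_1+L_2=-2aP_2$ and $bAP_2=-L_1L_2$. You merely spell out the entrywise computation that the paper dismisses as ``straightforward,'' and your checks (including the mode-by-mode Sylvester alternative) are accurate.
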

\begin{proof}
Analogously to the proof of Lemma \ref{form of S1}, it is sufficient to show that 
\begin{itemize}
\item[(i)] $$
S_2(0) = \left( \begin{array}{cc}
I&0\\
0&I
\end{array} \right),
$$
\item[(ii)] $$
\frac{d}{dt} S_2(t)x = \mathcal A S_2(t)x, \quad \forall x \in V_2, \quad t \geq 0.
$$
\end{itemize}

As for (i), it is just matter of evaluating the operators at time $t=0$ and simplifying. For example the upper--left operator simplifies as follows
$$
\left. \frac{1}{2} \gamma^{-1} \left( - L_2 e^{L_1 t} + L_1 e^{L_2 t} \right) \right|_{t=0} = \frac{1}{2} \gamma^{-1} \left( L_1 - L_2 \right) P_2 = \gamma^{-1} \gamma P_2 = P_2.
$$

Consequently we arrive at
$$
S_2(0) \left( \begin{array}{r}
x_1\\
x_2
\end{array} \right) = \left( \begin{array}{cc}
P_2 & 0 \\
0 & P_2
\end{array} \right) \left( \begin{array}{r}
x_1\\
x_2
\end{array} \right),
$$
which is an identity operator for $x_1 \in \text{span} \, \{ f_n, n \in N_2 \}$, $x_2 \in \text{span} \, \{ e_n, n \in N_2 \}$.

(ii) may be verified by straightforward computation.
\end{proof}

The adjoint operator of $(S_2(t), t \geq 0)$ is introduced in Lemma \ref{form of S2 adjoint}.
\begin{lemma}\label{form of S2 adjoint}
For all $x = (x_1, x_2)^{\top} \in V_2$ we have
$$
S_2^*(t) \left( \begin{array}{r}
x_1\\
x_2
\end{array} \right) = \left( \begin{array}{cc}
r_{11}(t)&r_{12}(t)\\
r_{21}(t)&r_{22}(t)
\end{array} \right) \left( \begin{array}{r}
x_1\\
x_2
\end{array} \right), \quad \forall t \geq 0,
$$
where
\begin{align}
r_{11}(t) &= \frac{1}{2} \gamma^{-1} \left( - L_2 e^{L_1 t} + L_1 e^{L_2 t} \right), \notag \\
r_{12}(t) &= \frac{1}{2} (-A)^{- \frac{1}{2}} \gamma^{-1} \left( - L_1 L_2 e^{L_1 t} + L_1 L_2 e^{L_2 t} \right) (-A)^{- \frac{1}{2}}, \notag \\
r_{21}(t) &= \frac{1}{2} (-A)^{\frac{1}{2}} \gamma^{-1} \left( e^{L_1 t} - e^{L_2 t} \right) (-A)^{\frac{1}{2}}, \notag \\
r_{22}(t) &= \frac{1}{2} \gamma^{-1} \left( L_1 e^{L_1 t} - L_2 e^{L_2 t} \right) \notag.
\end{align}
\end{lemma}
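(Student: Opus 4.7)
The plan is to mimic the strategy used for Lemma \ref{form of S1 adjoint}: verify directly that $\langle S_2(t)x,y\rangle_V = \langle x, S_2^*(t)y\rangle_V$ for all $x,y\in V_2$ and all $t\geq 0$, where $S_2^*(t)$ is the candidate operator written in the statement. No abstract adjoint computation is really required; once one unpacks the block form, everything reduces to an identity in $L^2(D)$.

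The first step is to expand both sides using the definition of $\langle\cdot,\cdot\rangle_V$. Writing $B:=(-A)^{1/2}$ and denoting the entries of $S_2(t)$ by $s_{ij}(t)$ and those of the candidate $S_2^*(t)$ by $r_{ij}(t)$, one gets
\begin{align}
\langle S_2(t)x,y\rangle_V &= \langle B(s_{11}x_1+s_{12}x_2), By_1\rangle_{L^2} + \langle s_{21}x_1+s_{22}x_2, y_2\rangle_{L^2}, \notag \\
\langle x, S_2^*(t)y\rangle_V &= \langle Bx_1, B(r_{11}y_1+r_{12}y_2)\rangle_{L^2} + \langle x_2, r_{21}y_1+r_{22}y_2\rangle_{L^2}. \notag
\end{align}
Matching the four bilinear forms (in $(x_1,y_1)$, $(x_2,y_1)$, $(x_1,y_2)$, $(x_2,y_2)$) yields the identities
\begin{align}
B^2 s_{11} &= r_{11}^{\mathrm{L^2,*}} B^2, & s_{12}^{\mathrm{L^2,*}} B^2 &= r_{21}, & B^2 r_{12} &= s_{21}^{\mathrm{L^2,*}}, & s_{22}^{\mathrm{L^2,*}} &= r_{22},
\end{align}
where $\cdot^{\mathrm{L^2,*}}$ denotes the $L^2$-adjoint.

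The second step is the observation that makes these identities trivial: all the operators appearing in $S_2(t)$, namely $P_2$, $\gamma$, $\gamma^{-1}$, $L_1$, $L_2$, $e^{L_1 t}$, $e^{L_2 t}$, and their compositions, are built by functional calculus out of $A$ on the finite-dimensional invariant subspace $\operatorname{span}\{e_n, n\in N_2\}$. Hence they all pairwise commute, each commutes with $B=(-A)^{1/2}$ and with $B^2$, and (being real functions of self-adjoint operators restricted to that subspace) each is self-adjoint on $L^2(D)$. Consequently $s_{ij}^{\mathrm{L^2,*}}=s_{ij}$ for every $i,j$, and commutation with $B^{\pm 2}$ immediately gives $r_{11}=s_{11}$, $r_{22}=s_{22}$, $r_{21}=B^{2}s_{12}=B\,s_{12}\,B$, and $r_{12}=B^{-2}s_{21}=B^{-1}s_{21}B^{-1}$, which is exactly the formula in the statement.

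The only potential obstacle is purely bookkeeping: keeping track of the asymmetric weighting in $\langle\cdot,\cdot\rangle_V$ (with $B^2=-A$ appearing in the first coordinate but not the second) as it propagates through the off-diagonal blocks and produces the $(-A)^{\pm 1/2}$ factors on the left and right of $r_{12}(t)$ and $r_{21}(t)$. The computation can be carried out coordinate-wise on the basis $\{e_n:n\in N_2\}$, where everything becomes a finite family of scalar identities, so convergence and domain issues do not arise.
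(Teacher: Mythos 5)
Your proposal is correct and follows exactly the paper's (one-line) proof: verifying $\left\langle S_2(t)x, y \right\rangle_V = \left\langle x, S_2^*(t)y \right\rangle_V$ directly from the definition of $\left\langle \cdot, \cdot \right\rangle_V$. The key observations you supply --- that all entries of $S_2(t)$ are real functions of $A$ on the finite-dimensional subspace $\operatorname{span}\{e_n, n \in N_2\}$, hence $L^2$-self-adjoint and commuting with $(-A)^{\pm \frac{1}{2}}$, so that the $V$-adjoint merely transposes the block matrix and conjugates the off-diagonal entries by $(-A)^{\pm \frac{1}{2}}$ --- are exactly the details the paper leaves to the reader.
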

\begin{proof}
It is possible to verify that
$$
\left\langle S_2(t)x, y \right\rangle_V = \left\langle x, S_2^*(t)y \right\rangle_V, \quad \forall x, y \in V_2, \quad \forall t \geq 0.
$$
\end{proof}

Using Lemma \ref{form of S2} and Lemma \ref{form of S2 adjoint}, it is possible to compute the integrand in \eqref{qinfty} and to obtain the formula for the covariance operator $Q_{\infty}^{(a,b)}$ for the case $\alpha_n < \frac{a^2}{b}$.

\begin{lemma}\label{form of Q infinity second case}
The covariance operator $Q_{\infty}^{(a,b)}$ takes the form
\begin{align}
Q_{\infty}^{(a,b)}\left( \begin{array}{c}
x_1\\
x_2
\end{array} \right) &= \sum_{n \in N_2} \sum_{k \in N_2} \frac{\left\langle Qe_n, e_k \right\rangle_{L^2(D)}}{b^2 (\alpha_n - \alpha_k)^2 + 8a^2 b (\alpha_n + \alpha_k)} \times \\
&\left( \begin{array}{c}
4a \alpha_n \left\langle x_1, e_n \right\rangle_{L^2(D)} e_k + b(\alpha_k - \alpha_n) \left\langle x_2, e_n \right\rangle_{L^2(D)} e_k\\
b \alpha_n(\alpha_n - \alpha_k) \left\langle x_1, e_n \right\rangle_{L^2(D)} e_k + 2ab (\alpha_n + \alpha_k) \left\langle x_2, e_n \right\rangle_{L^2(D)} e_k
\end{array} \right), \label{Q infinity for N2}
\end{align}
for any $(x_1, x_2)^{\top} \in V_2.$
\end{lemma}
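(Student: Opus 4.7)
The plan is to mimic the proof of Lemma \ref{form of Q infinity} verbatim, with trigonometric functions replaced by their hyperbolic analogues. The crucial observation is that when acting on the eigenvectors $\{e_n, n \in N_2\}$ of $A$, the semigroup $S_2(t)$ from Lemma \ref{form of S2} admits a presentation parallel to $S_1(t)$: using $l_n^1 + l_n^2 = -2a$ and $l_n^1 - l_n^2 = 2\tilde\beta_n$, where $\tilde\beta_n = \sqrt{a^2 - b\alpha_n}$, one has
\begin{align}
\tfrac12(e^{l_n^1 t} + e^{l_n^2 t}) &= e^{-at} \cosh(\tilde\beta_n t), \notag \\
\tfrac{1}{2\tilde\beta_n}(e^{l_n^1 t} - e^{l_n^2 t}) &= e^{-at}\, \tilde\beta_n^{-1} \sinh(\tilde\beta_n t). \notag
\end{align}
A short check shows that, coordinate-wise, each entry of the matrix $S_2(t)$ is obtained from the corresponding entry of $S_1(t)$ from Lemma \ref{form of S1} by the formal substitutions $\cos(\beta t) \mapsto \cosh(\tilde\beta t)$, $\beta^{-1}\sin(\beta t)\mapsto \tilde\beta^{-1} \sinh(\tilde\beta t)$, and $\beta \mapsto \tilde\beta$ (using $\tilde\beta_n^2 = -\beta_n^2$ to handle $s_{21}$, in which the factor $-\beta - a^2\beta^{-1}$ becomes $\tilde\beta - a^2 \tilde\beta^{-1}$ after the change of sign of $\beta^2$). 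The same correspondence transports to $S_2^*(t)$ in Lemma \ref{form of S2 adjoint}.

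The second step is to evaluate the four entries of $\int_0^\infty S_2(t)\Phi\Phi^* S_2^*(t)\, dt$ by reducing them, as in the proof of Lemma \ref{form of Q infinity}, to $L^2(D)$-expansions against the basis $\{e_n, n \in N_2\}$. This reduces the problem to three elementary integrals of $e^{-2at}$ against products of $\sinh(\tilde\beta_n t), \cosh(\tilde\beta_n t), \sinh(\tilde\beta_k t), \cosh(\tilde\beta_k t)$. For instance, a direct expansion into exponentials (all four exponents $-2a\pm\tilde\beta_n\pm\tilde\beta_k$ are negative because $n,k\in N_2$) gives
\begin{equation}
\int_0^\infty e^{-2at} \sinh(\tilde\beta_n t)\sinh(\tilde\beta_k t)\, dt = \frac{4a\,\tilde\beta_n\tilde\beta_k}{b^2(\alpha_n-\alpha_k)^2 + 8a^2 b(\alpha_n+\alpha_k)}, \notag
\end{equation}
and analogous computations for the mixed $\sinh\cosh$ and $\cosh\cosh$ integrals produce the same denominator. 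The cleanest way to see that the denominator coincides with the trigonometric one is to note that the hyperbolic integrals are the analytic continuation of the trigonometric ones under $\beta_n \mapsto i\tilde\beta_n$; the denominator $b^2(\alpha_n-\alpha_k)^2 + 8a^2 b(\alpha_n+\alpha_k)$ depends only on $\alpha_n,\alpha_k$ and is therefore invariant under this substitution.

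Assembling, the $\tilde\beta_n,\tilde\beta_k$ factors in each numerator cancel against the $\tilde\beta_n^{-1}, \tilde\beta_k^{-1}$ appearing in the $S_2(t), S_2^*(t)$ matrix entries, exactly as in the trigonometric case. Consequently, all four components $\int_0^\infty q_{ij}(t)\, dt$ yield the identical coefficients appearing in \eqref{Q infinity for N1}, only with the index sets $N_1$ replaced by $N_2$, which is the claimed formula \eqref{Q infinity for N2}.

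The main obstacle is purely bookkeeping: verifying that every sign switch coming from $\tilde\beta_n^2 = -\beta_n^2$ is compensated by a corresponding sign switch in the integrand and in the analytic continuation of the integral, so that the final formula in Lemma \ref{form of Q infinity second case} is truly identical (not merely analogous) to the one in Lemma \ref{form of Q infinity}. Apart from this, no new idea is needed beyond the one used in the proof of Lemma \ref{form of Q infinity}.
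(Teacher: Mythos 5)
Your proposal is correct and follows essentially the same route as the paper: the paper's proof likewise expands the four entries of $\int_0^\infty S_2(t)\Phi\Phi^* S_2^*(t)\,dt$ against the eigenbasis $\{e_n,\, n\in N_2\}$ and reduces everything to the same elementary convergent integrals, only written directly in terms of $e^{l_n^1 t}-e^{l_n^2 t}$ and $l_n^1 e^{l_n^1 t}-l_n^2 e^{l_n^2 t}$ rather than in the hyperbolic form $e^{-at}\sinh(\tilde\beta_n t)$, $e^{-at}\cosh(\tilde\beta_n t)$. Your reparametrization and the analytic-continuation remark are a valid and tidy way of explaining why the resulting coefficients coincide with those of Lemma \ref{form of Q infinity}, but the underlying computation is identical to the paper's.
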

\begin{proof}
According to \eqref{qinfty}, the covariance operator $Q_{\infty}^{(a,b)}$ may be expressed as
\begin{equation}
Q_{\infty}^{(a,b)} = \int_0^{\infty} S_2(t) \Phi \Phi^* S_2^*(t) \, dt = \int_0^{\infty} \left( \begin{array}{cc}
q_{11}(t)&q_{12}(t)\\
q_{21}(t)&q_{22}(t)
\end{array} \right) \, dt,
\end{equation}
where
\begin{align}
q_{11}(t) &= \frac{1}{4} \gamma^{-1} \left( e^{L_1 t} - e^{L_2 t} \right) Q (-A)^{\frac{1}{2}} \gamma^{-1} \left( e^{L_1 t} - e^{L_2 t} \right) (-A)^{\frac{1}{2}}, \\
q_{12}(t) &= \frac{1}{4} \gamma^{-1} \left( e^{L_1 t} - e^{L_2 t} \right) Q \gamma^{-1} \left( L_1 e^{L_1 t} - L_2 e^{L_2 t} \right), \\
q_{21}(t) &= \frac{1}{4} \gamma^{-1} \left( L_1 e^{L_1 t} - L_2 e^{L_2 t} \right) Q (-A)^{\frac{1}{2}} \gamma^{-1} \left( e^{L_1 t} - e^{L_2 t} \right) (-A)^{\frac{1}{2}}, \\
q_{22}(t) &= \frac{1}{4} \gamma^{-1} \left( L_1 e^{L_1 t} - L_2 e^{L_2 t} \right) Q \gamma^{-1} \left( L_1 e^{L_1 t} - L_2 e^{L_2 t} \right).
\end{align}

As in the proof of Lemma \ref{form of Q infinity}, we need to evaluate the integrals of $q_{11}(t)$, $q_{12}(t)$, $q_{21}(t)$ and $q_{22}(t)$. For every $x = (x_1, x_2)^{\top} \in V_2$, we have that
\begin{align}
q_{11}(t)x_1 &= \frac{1}{4} \gamma^{-1} \left( e^{L_1 t} - e^{L_2 t} \right) Q \sum_{n \in N_2} \alpha_n \frac{e^{l_n^1 t} - e^{l_n^2 t}}{\sqrt{a^2 - b \alpha_n}} \left\langle x_1, e_n \right\rangle_{L^2(D)} e_n \notag \\
&= \frac{1}{4} \gamma^{-1} \left( e^{L_1 t} - e^{L_2 t} \right) \sum_{n \in N_2} \sum_{k=1}^{\infty} \alpha_n \frac{e^{l_n^1 t} - e^{l_n^2 t}}{\sqrt{a^2 - b \alpha_n}} \left\langle Qe_n, e_k \right\rangle_{L^2(D)} \left\langle x_1, e_n \right\rangle_{L^2(D)} e_k \notag \\
&= \frac{1}{4} \sum_{n \in N_2} \sum_{k \in N_2} \alpha_n \frac{e^{l_n^1 t} - e^{l_n^2 t}}{\sqrt{a^2 - b \alpha_n}} \frac{e^{l_k^1 t} - e^{l_k^2 t}}{\sqrt{a^2 - b \alpha_k}} \left\langle Qe_n, e_k \right\rangle_{L^2(D)} \left\langle x_1, e_n \right\rangle_{L^2(D)} e_k. \notag
\end{align}

If we now use the fact that
$$
\int_0^{\infty} \left( e^{l_n^1 t} - e^{l_n^2 t} \right) \left( e^{l_k^1 t} - e^{l_k^2 t} \right) \, dt = \frac{16a \sqrt{a^2 - b \alpha_n} \sqrt{a^2 - b \alpha_k}}{b^2 (\alpha_n - \alpha_k)^2 + 8a^2 b (\alpha_n + \alpha_k)},
$$
we arrive at
$$
\left( \int_0^{\infty} q_{11}(t) \, dt \right) x_1 = \sum_{n \in N_2} \sum_{k \in N_2} \frac{4a \alpha_n \left\langle Qe_n, e_k \right\rangle_{L^2(D)}}{b^2 (\alpha_n - \alpha_k)^2 + 8a^2 b (\alpha_n + \alpha_k)} \left\langle x_1, e_n \right\rangle_{L^2(D)} e_k.
$$

As for the operator $q_{12}(t)$
\begin{align}
q_{12}(t) x_2 &= \frac{1}{4} \gamma^{-1} \left( e^{L_1 t} - e^{L_2 t} \right) Q \sum_{n \in N_2} \frac{l_n^1 e^{l_n^1 t} - l_n^2 e^{l_n^2 t}}{\sqrt{a^2 - b \alpha_n}} \left\langle x_2, e_n \right\rangle_{L^2(D)} e_n \notag \\
&= \frac{1}{4} \gamma^{-1} \left( e^{L_1 t} - e^{L_2 t} \right) \sum_{n \in N_2} \sum_{k=1}^{\infty} \frac{l_n^1 e^{l_n^1 t} - l_n^2 e^{l_n^2 t}}{\sqrt{a^2 - b \alpha_n}} \left\langle Qe_n, e_k \right\rangle_{L^2(D)} \left\langle x_2, e_n \right\rangle_{L^2(D)} e_k \notag \\
&= \frac{1}{4} \sum_{n \in N_2} \sum_{k \in N_2} \frac{l_n^1 e^{l_n^1 t} - l_n^2 e^{l_n^2 t}}{\sqrt{a^2 - b \alpha_n}} \frac{e^{l_k^1 t} - e^{l_k^2 t}}{\sqrt{a^2 - b \alpha_k}} \left\langle Qe_n, e_k \right\rangle_{L^2(D)} \left\langle x_2, e_n \right\rangle_{L^2(D)} e_k. \notag
\end{align}

Now we use the fact that
$$
\int_0^{\infty} \left( l_n^1 e^{l_n^1 t} - l_n^2 e^{l_n^2 t} \right) \left( e^{l_k^1 t} - e^{l_k^2 t} \right) \, dt = \frac{4b(\alpha_k - \alpha_n) \sqrt{a^2 - b \alpha_n} \sqrt{a^2 - b \alpha_k}}{b^2 (\alpha_n - \alpha_k)^2 + 8a^2 b (\alpha_n + \alpha_k)}.
$$

Hence by integrating the formula for $q_{12}(t)x_2$ over $t$ from zero to infinity, we obtain
$$
\left( \int_0^{\infty} q_{12}(t) \, dt \right) x_2 = \sum_{n \in N_2} \sum_{k \in N_2} \frac{b (\alpha_k - \alpha_n) \left\langle Qe_n, e_k \right\rangle_{L^2(D)}}{b^2 (\alpha_n - \alpha_k)^2 + 8a^2 b (\alpha_n + \alpha_k)} \left\langle x_2, e_n \right\rangle_{L^2(D)} e_k.
$$

The expression for $q_{21}(t)x_1$ is similar to the previous one,
\begin{align}
q_{21}(t)x_1 &= \frac{1}{4} \gamma^{-1} \left( L_1 e^{L_1 t} - L_2 e^{L_2 t} \right) Q \sum_{n \in N_2} \alpha_n \frac{e^{l_n^1 t} - e^{l_n^2 t}}{\sqrt{a^2 - b \alpha_n}} \left\langle x_1, e_n \right\rangle_{L^2(D)} e_n \notag \\
&= \frac{1}{4} \gamma^{-1} \left( L_1 e^{L_1 t} - L_2 e^{L_2 t} \right) \sum_{n \in N_2} \sum_{k=1}^{\infty} \alpha_n \frac{e^{l_n^1 t} - e^{l_n^2 t}}{\sqrt{a^2 - b \alpha_n}} \left\langle Qe_n, e_k \right\rangle_{L^2(D)} \left\langle x_1, e_n \right\rangle_{L^2(D)} e_k \notag \\
&= \frac{1}{4} \sum_{n \in N_2} \sum_{k \in N_2} \frac{e^{l_n^1 t} - e^{l_n^2 t}}{\sqrt{a^2 - b \alpha_n}} \frac{l_k^1 e^{l_k^1 t} - l_k^2 e^{l_k^2 t}}{\sqrt{a^2 - b \alpha_k}} \left\langle Qe_n, e_k \right\rangle_{L^2(D)} \left\langle x_1, e_n \right\rangle_{L^2(D)} e_k. \notag
\end{align}

The integration over $t$ from zero to infinity yields the same result as before with indicies $n$ and $k$ reversed. Hence we obtain that
$$
\left( \int_0^{\infty} q_{21}(t) \, dt \right) x_1 = \sum_{n \in N_2} \sum_{k \in N_2} \frac{b \alpha_n (\alpha_n - \alpha_k) \left\langle Qe_n, e_k \right\rangle_{L^2(D)}}{b^2 (\alpha_n - \alpha_k)^2 + 8a^2 b (\alpha_n + \alpha_k)} \left\langle x_1, e_n \right\rangle_{L^2(D)} e_k.
$$

In a similar manner, we have that
$$
q_{22}(t) x_2 = \frac{1}{4} \sum_{n \in N_2} \sum_{k \in N_2} \frac{l_n^1 e^{l_n^1 t} - l_n^2 e^{l_n^2 t}}{\sqrt{a^2 - b \alpha_n}} \frac{l_k^1 e^{l_k^1 t} - l_k^2 e^{l_k^2 t}}{\sqrt{a^2 - b \alpha_k}} \left\langle Qe_n, e_k \right\rangle_{L^2(D)} \left\langle x_2, e_n \right\rangle_{L^2(D)} e_k
$$

and by evaluating the appropriate integral, we arrive at
$$
\left( \int_0^{\infty} q_{22}(t) \, dt \right) x_2 = \sum_{n \in N_2} \sum_{k \in N_2} \frac{2ab (\alpha_n + \alpha_k) \left\langle Qe_n, e_k \right\rangle_{L^2(D)}}{b^2 (\alpha_n - \alpha_k)^2 + 8a^2 b (\alpha_n + \alpha_k)} \left\langle x_2, e_n \right\rangle_{L^2(D)} e_k.
$$

These results may be summarized by the formula \eqref{Q infinity for N2}, which completes the proof.
\end{proof}

\subsection{Case $\alpha_n = \frac{a^2}{b}$}
In the case $\alpha_n = \frac{a^2}{b}$, the situation is much easier. The eigenvalue of the operator $\mathcal A$ is $-a$, so the operator $\mathcal A$ generates $C_0$--semigroup on $V$, which is also exponentially stable.

Define the operator $P_3$ in a similar fashion as $P_1$ and $P_2$ above
\begin{equation}
P_3 x = \sum_{n \in N_3} \left\langle x, e_n \right\rangle_{L^2(D)} e_n.
\end{equation}

That is the operator of projection on the $\text{span} \, \{ e_n, n \in N_3 \}$ (that is $P_3 : L^2(D) \rightarrow \text{span} \, \{ e_n, n \in N_3 \}$). The form of semigroup $(S_3(t), t \geq 0)$ is given by the following Lemma.

\begin{lemma}\label{form of S3}
For all $x = (x_1, x_2)^{\top} \in V_3$ we have
$$
S_3(t) \left( \begin{array}{r}
x_1\\
x_2
\end{array} \right) = \left( \begin{array}{cc}
(1 + at) e^{-at} P_3 & te^{-at} P_3 \\
-a^2 t e^{-at} P_3 & (1 - at) e^{-at} P_3
\end{array} \right) \left( \begin{array}{r}
x_1\\
x_2
\end{array} \right), \quad \forall t \geq 0.
$$
\end{lemma}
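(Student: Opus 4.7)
The plan is to mimic the proofs of Lemma \ref{form of S1} and Lemma \ref{form of S2}: it suffices to check that (i) $S_3(0)$ acts as the identity on $V_3$, and (ii) $\frac{d}{dt} S_3(t)x = \mathcal{A} S_3(t)x$ for every $x \in V_3$ and $t \geq 0$. Because $N_3$ is a finite set, the subspace $V_3$ is finite-dimensional and $\mathcal{A}$ restricted to $V_3$ is a bounded operator, so there are no domain issues; uniqueness for the associated Cauchy problem on $V_3$ then identifies $S_3(t)$ as the desired semigroup.

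For (i), evaluation at $t=0$ gives $\operatorname{diag}(P_3, P_3)$ on the off-diagonal blocks being zero, and $P_3$ acts as the identity on $\operatorname{span}\{f_n, n \in N_3\}$ and on $\operatorname{span}\{e_n, n \in N_3\}$, so $S_3(0)$ is the identity on $V_3$. For (ii), the crucial algebraic identity is $bA P_3 x = -a^2 P_3 x$ for every $x \in L^2(D)$, which follows immediately from $Ae_n = -\alpha_n e_n$ and the defining relation $b\alpha_n = a^2$ for $n \in N_3$. Using this, each of the four matrix entries is differentiated by elementary calculus: for instance,
$$
\frac{d}{dt}\bigl[(1+at)e^{-at} P_3\bigr] = \bigl(a - a(1+at)\bigr)e^{-at} P_3 = -a^2 t e^{-at} P_3,
$$
which matches the $(2,1)$-entry of $S_3(t)$ — precisely what the first row $(0, I)$ of $\mathcal{A}$ applied to the first column of $S_3(t)$ demands. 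The remaining three checks proceed analogously, combining $\frac{d}{dt}[t e^{-at}] = (1-at)e^{-at}$ and $\frac{d}{dt}[(1-at)e^{-at}] = (-2a + a^2 t)e^{-at}$ with the identity $bA P_3 = -a^2 P_3$ to reproduce the corresponding entries of $\mathcal{A} S_3(t)$.

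No genuine obstacle arises. This is the degenerate case in which the characteristic polynomial $\lambda^2 + 2a\lambda + b\alpha_n = (\lambda+a)^2$ has the double root $-a$, and the Jordan-block structure automatically produces the factors $(1 \pm at)e^{-at}$ and $te^{-at}$ instead of the trigonometric or hyperbolic combinations encountered in Lemmas \ref{form of S1} and \ref{form of S2}. The verification is pure bookkeeping and is in fact simpler than in either of the two previous cases, since $V_3$ is finite-dimensional and all operators involved commute with $P_3$.
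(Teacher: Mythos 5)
Your proposal is correct and follows exactly the paper's route: verify that $S_3(0)$ restricts to the identity on $V_3$ and that $\frac{d}{dt}S_3(t)x = \mathcal{A}S_3(t)x$, the latter being the ``straightforward computation'' the paper leaves implicit, which you carry out correctly using the key identity $bAP_3 = -a^2P_3$ on $\mathrm{span}\,\{e_n,\, n\in N_3\}$.
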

\begin{proof}
If we evaluate the above operator $S_3(t)$ at time $t=0$, we obtain
$$
S_3(0) \left( \begin{array}{r}
x_1\\
x_2
\end{array} \right) = \left( \begin{array}{cc}
P_3 & 0 \\
0 & P_3
\end{array} \right) \left( \begin{array}{r}
x_1\\
x_2
\end{array} \right),
$$
which is an identity operator for $x_1 \in \text{span} \, \{ f_n, n \in N_3 \}$, $x_2 \in \text{span} \, \{ e_n, n \in N_3 \}$. The property
$$
\frac{d}{dt} S_3(t)x = \mathcal A S_3(t)x, \quad \forall x \in V_3, \quad \forall t \geq 0,
$$
may be verified by straightforward computation.
%
\end{proof}

The adjoint operator of $(S_3(t), t \geq 0)$ is introduced in the following Lemma.
\begin{lemma}\label{form of S3 adjoint}
For all $x = (x_1, x_2)^{\top} \in V_3$ we have
$$
S_3^*(t) \left( \begin{array}{r}
x_1\\
x_2
\end{array} \right) = \left( \begin{array}{cc}
(1 + at) e^{-at} P_3 & -bt e^{-at} P_3 \\
\frac{a^2}{b} t e^{-at} P_3 & (1 - at) e^{-at} P_3
\end{array} \right) \left( \begin{array}{r}
x_1\\
x_2
\end{array} \right), \quad \forall t \geq 0.
$$
\end{lemma}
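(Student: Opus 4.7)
The plan is to verify the identity
$$
\langle S_3(t)x, y\rangle_V = \langle x, S_3^*(t)y\rangle_V, \qquad \forall x, y \in V_3,\ t\geq 0,
$$
by direct expansion, exactly as in the proofs of Lemma \ref{form of S1 adjoint} and Lemma \ref{form of S2 adjoint}. The inner product on $V$ contains the factor $(-A)^{1/2}$ in the first slot, which is the sole source of asymmetry between $S_3(t)$ and $S_3^*(t)$, and on $V_3$ this operator simplifies drastically.

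First I would use the crucial fact that on $V_3$ the operator $-A$ acts as the scalar $a^2/b$: since every $n\in N_3$ satisfies $\alpha_n = a^2/b$, one has $(-A)z = (a^2/b)\,z$ for all $z\in\mathrm{span}\{e_n, n\in N_3\}$, and $P_3$ acts as the identity on such $z$. In particular,
$$
\langle (-A)^{1/2}u,(-A)^{1/2}v\rangle_{L^2(D)} = \tfrac{a^2}{b}\,\langle u,v\rangle_{L^2(D)}
$$
for $u,v$ in that span. Expanding $\langle S_3(t)x,y\rangle_V$ via Lemma \ref{form of S3} then yields four scalar terms of the form $c_{ij}(t)\langle x_i,y_j\rangle_{L^2(D)}$, where the first-slot inner product contributes an extra factor $a^2/b$ to the $(i,1)$ coefficients while the second slot does not.

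Next I would expand $\langle x, S_3^*(t)y\rangle_V$ with the proposed matrix and collect the same four inner products; matching coefficients term by term gives the four entries. Concretely, the diagonal entries of $S_3^*(t)$ coincide with those of $S_3(t)$, whereas the $(1,2)$ coefficient $-a^2 t e^{-at}$ of $S_3(t)$ must be divided by $a^2/b$ when moved across from the second slot into the first slot, yielding $-bt e^{-at}P_3$; symmetrically, the $(2,1)$ coefficient $te^{-at}$ picks up the factor $a^2/b$ when moved from the first slot to the second, giving $\tfrac{a^2}{b}te^{-at}P_3$. This is the standard ``$b/a^2$ vs.\ $a^2/b$ bookkeeping'' already encountered in Lemmas \ref{form of S1 adjoint} and \ref{form of S2 adjoint}, specialized to the scalar value of $-A$ on $V_3$.

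No genuine obstacle is expected; the argument is purely computational, the only point of care being the correct tracking of the $a^2/b$ factor whenever one exchanges $\langle(-A)^{1/2}\cdot,(-A)^{1/2}\cdot\rangle_{L^2(D)}$ and $\langle\cdot,\cdot\rangle_{L^2(D)}$.
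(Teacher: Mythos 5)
Your proposal is correct and follows exactly the paper's approach: the paper's proof simply states that one verifies $\left\langle S_3(t)x, y \right\rangle_V = \left\langle x, S_3^*(t)y \right\rangle_V$ on $V_3$, which is precisely the computation you carry out. Your bookkeeping of the factor $a^2/b$ (dividing the $(2,1)$ entry $-a^2te^{-at}$ by $a^2/b$ to get $-bte^{-at}$, and multiplying $te^{-at}$ by $a^2/b$) checks out against the stated matrix.
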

\begin{proof}
It is possible to verify that
$$
\left\langle S_3(t)x, y \right\rangle_V = \left\langle x, S_3^*(t)y \right\rangle_V, \quad \forall x, y \in V_3, \quad \forall t \geq 0.
$$
\end{proof}

As in the two previous cases, we may use Lemma \ref{form of S3} and Lemma \ref{form of S3 adjoint} to compute the covariance operator $Q_{\infty}^{(a,b)}$.
\begin{lemma}\label{form of Q infinity third case}
The covariance operator $Q_{\infty}^{(a,b)}$ takes the form
\begin{equation}
Q_{\infty}^{(a,b)} \left( \begin{array}{c}
x_1\\
x_2
\end{array} \right) = \left( \begin{array}{cc}
\frac{1}{4ab} P_3 Q & 0 \\
0 & \frac{1}{4a} P_3 Q
\end{array} \right) \left( \begin{array}{c}
x_1\\
x_2
\end{array} \right), \quad \forall \left( \begin{array}{r}
x_1\\
x_2
\end{array} \right) \in V_3. \label{Q infinity for N3}
\end{equation}
\end{lemma}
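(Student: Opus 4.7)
The plan is to follow the same scheme as in the proofs of Lemma \ref{form of Q infinity} and Lemma \ref{form of Q infinity second case}: apply \eqref{qinfty} with the semigroup $S_3(t)$ and its adjoint obtained in Lemma \ref{form of S3} and Lemma \ref{form of S3 adjoint}, carry out the block multiplication, and evaluate the resulting scalar integrals in $t$. The simplification comes from the fact that $\Phi\Phi^* = \begin{pmatrix}0&0\\0&Q\end{pmatrix}$, so only the second column of $S_3^*(t)$ survives the pre-multiplication by $\Phi\Phi^*$, and a direct block computation yields
$$
S_3(t)\Phi\Phi^*S_3^*(t) = e^{-2at}\begin{pmatrix}\tfrac{a^2}{b}t^2 & t(1-at)\\ \tfrac{a^2}{b}t(1-at) & (1-at)^2\end{pmatrix}\,P_3 Q P_3,
$$
where the scalar matrix is understood to multiply the single bounded operator $P_3 Q P_3$ componentwise. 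Since $N_3$ is finite, the operator $P_3 Q P_3$ has finite rank and there is no issue in exchanging the integral in $t$ with the (finite) sum implicit in $P_3$.

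The second step is to evaluate the four elementary integrals $\int_0^\infty e^{-2at}t^j\,dt = j!/(2a)^{j+1}$. A short calculation gives
$$
\int_0^\infty e^{-2at}t^2\,dt = \tfrac{1}{4a^3},\qquad \int_0^\infty e^{-2at}(1-at)^2\,dt = \tfrac{1}{4a},\qquad \int_0^\infty e^{-2at}t(1-at)\,dt = 0.
$$
The vanishing of the last integral is the only nontrivial observation; it forces both off-diagonal blocks of the covariance operator to vanish, which is exactly what produces the diagonal structure claimed in \eqref{Q infinity for N3}. The diagonal blocks then evaluate to $\tfrac{a^2}{b}\cdot\tfrac{1}{4a^3}=\tfrac{1}{4ab}$ and $\tfrac{1}{4a}$, respectively.

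Finally, the passage from $P_3 Q P_3$ to $P_3 Q$ written in the statement is a restriction remark: for $(x_1,x_2)^\top \in V_3$ one has $x_1\in\mathrm{span}\{f_n,\,n\in N_3\}$ and $x_2\in\mathrm{span}\{e_n,\,n\in N_3\}$, so the right-most $P_3$ acts as the identity on each coordinate (under the basis identification $f_n = \alpha_n^{-1/2}e_n$). There is no genuine obstacle: every operator involved is bounded, the index set $N_3$ is finite, and the calculation is essentially the easiest variant of the arguments already carried out in the previous two cases. The main care is simply the bookkeeping of the block multiplication and the verification that the off-diagonal $t$-integral does in fact vanish.
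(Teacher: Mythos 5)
Your proposal is correct and follows exactly the paper's own route: compute the block product $S_3(t)\Phi\Phi^*S_3^*(t)$, which the paper records with the same integrand $e^{-2at}\bigl(\begin{smallmatrix}\frac{a^2}{b}t^2 & t(1-at)\\ \frac{a^2}{b}t(1-at) & (1-at)^2\end{smallmatrix}\bigr)P_3QP_3$, integrate term by term (the paper calls this ``straightforward integration,'' which you carry out explicitly, including the key vanishing of $\int_0^\infty e^{-2at}t(1-at)\,dt$), and drop the right-hand $P_3$ on $V_3$. Nothing to add.
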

\begin{proof}
According to \eqref{qinfty}, the operator $Q_{\infty}^{(a,b)}$ may be expressed as
\begin{align}
Q_{\infty}^{(a,b)} &= \int_0^{\infty} S_3(t) \Phi \Phi^* S_3^*(t) \, dt \notag \\
&= \int_0^{\infty} \left( \begin{array}{cc}
\frac{a^2}{b} t^2 e^{-2at} P_3 Q P_3 & (1 - at) t e^{-2at} P_3 Q P_3 \\
\frac{a^2}{b} t (1 - at) e^{-2at} P_3 Q P_3 & (1 - at)^2 e^{-2at} P_3 Q P_3
\end{array} \right) \, dt \notag
\end{align}
and the result is just straightforward integration. Since we consinder only $(x_1, x_2)^{\top}$ from the space $V_3$, the first (right--hand side) projection $P_3$ may be omitted.
\end{proof}

The formula \eqref{Q infinity for N3} may be also written in the form like \eqref{Q infinity for N1} in Lemma \ref{form of Q infinity} or \eqref{Q infinity for N2} in Lemma \ref{form of Q infinity second case}
$$
Q_{\infty}^{(a,b)} \left( \begin{array}{c}
x_1\\
x_2
\end{array} \right) = \sum_{n \in N_3} \sum_{k \in N_3} \left\langle Qe_n, e_k \right\rangle_{L^2(D)} \left( \begin{array}{c}
\frac{1}{4ab} \left\langle x_1, e_n \right\rangle_{L^2(D)} e_k\\
\frac{1}{4a} \left\langle x_2, e_n \right\rangle_{L^2(D)} e_k
\end{array} \right),
$$
for any $(x_1, x_2)^{\top} \in V_3$, which is in fact the same formula as \eqref{Q infinity for N1} (or \eqref{Q infinity for N2}), with $\alpha_n = \frac{a^2}{b} = \alpha_k$ and sums over the set $N_3$. It is indeed some kind of consistency of these formulae \eqref{Q infinity for N1}, \eqref{Q infinity for N2}, \eqref{Q infinity for N3}.

\subsection{Summary} \label{subsection summary}
We have computed the semigroups $(S_1(t), t \geq 0)$, $(S_2(t), t \geq 0)$ and $(S_3(t), t \geq 0)$ for the coordinates from the sets $N_1$, $N_2$ and $N_3$. The semigroup $(S(t), t \geq 0)$ (with the infinitesimal generator $\mathcal A$) is in fact combination of all of them and its form is stated in the following Theorem.

\begin{theorem} \label{form of S}
The operator $\mathcal A$ is the infinitesimal operator of the strongly continuous semigroup $(S(t), t \geq 0)$ on $V$, which takes the following form
\begin{align}
S(t) \left( \begin{array}{r}
x_1\\
x_2
\end{array} \right) &= S_1(t) \left( \begin{array}{cc}
P_1 & 0 \\
0 & P_1
\end{array} \right) \left( \begin{array}{r}
x_1\\
x_2
\end{array} \right) + S_2(t) \left( \begin{array}{cc}
P_2 & 0 \\
0 & P_2
\end{array} \right) \left( \begin{array}{r}
x_1\\
x_2
\end{array} \right) \notag \\
& \phantom{=} + S_3(t) \left( \begin{array}{cc}
P_3 & 0 \\
0 & P_3
\end{array} \right) \left( \begin{array}{r}
x_1\\
x_2
\end{array} \right), \quad \forall x = \left( \begin{array}{r}
x_1\\
x_2
\end{array} \right) \in V, \quad \forall t \geq 0.
\end{align}

Moreover, the semigroup $(S(t), t \geq 0)$ is exponentially stable.
\end{theorem}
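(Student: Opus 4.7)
The plan is to exploit the orthogonal decomposition $V = V_1 \oplus V_2 \oplus V_3$, observe that each $V_i$ is invariant under $\mathcal A$, and then glue the three semigroups constructed in Lemmas~\ref{form of S1}, \ref{form of S2}, \ref{form of S3}.

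First, I would verify that the three subspaces $V_i$ are mutually orthogonal in $V$: this is immediate because $\{e_n\}$ is an orthonormal basis of $L^2(D)$, $\{f_n\}$ is an orthonormal basis of $\mathrm{Dom}((-A)^{1/2})$ with the graph inner product, and the index sets $N_1, N_2, N_3$ are pairwise disjoint with union $\mathbb N$. The projection onto $V_i$ is exactly $\mathrm{diag}(P_i, P_i)$, so the formula in the statement reads $S(t) = \sum_{i=1}^3 S_i(t) \Pi_i$ with $\Pi_i$ the orthogonal projection onto $V_i$. Since each $V_i$ is $\mathcal A$-invariant, strong continuity, the semigroup property $S(t+s) = S(t)S(s)$, the initial value $S(0) = I$, and the identity $\frac{d}{dt} S(t) x = \mathcal A S(t) x$ on $\mathrm{Dom}(\mathcal A)$ all follow coordinate-wise from the corresponding properties of $S_1, S_2, S_3$ already established in the three lemmas. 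Thus $\mathcal A$ generates $(S(t), t \geq 0)$.

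For exponential stability, I would estimate $\|S_i(t)\|_{\mathcal L(V_i)}$ separately and then use $\|S(t)\|_{\mathcal L(V)} = \max_{i} \|S_i(t)\|_{\mathcal L(V_i)}$, which holds because of the orthogonality of the decomposition. The subspaces $V_2$ and $V_3$ are finite-dimensional (the sets $N_2, N_3$ are finite since $\alpha_n \to \infty$), and on each of them $\mathcal A$ has only eigenvalues with strictly negative real part ($-a$ for $V_3$ and $-a \pm \sqrt{a^2 - b\alpha_n}$ with $b\alpha_n < a^2$ but $b\alpha_n > 0$ for $V_2$, so both roots are negative). Hence $\|S_i(t)\|_{\mathcal L(V_i)} \leq K_i e^{-\rho_i t}$ for $i = 2, 3$ by standard finite-dimensional arguments.

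The main obstacle is the infinite-dimensional piece $S_1(t)$. On the two-dimensional invariant subspace $\mathrm{span}\{(f_n,0), (0,e_n)\}$, which has orthonormal coordinates $(c_1, c_2) \mapsto (c_1 f_n, c_2 e_n)$ in the $V$-norm, Lemma~\ref{form of S1} shows that $S_1(t)$ acts by the matrix
\[
M_n(t) = e^{-at} \begin{pmatrix} \cos(\omega_n t) + \frac{a}{\omega_n}\sin(\omega_n t) & \frac{\sqrt{\alpha_n}}{\omega_n}\sin(\omega_n t) \\ -\frac{b\sqrt{\alpha_n}}{\omega_n}\sin(\omega_n t) & \cos(\omega_n t) - \frac{a}{\omega_n}\sin(\omega_n t) \end{pmatrix},
\]
where $\omega_n = \sqrt{b\alpha_n - a^2}$, and the passage to $\sqrt{\alpha_n}$ and $b\sqrt{\alpha_n}$ in the off-diagonal entries comes from changing between the $e_n$ and $f_n = e_n/\sqrt{\alpha_n}$ bases. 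The key point is that the quantities $a/\omega_n$, $\sqrt{\alpha_n}/\omega_n$, $b\sqrt{\alpha_n}/\omega_n$ are uniformly bounded over $n \in N_1$: they tend respectively to $0$, $1/\sqrt{b}$, $\sqrt{b}$ as $n \to \infty$ (since $\alpha_n \to \infty$), while for the finitely many indices with $\alpha_n$ close to $a^2/b$ they remain finite individually. Hence there is a constant $C$ with $\|M_n(t)\| \leq C e^{-at}$ uniformly in $n \in N_1$, so that $\|S_1(t)\|_{\mathcal L(V_1)} \leq C e^{-at}$. Combining the three bounds yields $\|S(t)\|_{\mathcal L(V)} \leq K e^{-\rho t}$ with $\rho = \min(a, \rho_2, \rho_3) > 0$, completing the proof.
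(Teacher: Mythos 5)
Your proposal is correct and follows essentially the same route as the paper: decompose $V$ orthogonally into the $\mathcal A$-invariant subspaces $V_1\oplus V_2\oplus V_3$, glue the three semigroups from Lemmas \ref{form of S1}--\ref{form of S3}, and deduce exponential stability componentwise. In fact your uniform bound $\|M_n(t)\|\leq Ce^{-at}$ on the $2\times 2$ blocks of $S_1(t)$ supplies a detail the paper leaves implicit (it justifies stability of $S_1$ only by noting the eigenvalues have real part $-a$, which alone is not sufficient in infinite dimensions), so your write-up is, if anything, more complete.
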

\begin{proof}
For every $x \in V$, its projections to the space $V_i, \, i = 1, 2, 3$ are taken and then the appropriate semigroup to the appropriate coordinates is applied. From the proofs of Lemmas \ref{form of S1}, \ref{form of S2} and \ref{form of S3}, it is also clear that
\begin{itemize}
\item[(i)] $$
S(0) = \left( \begin{array}{cc}
I&0\\
0&I
\end{array} \right),
$$
\item[(ii)] $$
\frac{d}{dt} S(t)x = \mathcal A S(t)x, \quad \forall x \in \text{Dom}(\mathcal A), \quad \forall t \geq 0,
$$
\end{itemize}
which means, that this is the form of semigroup $(S(t), t \geq 0)$ with infinitesimal generator $\mathcal A$. Exponential stability is implied by exponential stability of semigroups $(S_1(t), t \geq 0)$, $(S_2(t), t \geq 0)$ and $(S_3(t), t \geq 0)$.
\end{proof}

The covariance operator $Q_{\infty}^{(a,b)}$ is in fact combined in the same way (we could have used marks $Q_{\infty, \, 1}^{(a,b)}, Q_{\infty, \, 2}^{(a,b)}$ and $Q_{\infty, \, 3}^{(a,b)}$ in the previous cases), but since \eqref{Q infinity for N1}, \eqref{Q infinity for N2} and \eqref{Q infinity for N3} coincide, the resulting formula is rather simple and is given by the following Theorem.

\begin{theorem} \label{form of Q infinity - theorem}
There is a unique invariant measure $\mu_{\infty}^{(a,b)} = N \left( 0, Q_{\infty}^{(a,b)} \right)$ for the equation \eqref{linear equation with parameters} and
$$
w^* - \lim_{t \rightarrow \infty} \mu_t^{x_0} = \mu_{\infty}^{(a,b)}
$$
for each initial condition $x_0 \in V$. The covariance operator $Q_{\infty}^{(a,b)}$ takes the form
\begin{align}
Q_{\infty}^{(a,b)} \left( \begin{array}{c}
x_1\\
x_2
\end{array} \right) &= \sum_{n=1}^{\infty} \sum_{k=1}^{\infty} \frac{\left\langle Qe_n, e_k \right\rangle_{L^2(D)}}{b^2 (\alpha_n - \alpha_k)^2 + 8a^2 b (\alpha_n + \alpha_k)} \times \\
&\left( \begin{array}{c}
4a \alpha_n \left\langle x_1, e_n \right\rangle_{L^2(D)} e_k + b(\alpha_k - \alpha_n) \left\langle x_2, e_n \right\rangle_{L^2(D)} e_k\\
b \alpha_n(\alpha_n - \alpha_k) \left\langle x_1, e_n \right\rangle_{L^2(D)} e_k + 2ab (\alpha_n + \alpha_k) \left\langle x_2, e_n \right\rangle_{L^2(D)} e_k
\end{array} \right), \label{form of Q infinity general case}
\end{align}
for any $(x_1, x_2)^{\top} \in V.$
\end{theorem}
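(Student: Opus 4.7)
The theorem has two assertions, and I would treat them separately. For the existence, uniqueness and weak convergence of the invariant measure, I plan to invoke Proposition \ref{existence of invariant measure} directly. This requires checking hypotheses (A1) and (A2): (A1) is immediate because $\Phi_1$ is assumed Hilbert--Schmidt on $L^2(D)$, and the lifted operator $\Phi = \left(\begin{smallmatrix} 0 & 0 \\ 0 & \Phi_1\end{smallmatrix}\right)$ inherits the Hilbert--Schmidt property on $V$ (as explicitly remarked below the definition of $\Phi$); (A2) is the exponential stability of $(S(t), t \geq 0)$, which was already established at the end of Theorem \ref{form of S}. Proposition \ref{existence of invariant measure} then yields the unique invariant measure $\mu_\infty^{(a,b)} = N(0, Q_\infty^{(a,b)})$ and the weak-* convergence $\mu_t^{x_0} \to \mu_\infty^{(a,b)}$ with no further work.

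For the explicit formula, my strategy is to exploit the orthogonal decomposition $V = V_1 \oplus V_2 \oplus V_3$. By Theorem \ref{form of S} the semigroup splits as $S(t) = S_1(t) P_{V_1} + S_2(t) P_{V_2} + S_3(t) P_{V_3}$, each subspace $V_i$ is invariant under both $S(t)$ and $S^*(t)$, and the same is true for the adjoint splitting. Substituting this block structure into the identity $Q_\infty^{(a,b)} = \int_0^{\infty} S(t) \Phi \Phi^* S^*(t)\, dt$ from \eqref{qinfty} reduces the computation to the three diagonal pieces $Q_{\infty, i}^{(a,b)} = \int_0^{\infty} S_i(t) \Phi \Phi^* S_i^*(t)\, dt$, which have already been computed in Lemma \ref{form of Q infinity}, Lemma \ref{form of Q infinity second case}, and Lemma \ref{form of Q infinity third case}.

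The combinatorial step is then to recognize that formulas \eqref{Q infinity for N1}, \eqref{Q infinity for N2}, and \eqref{Q infinity for N3} all share the same denominator $b^2(\alpha_n - \alpha_k)^2 + 8a^2 b(\alpha_n + \alpha_k)$ and the same shape of numerator in terms of $\alpha_n$, $\alpha_k$, $\langle x_j, e_n\rangle$ and $\langle Q e_n, e_k\rangle$; they differ only in the index set of summation. The remark following Lemma \ref{form of Q infinity third case} verifies that \eqref{Q infinity for N3} is exactly the degenerate case $\alpha_n = \alpha_k = a^2/b$ of the common template. Adding the three block contributions thus merges the sums over $N_1$, $N_2$ and $N_3$ into a single double sum over $N_1 \cup N_2 \cup N_3 = \mathbb{N}$, producing \eqref{form of Q infinity general case}.

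The step I expect to require the most care is the clean justification of the decomposition $Q_\infty^{(a,b)} = Q_{\infty, 1}^{(a,b)} + Q_{\infty, 2}^{(a,b)} + Q_{\infty, 3}^{(a,b)}$, because $\Phi \Phi^*$ is not a priori block-diagonal with respect to $V_1 \oplus V_2 \oplus V_3$ unless $Q$ shares eigenvectors with $A$. One therefore has to verify that the off-diagonal integrals $\int_0^\infty S_i(t) \Phi \Phi^* S_j^*(t)\, dt$ for $i \neq j$ either are reabsorbed into the same universal formula with indices $n \in N_i$, $k \in N_j$, or vanish under the setup of interest. This amounts to repeating the trigonometric/exponential integral computations that appear in the proofs of Lemmas \ref{form of Q infinity}--\ref{form of Q infinity third case} for mixed $(n,k)$ pairs, noting that the denominator $b^2(\alpha_n - \alpha_k)^2 + 8a^2 b(\alpha_n + \alpha_k)$ is produced by the same algebra regardless of which $N_i$ each index belongs to. Once this consistency is checked, the three block formulas assemble into \eqref{form of Q infinity general case} with indices ranging over all of $\mathbb{N}$.
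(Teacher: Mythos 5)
Your proposal is correct and follows essentially the same route as the paper, whose printed proof consists only of citing Proposition \ref{existence of invariant measure} for the first assertion and Lemmas \ref{form of Q infinity}, \ref{form of Q infinity second case} and \ref{form of Q infinity third case} for the formula. The one point you flag as delicate --- the off-diagonal integrals $\int_0^\infty S_i(t)\Phi\Phi^* S_j^*(t)\,dt$ for $i\neq j$, which genuinely contribute whenever $\left\langle Qe_n,e_k\right\rangle_{L^2(D)}\neq 0$ for $n\in N_i$, $k\in N_j$ --- is real and is passed over in silence by the paper, since the three lemmas only record the sums with both indices in the same $N_i$; it is resolved exactly as you suspect, because the elementary integrals for mixed pairs (for instance $\int_0^\infty e^{-2at}\,\frac{\sin(\omega_n t)}{\omega_n}\,\frac{\sinh(\mu_k t)}{\mu_k}\,dt$ with $\omega_n^2=b\alpha_n-a^2$ and $\mu_k^2=a^2-b\alpha_k$) are analytic continuations in $\alpha_n,\alpha_k$ of the diagonal ones, converge since $\mu_k<a$, and yield the same rational expression with denominator $b^2(\alpha_n-\alpha_k)^2+8a^2b(\alpha_n+\alpha_k)$. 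In this respect your outline is, if anything, more complete than the proof given in the paper.
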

\begin{proof}
The existence of invariant measure $\mu_{\infty}^{(a,b)}$ is given by Proposition \ref{existence of invariant measure}. The formula for the covariance operator $Q_{\infty}^{(a,b)}$ follows from Lemmas \ref{form of Q infinity}, \ref{form of Q infinity second case} and \ref{form of Q infinity third case}.
\end{proof}

\section{Parameter estimation} \label{parameter estimation}
Consider the stochastic differential equation \eqref{linear equation with parameters} with the parameters $a > 0$, $b > 0$ unknown. Our goal is to propose strongly consistent estimators of these parameters based on observation of the trajectory of the process $(X^{x_0}(t), 0 \leq t \leq T)$ up to time $T$.

Since the linear differential equation \eqref{linear equation with parameters} has unique invariant measure $\mu_{\infty}^{(a,b)}$, we may use the following ergodic theorem for arbitrary solution (see \cite{maslowskipospisil}, Theorem 4.9.).

\begin{theorem} \label{ergodic theorem}
Let $(X^{x_0}, t \geq 0)$ be a solution to \eqref{linear equation with parameters} with $\Phi \in \mathcal L_2(U,V)$. Let $\varrho : V \rightarrow \mathbb R$ be a functional satisfying the following local Lipschitz condition: let there exist real constants $K > 0$ and $m \geq 0$ such that
\begin{equation}
|\varrho (x) - \varrho (y)| \leq K \| x - y \|_V \left( 1 + \| x \|_V^m + \| y \|_V^m \right)
\end{equation}
for all $x, y \in V$. Then
\begin{equation}
\lim_{T \rightarrow \infty} \frac{1}{T} \int_0^T \varrho \left( X^{x_0}(t) \right) \, dt = \int_V \varrho (y) \, \mu_{\infty} (dy), \quad \mathbb P-a.s.
\end{equation}
for all $x_0 \in V$.
\end{theorem}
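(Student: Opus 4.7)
The plan is to reduce to the stationary case and then to a classical Birkhoff ergodic argument for the Ornstein--Uhlenbeck semigroup on $V$. Denote by $(P_t, t \geq 0)$ the Markov transition semigroup associated to \eqref{linear equation with parameters}, defined by $(P_t f)(x) = \mathbb{E}\, f(X^x(t))$ for bounded measurable $f$. Since $\mu_{\infty}$ is the unique invariant measure (by Proposition \ref{existence of invariant measure}) and the semigroup of laws $\mu_t^{x_0}$ converges weakly to $\mu_{\infty}$ for every $x_0$, the measure $\mu_{\infty}$ is automatically ergodic for $(P_t)$: indeed, unique invariance for a Feller Markov semigroup on a Polish space is equivalent to ergodicity of the unique invariant measure.

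Once ergodicity of $\mu_{\infty}$ is established, the continuous-time Birkhoff ergodic theorem applied to the stationary solution $(X^{\star}(t), t \geq 0)$ with $\mathrm{Law}(X^{\star}(0)) = \mu_{\infty}$ gives, for every $\varrho \in L^{1}(\mu_{\infty})$,
\begin{equation}
\lim_{T \to \infty} \frac{1}{T}\int_{0}^{T} \varrho(X^{\star}(t)) \, dt = \int_{V} \varrho(y)\, \mu_{\infty}(dy), \quad \mathbb{P}\text{-a.s.}
\end{equation}
The local Lipschitz/polynomial-growth condition together with the fact that $\mu_{\infty}$ is Gaussian (hence has finite moments of every order on $V$) guarantees $\varrho \in L^{p}(\mu_{\infty})$ for all $p \geq 1$, so the Birkhoff conclusion does apply to our $\varrho$.

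The remaining step is to replace the stationary initial datum by an arbitrary $x_0 \in V$. Here I would use the decomposition $X^{x_{0}}(t) = S(t)x_{0} + Z(t)$ together with the stationary version $\tilde Z(t) = \int_{-\infty}^{t} S(t-u)\Phi\, dB(u)$ of the stochastic convolution (which is well defined by (A1)--(A2)); then $X^{\star}(t) := \tilde Z(t)$ is a stationary solution and
\begin{equation}
\| X^{x_{0}}(t) - X^{\star}(t) \|_{V} \leq \| S(t) x_{0} \|_{V} + \| Z(t) - \tilde Z(t) \|_{V}
\end{equation}
decays exponentially in $L^{2}(\Omega)$ by (A2). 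Using the local Lipschitz hypothesis,
\begin{equation}
\left| \varrho(X^{x_{0}}(t)) - \varrho(X^{\star}(t)) \right| \leq K \| X^{x_{0}}(t) - X^{\star}(t) \|_{V} \bigl( 1 + \| X^{x_{0}}(t) \|_{V}^{m} + \| X^{\star}(t) \|_{V}^{m} \bigr),
\end{equation}
and Cauchy--Schwarz plus uniform-in-$t$ moment bounds on $\| X^{x_{0}}(t) \|_{V}$ and $\| X^{\star}(t) \|_{V}$ (again from (A1)--(A2) and Gaussianity) imply that the integral $\frac{1}{T}\int_{0}^{T} \bigl( \varrho(X^{x_{0}}(t)) - \varrho(X^{\star}(t)) \bigr) dt$ tends to $0$ in $L^{1}(\Omega)$. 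A standard Borel--Cantelli / subsequence argument upgrades this to the almost-sure convergence required to combine with the Birkhoff limit for $X^{\star}$.

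The main obstacle I expect is the passage from bounded Lipschitz test functions (for which Birkhoff and the coupling argument are straightforward) to the polynomial-growth class: one has to justify that the moment control on $\| X^{x_{0}}(t) \|_{V}$ is uniform in $t \geq 0$, and to control the error between $X^{x_{0}}$ and $X^{\star}$ pathwise rather than merely in law. These are exactly the points where (A1), (A2) and the Gaussian structure of the stochastic convolution enter decisively.
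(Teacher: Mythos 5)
The paper does not actually prove this statement: it is imported verbatim from \cite{maslowskipospisil} (Theorem 4.9 there), so there is no internal argument to compare yours against line by line. Your sketch is a legitimate self-contained route, and it is essentially the standard one: reduce to the strictly stationary solution, invoke ergodicity of the unique invariant measure, apply Birkhoff, and control the discrepancy coming from the initial condition. One structural remark: because the noise here is a standard Wiener process, you may use the Markov--Feller fact that a unique invariant measure is automatically ergodic, which is exactly the shortcut you take; the cited reference works with fractional noise, where the process is not Markov and ergodicity of the stationary Gaussian solution has to be obtained from decay of the covariance instead. So your argument is, if anything, more elementary in the present setting.

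The one step I would not let stand as written is the final upgrade from $L^1(\Omega)$-convergence of $\frac{1}{T}\int_0^T\bigl(\varrho(X^{x_0}(t))-\varrho(X^{\star}(t))\bigr)\,dt$ to almost-sure convergence. Borel--Cantelli along a subsequence $T_n$ does not by itself give the full limit for a signed integrand; you would still have to control the oscillation between $T_n$ and $T_{n+1}$. Fortunately the detour through $L^1$ is unnecessary: with $X^{\star}(t)=\int_{-\infty}^{t}S(t-u)\Phi\,dB(u)$ one has the pathwise identity $X^{x_0}(t)-X^{\star}(t)=S(t)\bigl(x_0-X^{\star}(0)\bigr)$, hence $\|X^{x_0}(t)-X^{\star}(t)\|_V\leq Ke^{-\rho t}\|x_0-X^{\star}(0)\|_V$ almost surely. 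Feeding this into your local Lipschitz estimate bounds the error by a constant multiple of $e^{-\rho t}\bigl(1+\|X^{\star}(t)\|_V^{m}\bigr)\|x_0-X^{\star}(0)\|_V$ (up to harmless terms), and $\int_0^{\infty}e^{-\rho t}\bigl(1+\|X^{\star}(t)\|_V^{m}\bigr)\,dt$ is almost surely finite because its expectation is finite by stationarity and Fernique. Thus $\frac{1}{T}\int_0^T\bigl|\varrho(X^{x_0}(t))-\varrho(X^{\star}(t))\bigr|\,dt\leq C(\omega)/T\rightarrow 0$ almost surely, which closes the argument cleanly. With that replacement (and a word about enlarging the probability space to carry the two-sided Brownian motion defining $X^{\star}$), your plan yields a complete proof.
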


We will be specifically interested in a functional $\varrho : V \rightarrow \mathbb R$, $\varrho (y) = \| y \|_V^2$, $y \in V$. Then all the conditions of above Theorem are satisfied with $m=1$ and
\begin{align}
\lim_{T \rightarrow \infty} \frac{1}{T} \int_0^T \varrho \left( X^{x_0}(t) \right) \, dt &= \lim_{T \rightarrow \infty} \frac{1}{T} \int_0^T \| X^{x_0}(t) \|_V^2 \, dt \notag \\
&= \int_V \| y \|_V^2 \, \mu_{\infty}^{(a,b)} (dy) \notag \\
&= \text{Tr} \, Q_{\infty}^{(a,b)}, \label{ergodic property}
\end{align}
where $\text{Tr } (\cdot)$ denotes the trace of the (nuclear) operator. Hence we first introduce the trace of the operator $Q_{\infty}^{(a,b)}$.
\begin{lemma} \label{trace of Q infinity}
Trace of the nuclear operator $Q_{\infty}^{(a,b)}$ takes the form
\begin{align}
\mathrm{Tr} \, Q_{\infty}^{(a,b)} &= \frac{1}{4ab} \sum_{n=1}^{\infty} \lambda_n + \frac{1}{4a} \sum_{n=1}^{\infty} \lambda_n \label{form of Q infinity splitted} \\
&= \frac{b+1}{4ab} \, \mathrm{Tr} \, Q. \label{form of Q infinity result}
\end{align}
\end{lemma}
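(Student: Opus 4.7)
The strategy is to compute the trace by summing diagonal entries against an orthonormal basis of $V$ chosen so that the double sum in Theorem \ref{form of Q infinity - theorem} collapses onto the diagonal $n=k$, which eliminates the $(\alpha_n-\alpha_k)^2$ cross terms and trivializes the denominators.

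First I would fix the orthonormal basis of $V$ given by $\{(f_n,0)^\top : n\in\mathbb N\}\cup\{(0,e_n)^\top : n\in\mathbb N\}$, where $f_n=\alpha_n^{-1/2}e_n$. Orthonormality follows immediately from the definition of $\langle\cdot,\cdot\rangle_V$: indeed $\langle f_n,f_m\rangle_{\mathrm{Dom}((-A)^{1/2})}=\sqrt{\alpha_n\alpha_m}\,\langle f_n,f_m\rangle_{L^2(D)}=\delta_{nm}$ and the two families are clearly $V$-orthogonal. Since $Q_\infty^{(a,b)}\in\mathcal L_1(V)$, its trace equals the sum of the diagonal matrix entries in this basis.

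Next I would plug $(x_1,x_2)^\top=(f_n,0)^\top$ into the formula \eqref{form of Q infinity general case}. Here $\langle x_1,e_m\rangle_{L^2(D)}=\alpha_n^{-1/2}\delta_{nm}$ and $\langle x_2,e_m\rangle=0$, so only the index $m=n$ survives in the outer sum; taking the $V$-inner product with $(f_n,0)^\top$ applies $(-A)^{1/2}$ to the first component and pairs it with $e_n$, which forces $k=n$ as well. The denominator becomes $16a^2b\alpha_n$, the numerator becomes $4a\alpha_n\langle Qe_n,e_n\rangle$ (after the factor $\sqrt{\alpha_k}/\sqrt{\alpha_n}$ contributed by $(-A)^{1/2}$ equals $1$ on the diagonal), yielding
\begin{equation}
\langle Q_\infty^{(a,b)}(f_n,0)^\top,(f_n,0)^\top\rangle_V=\frac{\langle Qe_n,e_n\rangle_{L^2(D)}}{4ab}.\notag
\end{equation}
An analogous calculation with $(0,e_n)^\top$ picks only the second component; again the diagonal term $m=k=n$ survives and the numerator $2ab(2\alpha_n)$ over $16a^2b\alpha_n$ gives
\begin{equation}
\langle Q_\infty^{(a,b)}(0,e_n)^\top,(0,e_n)^\top\rangle_V=\frac{\langle Qe_n,e_n\rangle_{L^2(D)}}{4a}.\notag
\end{equation}

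Finally, summing over $n$ gives the two-term decomposition \eqref{form of Q infinity splitted}, and I would finish by noting that because trace is basis-independent, $\sum_{n=1}^\infty\langle Qe_n,e_n\rangle_{L^2(D)}=\mathrm{Tr}\,Q=\sum_{n=1}^\infty\lambda_n$, whence \eqref{form of Q infinity result} follows by combining fractions. The only real care needed is bookkeeping: checking that the off-diagonal terms vanish once the $V$-inner product absorbs the $(-A)^{1/2}$ factor on the first coordinate; this is where one must not confuse the basis $\{f_n\}$ of $\mathrm{Dom}((-A)^{1/2})$ with the basis $\{e_n\}$ of $L^2(D)$.
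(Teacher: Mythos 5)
Your proposal is correct and follows essentially the same route as the paper: computing the trace against the orthonormal basis $\{(f_n,0)^\top\}\cup\{(0,e_n)^\top\}$ of $V$, observing that the Kronecker deltas collapse the double sum in \eqref{form of Q infinity general case} onto $n=k$ with denominator $16a^2b\alpha_n$, and invoking basis-independence of the trace to identify $\sum_n\langle Qe_n,e_n\rangle_{L^2(D)}$ with $\mathrm{Tr}\,Q=\sum_n\lambda_n$. The diagonal values $\frac{1}{4ab}\langle Qe_n,e_n\rangle$ and $\frac{1}{4a}\langle Qe_n,e_n\rangle$ you obtain match the paper's computation exactly.
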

\begin{proof}
According to the definition of the trace
$$
\text{Tr} \, Q_{\infty}^{(a,b)} = \sum_{j=1}^{\infty} \left\langle Q_{\infty}^{(a,b)} \left( \begin{array}{c} f_j \\ 0 \end{array} \right), \left( \begin{array}{c} f_j \\ 0 \end{array} \right) \right\rangle_V + \sum_{j=1}^{\infty} \left\langle Q_{\infty}^{(a,b)} \left( \begin{array}{c} 0 \\ e_j \end{array} \right), \left( \begin{array}{c} 0 \\ e_j \end{array} \right) \right\rangle_V.
$$

With \eqref{form of Q infinity general case} in mind, we start with the summand of the first sum
\begin{align}
&\left\langle Q_{\infty}^{(a,b)} \left( \begin{array}{c} f_j \\ 0 \end{array} \right), \left( \begin{array}{c} f_j \\ 0 \end{array} \right) \right\rangle_V = \notag \\
&= \left\langle \sum_{n=1}^{\infty} \sum_{k=1}^{\infty} \frac{\left\langle Qe_n, e_k \right\rangle_{L^2(D)}}{b^2 (\alpha_n - \alpha_k)^2 + 8a^2 b (\alpha_n + \alpha_k)} \left( \begin{array}{c} 4a \alpha_n \left\langle f_j, e_n \right\rangle_{L^2(D)} e_k \\ b \alpha_n(\alpha_n - \alpha_k) \left\langle f_j, e_n \right\rangle_{L^2(D)} e_k \end{array} \right), \left( \begin{array}{c} f_j \\ 0 \end{array} \right) \right\rangle_V \notag \\
&= \sum_{n=1}^{\infty} \sum_{k=1}^{\infty} \frac{4a \alpha_n \left\langle Qe_n, e_k \right\rangle_{L^2(D)}}{b^2 (\alpha_n - \alpha_k)^2 + 8a^2 b (\alpha_n + \alpha_k)} \left\langle f_j, e_n \right\rangle_{L^2(D)} \left\langle f_j, e_k \right\rangle_{\text{Dom} (-A)^{\frac{1}{2}}}. \notag
\end{align}

Since
\begin{align}
\left\langle f_j, e_n \right\rangle_{L^2(D)} &= \frac{1}{\sqrt{\alpha_j}} \delta_{j,n}, \notag \\
\left\langle f_j, e_k \right\rangle_{\text{Dom} (-A)^{\frac{1}{2}}} &= \sqrt{\alpha_k} \delta_{j,k}, \notag
\end{align}
where $\delta$ stands for the Kronecker's delta, there is only one nonzero summand, which corresponds to $n=k=j$, so we arrive at
$$
\frac{1}{4ab} \left\langle Qe_j, e_j \right\rangle_{L^2(D)}.
$$

If we sum up these terms over $j$, we will obtain the first term on the right--hand side of \eqref{form of Q infinity splitted}, that is
$$
\frac{1}{4ab} \sum_{j=1}^{\infty} \left\langle Qe_j, e_j \right\rangle_{L^2(D)} = \frac{1}{4ab} \sum_{j=1}^{\infty} \lambda_j.
$$

Note that
$$
\text{Tr} \, Q = \sum_{j=1}^{\infty} \left\langle Qe^{\prime}_j, e^{\prime}_j \right\rangle_{L^2(D)} = \sum_{j=1}^{\infty} \lambda_j = \sum_{j=1}^{\infty} \left\langle Qe_j, e_j \right\rangle_{L^2(D)},
$$
where the last equality follows from the fact that the definition of the trace does not depend on the choice of orthonormal basis of $L^2(D)$.

In a similar fashion, we compute the summand of the second sum
\begin{align}
&\left\langle Q_{\infty}^{(a,b)} \left( \begin{array}{c} 0 \\ e_j \end{array} \right), \left( \begin{array}{c} 0 \\ e_j \end{array} \right) \right\rangle_V \notag = \\
&= \left\langle \sum_{n=1}^{\infty} \sum_{k=1}^{\infty} \frac{\left\langle Qe_n, e_k \right\rangle_{L^2(D)}}{b^2 (\alpha_n - \alpha_k)^2 + 8a^2 b (\alpha_n + \alpha_k)} \left( \begin{array}{c} b(\alpha_k - \alpha_n) \left\langle e_j, e_n \right\rangle_{L^2(D)} e_k \\ 2ab (\alpha_n + \alpha_k) \left\langle e_j, e_n \right\rangle_{L^2(D)} e_k \end{array} \right), \left( \begin{array}{c} 0 \\ e_j \end{array} \right) \right\rangle_V \notag \\
&= \sum_{n=1}^{\infty} \sum_{k=1}^{\infty} \frac{2ab (\alpha_n + \alpha_k) \left\langle Qe_n, e_k \right\rangle_{L^2(D)}}{b^2 (\alpha_n - \alpha_k)^2 + 8a^2 b (\alpha_n + \alpha_k)} \left\langle e_j, e_n \right\rangle_{L^2(D)} \left\langle e_j, e_k \right\rangle_{L^2(D)} \notag \\
&= \frac{1}{4a} \left\langle Qe_j, e_j \right\rangle_{L^2(D)}. \notag
\end{align}

If we sum up these terms over $j$, we will obtain the second term on the right--hand side of \eqref{form of Q infinity splitted} of the trace, that is
$$
\frac{1}{4a} \sum_{j=1}^{\infty} \left\langle Qe_j, e_j \right\rangle_{L^2(D)} = \frac{1}{4a} \sum_{j=1}^{\infty} \lambda_j.
$$
\end{proof}

Based on above Lemma and Theorem \ref{ergodic theorem}, strongly consistent estimators of parameters $a$ and $b$ may be proposed now.
\begin{theorem} \label{strong consistency of a hat and b hat}
If we set
\begin{equation} \label{definition of IT}
I_T = \frac{1}{T} \int_0^T \| X^{x_0}(t) \|_V^2 \, dt,
\end{equation}
then the processes
\begin{align}
\hat{a}_T &= \frac{b+1}{4b I_T} \, \mathrm{Tr} \, Q, \label{a hat} \\
\hat{b}_T &= \frac{\mathrm{Tr} \, Q}{4a I_T - \mathrm{Tr} \, Q} \label{b hat}
\end{align}
are strongly constistent estimators of the parameters $a$ and $b$, respectively, that is $\hat{a}_T \rightarrow a$, $\hat{b}_T \rightarrow b$, $\mathbb P-a.s.$ as $T \rightarrow \infty$.
\end{theorem}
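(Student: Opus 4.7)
The plan is to combine the ergodic theorem (Theorem \ref{ergodic theorem}) with the explicit trace formula of Lemma \ref{trace of Q infinity}; the estimators are then essentially just rearrangements of the asymptotic identity $I_T \to \mathrm{Tr}\, Q_\infty^{(a,b)}$.

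First I would verify that $\varrho(y) = \|y\|_V^2$ satisfies the local Lipschitz hypothesis of Theorem \ref{ergodic theorem}. This is routine: the factorization
\[
\bigl|\,\|x\|_V^2 - \|y\|_V^2\,\bigr| = \bigl|\,\|x\|_V - \|y\|_V\,\bigr|\cdot\bigl(\|x\|_V + \|y\|_V\bigr) \leq \|x-y\|_V \bigl(\|x\|_V + \|y\|_V\bigr)
\]
yields the required estimate with $K=1$ and $m=1$. Theorem \ref{ergodic theorem} then gives
\[
\lim_{T\to\infty} I_T = \int_V \|y\|_V^2\, \mu_\infty^{(a,b)}(dy) \qquad \mathbb{P}\text{-a.s.}
\]
Since $\mu_\infty^{(a,b)} = N(0, Q_\infty^{(a,b)})$ is a centered Gaussian measure on $V$, the right-hand side equals $\mathrm{Tr}\, Q_\infty^{(a,b)}$ (this is the standard identity $\mathbb{E}\|Y\|_V^2 = \mathrm{Tr}\,\mathrm{Cov}(Y)$ for Gaussian vectors in a Hilbert space, applicable because $Q_\infty^{(a,b)}$ is nuclear by Proposition \ref{existence of invariant measure}).

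Next I invoke Lemma \ref{trace of Q infinity} to compute
\[
\mathrm{Tr}\, Q_\infty^{(a,b)} = \frac{b+1}{4ab}\,\mathrm{Tr}\, Q,
\]
so that $I_T \to \frac{b+1}{4ab}\,\mathrm{Tr}\, Q$ almost surely as $T\to\infty$. Substituting this limit into the definition \eqref{a hat} gives
\[
\hat{a}_T = \frac{(b+1)\,\mathrm{Tr}\, Q}{4b\, I_T} \;\longrightarrow\; \frac{(b+1)\,\mathrm{Tr}\, Q}{4b}\cdot\frac{4ab}{(b+1)\,\mathrm{Tr}\, Q} = a,
\]
and substituting into \eqref{b hat} gives
\[
\hat{b}_T = \frac{\mathrm{Tr}\, Q}{4a\,I_T - \mathrm{Tr}\, Q} \;\longrightarrow\; \frac{\mathrm{Tr}\, Q}{\frac{b+1}{b}\,\mathrm{Tr}\, Q - \mathrm{Tr}\, Q} = \frac{1}{\frac{b+1}{b}-1} = b,
\]
both convergences being $\mathbb{P}$-a.s. (note $4a I_T - \mathrm{Tr}\, Q \to \frac{1}{b}\mathrm{Tr}\, Q > 0$, so the denominator is eventually bounded away from zero almost surely, which justifies taking the limit inside the quotient).

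There is no real obstacle in this argument — it is a direct synthesis of Theorem \ref{ergodic theorem} and Lemma \ref{trace of Q infinity}. The only subtle point worth flagging is the justification that $I_T$ and $4aI_T - \mathrm{Tr}\, Q$ are, with probability one, eventually nonzero so that the quotients defining $\hat{a}_T$ and $\hat{b}_T$ make sense for all sufficiently large $T$; this follows immediately from the positivity of the a.s.\ limit $\frac{b+1}{4ab}\,\mathrm{Tr}\, Q$.
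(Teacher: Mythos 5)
Your proposal is correct and takes essentially the same route as the paper: the paper's proof likewise combines the ergodic identity \eqref{ergodic property} with the trace formula \eqref{form of Q infinity result} to get $I_T \to \frac{b+1}{4ab}\,\mathrm{Tr}\,Q$ $\mathbb{P}$-a.s.\ and then reads off the limits of $\hat{a}_T$ and $\hat{b}_T$. You merely spell out two details the paper leaves implicit (the local Lipschitz verification for $\varrho(y)=\|y\|_V^2$ and the eventual nonvanishing of the denominators), both of which are accurate.
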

\begin{proof}
From \eqref{ergodic property} and \eqref{form of Q infinity result}, it follows that
$$
\lim_{T \rightarrow \infty} I_T = \frac{b+1}{4ab} \, \text{Tr} \, Q, \quad \mathbb P-a.s.
$$

Hence we obtain the desired limits $\hat{a}_T \rightarrow a$, $\hat{b}_T \rightarrow b$, $\mathbb P-a.s.$ as $T \rightarrow \infty$.
\end{proof}

\begin{remark}
The estimators $\hat{a}_T$ and $\hat{b}_T$ may be easily implemented, but they have one major disadvantage: We need to know the true value of the other parameter. In order to compute the estimator $\hat{a}_T$, we need to know not only the quantity $I_T$ (which can be computed from the observation of the trajectory of the process $(X^{x_0}(t), 0 \leq t \leq T)$), the trace of the operator $Q$ (which is supposed to be given by the model), but we also need to know the true value of the parameter $b$. (And similarly for the estimator $\hat{b}_T$.) Nevertheless, we believe these estimators may be useful in the situations, when we are estimating only one of the parameters and the other is known.
\end{remark}

However another family of estimators $(\tilde{a}_T, \tilde{b}_T)$ is proposed now, which does not possess this disadvantage. Since
$$
\| x \|_V^2 = \| x_1 \|_{\text{Dom}(-A)^{\frac{1}{2}}}^2 + \| x_2 \|_{L^2(D)}^2, \quad \forall x = (x_1, x_2)^{\top} \in V,
$$
the integral in \eqref{definition of IT} may be split into two parts
\begin{align}
I_T &= \frac{1}{T} \int_0^T \| X^{x_0}(t) \|_V^2 \, dt \notag \\
&= \frac{1}{T} \int_0^T \| X_1^{x_0}(t) \|_{\text{Dom}(-A)^{\frac{1}{2}}}^2 \, dt + \frac{1}{T} \int_0^T \| X_2^{x_0}(t) \|_{L^2(D)}^2 \, dt \notag \\
&=: Y_T + H_T, \label{definition of YT and HT}
\end{align}
where $X^{x_0}(t) = ( X_1^{x_0}(t), X_2^{x_0}(t))^{\top} \in V$ is the solution to the equation \eqref{linear equation with parameters}.

From the proof of Lemma \ref{trace of Q infinity} (and also from the formula \eqref{form of Q infinity splitted}), it is easy to see, that the $\text{Tr} \, Q_{\infty}^{(a,b)}$ may be also split into two parts. In the following Theorem, we show that these parts converge individually to their corresponding limits and based on this convergence, we may introduce new family of estimators $\tilde{a}_T$ and $\tilde{b}_T$.

\begin{theorem} \label{strong consistency of a tilde and b tilde}
The estimators
\begin{align}
\tilde{a}_T &= \frac{\mathrm{Tr} \, Q}{4 H_T}, \label {a tilde} \\
\tilde{b}_T &= \frac{H_T}{Y_T} \label{b tilde}
\end{align}
are strongly constistent estimators of the parameters $a$ and $b$, respectively.
\end{theorem}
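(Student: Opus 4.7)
The plan is to apply the ergodic theorem (Theorem \ref{ergodic theorem}) separately to each of the two pieces of $\| X^{x_0}(t) \|_V^2 = \| X^{x_0}_1(t) \|_{\mathrm{Dom}((-A)^{1/2})}^2 + \| X^{x_0}_2(t) \|_{L^2(D)}^2$, rather than to their sum. Concretely, I would introduce the two functionals $\varrho_1, \varrho_2 : V \to \mathbb{R}$ given by $\varrho_1(y) = \| y_1 \|_{\mathrm{Dom}((-A)^{1/2})}^2$ and $\varrho_2(y) = \| y_2 \|_{L^2(D)}^2$ for $y = (y_1, y_2)^\top \in V$. Each of these is bounded above by $\| y \|_V^2$ and is locally Lipschitz in the same way that $\| \cdot \|_V^2$ is, so the hypothesis of Theorem \ref{ergodic theorem} is satisfied with $m = 1$ for both.

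The next step is to evaluate the two integrals $\int_V \varrho_i(y)\, \mu_\infty^{(a,b)}(dy)$ for $i = 1, 2$. This is exactly the computation that was already carried out inside the proof of Lemma \ref{trace of Q infinity}: the first sum there, $\sum_{j=1}^\infty \langle Q_\infty^{(a,b)} (f_j, 0)^\top, (f_j, 0)^\top \rangle_V = \frac{1}{4ab}\,\mathrm{Tr}\, Q$, is precisely $\int_V \varrho_1\, d\mu_\infty^{(a,b)}$, while the second sum, $\sum_{j=1}^\infty \langle Q_\infty^{(a,b)} (0, e_j)^\top, (0, e_j)^\top \rangle_V = \frac{1}{4a}\,\mathrm{Tr}\, Q$, is $\int_V \varrho_2\, d\mu_\infty^{(a,b)}$. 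Thus the ergodic theorem yields
\begin{align}
Y_T \;\longrightarrow\; \frac{1}{4ab}\, \mathrm{Tr}\, Q, \qquad H_T \;\longrightarrow\; \frac{1}{4a}\, \mathrm{Tr}\, Q, \qquad \mathbb P\text{-a.s.},
\end{align}
as $T \to \infty$.

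Given these two limits, the conclusions are immediate from the continuous mapping theorem, noting that both limits are strictly positive (since $Q$ is strictly positive so $\mathrm{Tr}\, Q > 0$). For $\tilde{a}_T$,
\begin{align}
\tilde{a}_T = \frac{\mathrm{Tr}\, Q}{4 H_T} \;\longrightarrow\; \frac{\mathrm{Tr}\, Q}{4 \cdot \frac{1}{4a}\, \mathrm{Tr}\, Q} = a,
\end{align}
and for $\tilde{b}_T$,
\begin{align}
\tilde{b}_T = \frac{H_T}{Y_T} \;\longrightarrow\; \frac{\frac{1}{4a}\, \mathrm{Tr}\, Q}{\frac{1}{4ab}\, \mathrm{Tr}\, Q} = b,
\end{align}
both $\mathbb P$-a.s.

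There is no real obstacle here: the key observation is simply that the decomposition of the trace in \eqref{form of Q infinity splitted} is the \emph{statistical} manifestation of the decomposition of the $V$-norm into its two natural components, and by separately averaging the two components of the trajectory we can decouple the two parameters. The only minor point worth checking is the local Lipschitz bound for $\varrho_1$ and $\varrho_2$, which is routine: both are squared norms of bounded linear projections of $y$, so $|\varrho_i(x) - \varrho_i(y)| \le (\|x\|_V + \|y\|_V)\|x-y\|_V$, fitting the hypothesis with $m = 1$.
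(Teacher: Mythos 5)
Your proposal is correct and follows essentially the same route as the paper: the paper's proof likewise applies Theorem \ref{ergodic theorem} to the two functionals $\varrho_1(y)=\|y_1\|^2_{\mathrm{Dom}((-A)^{1/2})}$ and $\varrho_2(y)=\|y_2\|^2_{L^2(D)}$, identifies the limits $\frac{1}{4ab}\,\mathrm{Tr}\,Q$ and $\frac{1}{4a}\,\mathrm{Tr}\,Q$ via the two marginals of $\mu_\infty^{(a,b)}$ (i.e.\ the split of the trace in \eqref{form of Q infinity splitted}), and then passes to the limit in the quotients. No gaps.
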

\begin{proof}
Consider the functional $\varrho_1 : V \rightarrow \mathbb R$, $\varrho_1 (y) = \| y_1 \|_{\text{Dom}(-A)^{\frac{1}{2}}}^2$, $y = (y_1, y_2)^{\top} \in V$. Then all the conditions of Theorem \ref{ergodic theorem} are satisfied with $m=1$ and
\begin{align}
\lim_{T \rightarrow \infty} \frac{1}{T} \int_0^T \varrho_1 \left( X^{x_0}(t) \right) \, dt &= \lim_{T \rightarrow \infty} \frac{1}{T} \int_0^T \| X_1^{x_0}(t) \|_{\text{Dom}(-A)^{\frac{1}{2}}}^2 \, dt \notag \\
&= \int_{\text{Dom}(-A)^{\frac{1}{2}}} \| y_1 \|_{\text{Dom}(-A)^{\frac{1}{2}}}^2 \, \mu_{\infty, 1}^{(a,b)} (dy_1) \notag \\
&= \frac{1}{4ab} \, \text{Tr} \, Q, \notag 
\end{align}
where $\mu_{\infty, 1}^{(a,b)}$ is the Gaussian measure with zero mean and covariance operator
$$
\sum_{n=1}^{\infty} \sum_{k=1}^{\infty} \frac{4a \alpha_n \left\langle Qe_n, e_k \right\rangle_{L^2(D)}}{b^2 (\alpha_n - \alpha_k)^2 + 8a^2 b (\alpha_n + \alpha_k)} \left\langle x, e_n \right\rangle_{L^2(D)} e_k, \quad \forall x \in \text{Dom}((-A)^{\frac{1}{2}}),
$$
so $\mu_{\infty, 1}^{(a,b)}$ is "the first marginal" of the measure $\mu_{\infty}^{(a,b)}$.

Hence $Y_T \rightarrow \frac{1}{4ab} \, \text{Tr} \, Q$ for $T \rightarrow \infty$.

Similarly consider the functional $\varrho_2 : V \rightarrow \mathbb R$, $\varrho_2 (y) = \| y_2 \|_{L^2(D)}^2$, $y = (y_1, y_2)^{\top} \in V$. Then all the conditions of Theorem \ref{ergodic theorem} are satisfied with $m=1$ and
\begin{align}
\lim_{T \rightarrow \infty} \frac{1}{T} \int_0^T \varrho_2 \left( X^{x_0}(t) \right) \, dt &= \lim_{T \rightarrow \infty} \frac{1}{T} \int_0^T \| X_2^{x_0}(t) \|_{L^2(D)}^2 \, dt \notag \\
&= \int_{L^2(D)} \| y_2 \|_{L^2(D)}^2 \, \mu_{\infty, 2}^{(a,b)} (dy_2) \notag \\
&= \frac{1}{4a} \, \text{Tr} \, Q, \notag 
\end{align}
where $\mu_{\infty, 2}^{(a,b)}$ is the Gaussian measure with zero mean and covariance operator
$$
\sum_{n=1}^{\infty} \sum_{k=1}^{\infty} \frac{2ab (\alpha_n + \alpha_k) \left\langle Qe_n, e_k \right\rangle_{L^2(D)}}{b^2 (\alpha_n - \alpha_k)^2 + 8a^2 b (\alpha_n + \alpha_k)} \left\langle x, e_n \right\rangle_{L^2(D)} e_k, \quad \forall x \in L^2(D),
$$
so it is "the second marginal" of the measure $\mu_{\infty}^{(a,b)}$.

Hence $H_T \rightarrow \frac{1}{4a} \, \text{Tr} \, Q$ for $T \rightarrow \infty$ and the convergence of $\tilde{a}_T$ to the true value of parameter $a$ follows. Similarly
$$
\tilde{b}_T = \frac{H_T}{Y_T} \rightarrow \frac{\frac{\text{Tr} \, Q}{4a}}{\frac{\text{Tr} \, Q}{4ab}} = b, \quad T \rightarrow \infty, \quad \mathbb P-a.s.
$$
\end{proof}

\section{Asymptotic normality of the estimators} \label{section: AN}
\subsection{Asymptotic normality of the estimators $\hat{a}_T$, $\hat{b}_T$}
In this section, we show asymptotic normality of estimators \eqref{a hat} and \eqref{b hat}, that is the weak convergences of $\mathrm{Law} \left( \sqrt{T} \left( \hat{a}_T - a \right) \right)$ and $\mathrm{Law} \left(\sqrt{T} \left( \hat{b}_T - b \right) \right)$ to Gaussian distributions. To this aim, define an operator $R: V \rightarrow V$ by
$$
Rx = R \left( \begin{array}{r}
x_1\\
x_2
\end{array} \right) = \left( \begin{array}{cc}
bI - \frac{4a^2}{b+1} A^{-1} & - \frac{2a}{b+1} A^{-1}\\
\frac{2a}{b+1}I & I
\end{array} \right) \left( \begin{array}{r}
x_1\\
x_2
\end{array} \right), \quad \forall x = \left( \begin{array}{r}
x_1\\
x_2
\end{array} \right) \in V.
$$

The properties of $R$ needed in the sequel are summarized in the following Lemma.

\begin{lemma}\label{properties of R}
The operator $R$ is a self--adjoint linear isomorphism of $V$. Moreover,
\begin{equation}\label{RxAx}
\left\langle Rx, \mathcal A x \right\rangle_V = - \frac{2ab}{b+1} \| x \|_V^2, \quad \forall x = \left( \begin{array}{r}
x_1\\
x_2
\end{array} \right) \in \mathrm{Dom}(\mathcal A).
\end{equation}
\end{lemma}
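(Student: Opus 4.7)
The plan is to verify the three assertions in succession: boundedness with invertibility, self-adjointness, and the quadratic identity \eqref{RxAx}. All three computations hinge on two bridge identities coming from the spectral setup of $A$: first, $\langle (-A)^{\frac{1}{2}} u, (-A)^{\frac{1}{2}} v \rangle_{L^2(D)} = -\langle Au, v \rangle_{L^2(D)}$ on the appropriate domains, and second, $(-A)^{\frac{1}{2}} A^{-1} = -(-A)^{-\frac{1}{2}}$, which makes terms such as $\langle (-A)^{\frac{1}{2}} A^{-1} x_1, (-A)^{\frac{1}{2}} y_1 \rangle_{L^2(D)}$ collapse to $-\langle x_1, y_1 \rangle_{L^2(D)}$.

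For boundedness I would check that each block entry maps into the correct component of $V$: the top-row entries land in $\mathrm{Dom}((-A)^{\frac{1}{2}})$ because $A^{-1}: L^2(D) \to \mathrm{Dom}(A) \hookrightarrow \mathrm{Dom}((-A)^{\frac{1}{2}})$ is continuous, and the bottom-row entries map into $L^2(D)$ trivially. For invertibility I would solve $Rx = y$ explicitly: the second equation yields $x_2 = y_2 - \tfrac{2a}{b+1} x_1$, and substituting into the first equation and collecting the $x_1$ terms reduces the system to
\begin{equation}
b\left(I + \frac{4a^2}{(b+1)^2} (-A)^{-1}\right) x_1 = y_1 + \frac{2a}{b+1} A^{-1} y_2,
\end{equation}
in which the operator on the left is the sum of $bI$ and a strictly positive compact operator, hence invertible on $\mathrm{Dom}((-A)^{\frac{1}{2}})$ with bounded inverse by the spectral theorem.

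Self-adjointness is obtained by expanding $\langle Rx, y \rangle_V$ with the definition of the $V$-inner product and applying the bridge identities above. After simplification, the expression becomes a sum of five scalar $L^2(D)$ pairings: the diagonal contributions $b \langle x_1, y_1 \rangle_{\mathrm{Dom}((-A)^{\frac{1}{2}})}$, $\tfrac{4a^2}{b+1} \langle x_1, y_1 \rangle_{L^2(D)}$ and $\langle x_2, y_2 \rangle_{L^2(D)}$ (manifestly symmetric in $x$ and $y$), together with the cross terms $\tfrac{2a}{b+1} \langle x_1, y_2 \rangle_{L^2(D)} + \tfrac{2a}{b+1} \langle x_2, y_1 \rangle_{L^2(D)}$, which are symmetric as a pair.

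For the identity \eqref{RxAx} I would substitute $\mathcal A x = (x_2,\, bAx_1 - 2ax_2)^\top$ into $\langle Rx, \mathcal A x \rangle_V$ and expand. The contributions involving $\langle Ax_1, x_2 \rangle_{L^2(D)}$ cancel exactly, and so do those involving $\langle x_1, x_2 \rangle_{L^2(D)}$, precisely because of how the off-diagonal coefficients of $R$ have been chosen. What remains is $\tfrac{2ab}{b+1} \langle x_1, Ax_1 \rangle_{L^2(D)} - \tfrac{2ab}{b+1} \|x_2\|_{L^2(D)}^2 = -\tfrac{2ab}{b+1}\bigl(\|(-A)^{\frac{1}{2}} x_1\|_{L^2(D)}^2 + \|x_2\|_{L^2(D)}^2\bigr) = -\tfrac{2ab}{b+1} \|x\|_V^2$. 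The main obstacle is not conceptual but organizational: the $V$-inner product mixes the graph norm of $(-A)^{\frac{1}{2}}$ on the first coordinate with the $L^2(D)$ norm on the second, so one must be disciplined about which pairing each term belongs to, and apply the bridge identity exactly once per cross term.
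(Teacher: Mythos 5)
Your proof is correct and follows essentially the same route as the paper's: a direct expansion of the $V$--inner product, using the identities $\langle (-A)^{1/2}u,(-A)^{1/2}v\rangle_{L^2(D)}=-\langle Au,v\rangle_{L^2(D)}$ and $(-A)^{1/2}A^{-1}=-(-A)^{-1/2}$ to verify symmetry and then the cancellations that yield $-\tfrac{2ab}{b+1}\|x\|_V^2$. In fact you go slightly further than the paper, which dismisses the isomorphism claim with ``it is evident that $R\in\mathcal L(V)$'' and never addresses invertibility; your explicit reduction of $Rx=y$ to $b\bigl(I+\tfrac{4a^2}{(b+1)^2}(-A)^{-1}\bigr)x_1=y_1+\tfrac{2a}{b+1}A^{-1}y_2$ correctly fills that gap.
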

\begin{proof}
It is evident that $R \in \mathcal L(V)$ and for $x = (x_1, x_2)^{\top} \in V$ and $y = (y_1, y_2)^{\top} \in V$ we have
\begin{align}
\left\langle Rx, y \right\rangle_V &= \left\langle \left( \begin{array}{c}
bx_1 - \frac{4a^2}{b+1} A^{-1} x_1 - \frac{2a}{b+1} A^{-1} x_2 \\
\frac{2a}{b+1} x_1 + x_2
\end{array} \right), \left( \begin{array}{r}
y_1\\
y_2
\end{array} \right) \right\rangle_V \notag \\
&= b \left\langle (-A)^{\frac{1}{2}} x_1, (-A)^{\frac{1}{2}} y_1 \right\rangle_{L^2(D)} - \frac{4a^2}{b+1} \left\langle (-A)^{\frac{1}{2}} A^{-1} x_1, (-A)^{\frac{1}{2}} y_1 \right\rangle_{L^2(D)} \notag \\
&\phantom{=}- \frac{2a}{b+1} \left\langle (-A)^{\frac{1}{2}} A^{-1} x_2, (-A)^{\frac{1}{2}} y_1 \right\rangle_{L^2(D)} + \frac{2a}{b+1} \left\langle x_1, y_2 \right\rangle_{L^2(D)} + \left\langle x_2, y_2 \right\rangle_{L^2(D)} \notag \\
&= \left\langle x, Ry \right\rangle_V, \notag
\end{align}
so $R = R^*$. The equation \eqref{RxAx} can be derived by similar computation. Indeed, for every $x = (x_1, x_2)^{\top} \in \text{Dom}(\mathcal A)$ we have
\begin{align}
\left\langle Rx, \mathcal A x \right\rangle_V &
= \left\langle \left( \begin{array}{c}
bx_1 - \frac{4a^2}{b+1} A^{-1} x_1 - \frac{2a}{b+1} A^{-1} x_2 \\
\frac{2a}{b+1} x_1 + x_2
\end{array} \right), \left( \begin{array}{c}
x_2\\
bAx_1 - 2ax_2
\end{array} \right) \right\rangle_V \notag \\
&= b \left\langle (-A)^{\frac{1}{2}} x_1, (-A)^{\frac{1}{2}} x_2 \right\rangle_{L^2(D)} - \frac{4a^2}{b+1} \left\langle (-A)^{\frac{1}{2}} A^{-1} x_1, (-A)^{\frac{1}{2}} x_2  \right\rangle_{L^2(D)} \notag \\
&\phantom{=}- \frac{2a}{b+1} \left\langle (-A)^{\frac{1}{2}} A^{-1} x_2, (-A)^{\frac{1}{2}} x_2 \right\rangle_{L^2(D)} + \frac{2ab}{b+1} \left\langle x_1, Ax_1 \right\rangle_{L^2(D)} \notag \\
&\phantom{=}- \frac{4a^2}{b+1} \left\langle x_1, x_2 \right\rangle_{L^2(D)} + b \left\langle x_2, Ax_1 \right\rangle_{L^2(D)} - 2a \left\langle x_2, x_2 \right\rangle_{L^2(D)} \notag \\
&= - \frac{2ab}{b+1} \left\langle (-A)^{\frac{1}{2}} x_1, (-A)^{\frac{1}{2}} x_1 \right\rangle_{L^2(D)} - \frac{2ab}{b+1} \left\langle x_2, x_2 \right\rangle_{L^2(D)} \notag \\
&= - \frac{2ab}{b+1} \| x \|_V^2. \notag
\end{align}
\end{proof}

In the proof of Theorem \ref{asymptotic normality of a hat and b hat}, we will also need the alternative formula for the process $I_T$, which was defined by \eqref{definition of IT}.

\begin{lemma} \label{representation of IT}
The process $I_T$ admits the following representation
\begin{align}
I_T = \frac{1}{T} \int_0^T \| X^{x_0}(t) \|_V^2 \, dt &= - \frac{b+1}{4abT} \left( \left\langle RX^{x_0}(T), X^{x_0}(T) \right\rangle_V - \left\langle Rx_0, x_0 \right\rangle_V \right) \notag \\
&\phantom{=}+ \frac{b+1}{2abT} \int_0^T \left\langle RX^{x_0}(t), \Phi dB(t) \right\rangle_V + \frac{b+1}{4ab} \, \mathrm{Tr} \, Q. \label{IT via Ito}
\end{align}
\end{lemma}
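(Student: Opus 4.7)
The plan is to apply Itô's formula to the real-valued quadratic form $\varphi(x) = \left\langle Rx, x \right\rangle_V$ evaluated along the mild solution $X^{x_0}$, and then to rearrange terms. Because $R = R^*$ by Lemma \ref{properties of R}, the Fréchet derivatives are $D\varphi(x)h = 2\left\langle Rx, h \right\rangle_V$ and $D^2\varphi(x) = 2R$ (as a bounded symmetric operator on $V$). The infinite-dimensional Itô formula (Da Prato--Zabczyk) applied to $\varphi(X^{x_0}(t))$ produces three contributions: the drift $2\int_0^T \left\langle RX^{x_0}(t), \mathcal A X^{x_0}(t) \right\rangle_V \, dt$, the martingale $2\int_0^T \left\langle RX^{x_0}(t), \Phi\, dB(t) \right\rangle_V$, and the Itô correction $T \cdot \mathrm{Tr}\,(R\Phi\Phi^*)$.

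The drift term collapses immediately via the identity \eqref{RxAx} to $-\frac{4ab}{b+1} \int_0^T \|X^{x_0}(t)\|_V^2 \, dt$. For the Itô correction, I would compute $\mathrm{Tr}\,(R\Phi\Phi^*)$ in the natural orthonormal basis $\{(f_j, 0)^{\top}\}_j \cup \{(0, e_j)^{\top}\}_j$ of $V$: the block-diagonal structure $\Phi\Phi^* = \mathrm{diag}(0, Q)$ makes the contributions from the first family vanish, while the second family contributes $\sum_j \left\langle Qe_j, e_j \right\rangle_{L^2(D)} = \mathrm{Tr}\,Q$. Substituting both observations into the Itô identity, solving for $\int_0^T \|X^{x_0}(t)\|_V^2 \, dt$, and dividing by $T$ then produces the claimed formula \eqref{IT via Ito}; the coefficients $-\frac{b+1}{4abT}$, $\frac{b+1}{2abT}$, and $\frac{b+1}{4ab}$ arise transparently from the $-\frac{2ab}{b+1}$ in \eqref{RxAx}.

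The main technical obstacle is the rigorous justification of Itô's formula: $X^{x_0}$ is only a mild solution and does not in general take values in $\mathrm{Dom}(\mathcal A)$, so neither $\mathcal A X^{x_0}(t)$ nor the naive identity \eqref{RxAx} makes pointwise sense along the trajectory. The standard remedy is to regularize via the Yosida approximation $\mathcal A_n = n\mathcal A (nI - \mathcal A)^{-1}$: the solutions of the approximating linear equation are strong solutions to which the finite-dimensional Itô calculus applies directly, so the identity holds for each $n$. Passing to the limit is then routine since $R$ is bounded (so the quadratic form is continuous in $V$), the stochastic integral is continuous in the Hilbert--Schmidt norm of its integrand, and $X^{x_0}_n(t) \to X^{x_0}(t)$ in $V$ on bounded intervals. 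Alternatively, since $\varphi$ has bounded symmetric second derivative, one may invoke the generalized Itô formula for mild solutions in Da Prato--Zabczyk directly.
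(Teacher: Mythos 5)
Your proposal follows the same route as the paper: apply It\^o's formula to the quadratic form $g(x)=\langle Rx,x\rangle_V$, collapse the drift via \eqref{RxAx}, identify the It\^o correction as $\mathrm{Tr}\,(R\Phi\Phi^*)=\mathrm{Tr}\,Q$ from the block structure of $\Phi\Phi^*$, and rearrange. All of the computational content matches. The only divergence is the regularization device, and this is the one place where your write-up is too quick. The paper projects onto the span of the first $N$ eigen-type basis vectors $(f_n,0)^{\top},(0,e_n)^{\top}$; since these projections commute with $\mathcal A$ and with $S(t)$, the projected process $X^{x_0,N}$ is a genuine strong solution of a finite-dimensional equation, the exact identity \eqref{RxAx} applies \emph{before} any limit is taken, and the drift becomes $-\tfrac{4ab}{b+1}\|X^{x_0,N}(t)\|_V^2$, which converges by dominated convergence. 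With your Yosida approximation $\mathcal A_n=n\mathcal A(nI-\mathcal A)^{-1}$, the It\^o drift is $2\langle RX_n(t),\mathcal A_n X_n(t)\rangle_V$, and \eqref{RxAx} does not apply verbatim because $\mathcal A_n\neq\mathcal A$; showing that $\int_0^T\langle RX_n,\mathcal A_nX_n\rangle_V\,dt\to-\tfrac{2ab}{b+1}\int_0^T\|X^{x_0}\|_V^2\,dt$ is \emph{not} a consequence of $X_n\to X$ in $V$ alone (the term $\mathcal A_nX_n$ is not controlled by $\|X_n\|_V$), so the phrase ``passing to the limit is then routine'' hides the only genuinely delicate step of that variant. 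It can be repaired in standard ways (e.g.\ writing $\langle Rx,\mathcal A_nx\rangle_V=\langle RJ_nx,\mathcal AJ_nx\rangle_V+\langle R(I-J_n)x,\mathcal AJ_nx\rangle_V$ with $J_n=n(nI-\mathcal A)^{-1}$ and estimating the remainder, or by invoking the generalized It\^o formula for mild solutions directly, as you note in your last sentence), but as written the argument has a hole exactly there. I would either switch to the eigenbasis projections, which make the issue disappear, or supply the missing estimate for the Yosida drift.
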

\begin{proof}
Define the function $g: V \rightarrow \mathbb R$ by
\begin{equation}
g(x) = \left\langle Rx, x \right\rangle_V, \quad \forall x \in V.
\end{equation}

The It\^ o's formula (see e. g. \cite{dapratozabczyk}, Theorem 4.17.) is not applicable to the process $g(X^{x_0}(t))$ directly, because $(X^{x_0}(t), t \geq 0)$ is not a strong solution to the equation \eqref{linear equation with parameters}. We apply it to suitable finite--dimensional projections.

Let $\{ h_n, n \in \mathbb N \}$ be an orthonormal basis in $V$ and let $P_N$ be the operator of projection on the $\text{span} \, \{ h_n, n = 1, \ldots N \}$, that is
$$
P_N x = \sum_{n=1}^N \left\langle x, h_n \right\rangle_V h_n, \quad \forall x \in V, \quad \forall N \in \mathbb N.
$$

Fix $N \in \mathbb N$ and set
$$
X^{x_0, N}(t) := P_N X^{x_0}(t), \quad \forall t \geq 0.
$$

The expansion for the $X^{x_0, N}(t)$ is finite, so $X_1^{x_0, N}(t) \in \text{Dom}(A)$, $X_2^{x_0, N}(t) \in \text{Dom}((-A)^{\frac{1}{2}})$ and consequently $X^{x_0, N}(t) \in \text{Dom}(\mathcal A)$ for all $t \geq 0$. Now we apply It\^ o's formula to the function $g(X^{x_0, N}(t))$, which yields
\begin{equation} \label{ito}
dg(X^{x_0, N}(t)) = 2 \left\langle RX^{x_0, N}(t), dX^{x_0, N}(t) \right\rangle_V + \frac{1}{2} \, \text{Tr} \left( 2R \Phi \Phi^* \right) \, dt.
\end{equation}

The second term may be simplified via following calculation
$$
\frac{1}{2} \, \text{Tr} \left( 2R \Phi \Phi^* \right) = \text{Tr} \left( \begin{array}{cc}
0 & - \frac{2a}{b+1} A^{-1}Q \\
0 & Q
\end{array} \right) = \text{Tr} \, Q.
$$

Using that fact and Lemma \ref{properties of R}, the expression \eqref{ito} implies
\begin{align}
dg(X^{x_0, N}(t)) &= 2 \left\langle RX^{x_0, N}(t), \mathcal A X^{x_0, N}(t) \right\rangle_V \, dt + 2 \left\langle RX^{x_0, N}(t), \Phi dB(t) \right\rangle_V + \text{Tr} \, Q \, dt \notag \\
&= - \frac{4ab}{b+1} \left\| X^{x_0, N}(t) \right\|_V^2 \, dt + 2 \left\langle RX^{x_0, N}(t), \Phi dB(t) \right\rangle_V + \text{Tr} \, Q \, dt. \notag
\end{align}

After integrating previous formula over the interval $(0,T)$, we arrive at
\begin{align}
\frac{1}{T} \int_0^T \left\| X^{x_0, N}(t) \right\|_V^2 \, dt &= - \frac{b+1}{4abT} \left( \left\langle RX^{x_0, N}(T), X^{x_0, N}(T) \right\rangle_V - \left\langle Rx_0^N, x_0^N \right\rangle_V \right) \notag \\
&\phantom{=}+ \frac{b+1}{2abT} \int_0^T \left\langle RX^{x_0, N}(t), \Phi dB(t) \right\rangle_V + \frac{b+1}{4ab} \, \mathrm{Tr} \, Q. \label{IT N via Ito}
\end{align}

Since
$$
\left\| X^{x_0, N}(t) \right\|_V \leq \| X^{x_0}(t) \|_V, \quad \forall t \geq 0, \quad \forall N \in \mathbb N,
$$
the function $\| X^{x_0}(t) \|_V^2$ is an integrable majorant for the integral on the left--hand side. Also
$$
\int_0^T \left\langle RX^{x_0, N}(t), \Phi \, dB(t) \right\rangle_V \rightarrow \int_0^T \left\langle RX^{x_0}(t), \Phi \, dB(t) \right\rangle_V, \quad N \rightarrow \infty \text{ in } L^2(\Omega),
$$
because
$$
\mathbb E \left| \int_0^T \left\langle R \left( X^{x_0, N}(t) - X^{x_0}(t) \right), \Phi \, dB(t) \right\rangle_V \right|^2 \leq C \int_0^T \mathbb E \left\| X^{x_0, N}(t) - X^{x_0}(t) \right\|_V^2 \, dt,
$$
which tends to $0$ as $N \rightarrow \infty$, since
$$
X^{x_0, N}(t) \rightarrow X^{x_0}(t), \quad \forall t \geq 0, \quad N \rightarrow \infty \text{ in } L^2(\Omega).
$$

Hence we obtain \eqref{IT via Ito} by passing $N$ to infinity in \eqref{IT N via Ito}.
\end{proof}

We will also need the following Lemma.

\begin{lemma} \label{convergence to zero}
Let $(X^{x_0}(t), t \geq 0)$ be a solution to the linear equation \eqref{linear equation with parameters} and $R \in \mathcal L(V)$. Then
$$
\frac{1}{\sqrt{t}} \left\langle RX^{x_0}(t), X^{x_0}(t) \right\rangle_V \rightarrow 0
$$
in $L^1(\Omega)$ as $t \rightarrow \infty$.
\end{lemma}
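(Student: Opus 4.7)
The plan is to show that $\mathbb{E}\|X^{x_0}(t)\|_V^2$ is bounded uniformly in $t\geq 0$, and then use boundedness of $R$ together with the factor $1/\sqrt{t}$ to conclude.

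First I would use the mild solution decomposition $X^{x_0}(t) = S(t)x_0 + Z(t)$ from the Proposition in the Preliminaries. The Cauchy--Schwarz inequality and boundedness of $R \in \mathcal L(V)$ give
\begin{equation}
\mathbb{E}\,\bigl|\langle RX^{x_0}(t), X^{x_0}(t)\rangle_V\bigr| \leq \|R\|_{\mathcal L(V)}\, \mathbb{E}\|X^{x_0}(t)\|_V^2,
\end{equation}
so the problem reduces to controlling $\mathbb{E}\|X^{x_0}(t)\|_V^2$ uniformly in $t$.

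Next I would estimate the two summands separately. By the exponential stability (A2), $\mathbb{E}\|S(t)x_0\|_V^2 \leq K^2 e^{-2\rho t}\, \mathbb{E}\|x_0\|_V^2$, which is bounded (even decaying) because $\mathbb{E}\|x_0\|_V^2<\infty$ by assumption. For the stochastic convolution, $\mathbb{E}\|Z(t)\|_V^2 = \mathrm{Tr}\, Q_t$, and since $Q_t$ is monotone increasing to $Q_\infty^{(a,b)}$, which is trace class by Proposition~\ref{existence of invariant measure} (guaranteed by (A1) and (A2), verified in the previous section), we have $\mathrm{Tr}\, Q_t \leq \mathrm{Tr}\, Q_\infty^{(a,b)} < \infty$ for all $t\geq 0$. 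Combining these through the inequality $\|a+b\|^2 \leq 2\|a\|^2 + 2\|b\|^2$, I obtain a constant $C = C(x_0, K, \rho, Q_\infty^{(a,b)})$ such that
\begin{equation}
\sup_{t\geq 0}\mathbb{E}\|X^{x_0}(t)\|_V^2 \leq C.
\end{equation}

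Finally, dividing by $\sqrt{t}$ yields
\begin{equation}
\frac{1}{\sqrt{t}}\, \mathbb{E}\,\bigl|\langle RX^{x_0}(t), X^{x_0}(t)\rangle_V\bigr| \leq \frac{\|R\|_{\mathcal L(V)}\, C}{\sqrt{t}} \longrightarrow 0, \quad t \to \infty,
\end{equation}
which is exactly the $L^1(\Omega)$ convergence claimed. There is no serious obstacle here; the only subtle point is that one must appeal to the already established trace-class property of $Q_\infty^{(a,b)}$ (via Theorem~\ref{form of Q infinity - theorem} and Lemma~\ref{trace of Q infinity}) rather than estimate $\mathrm{Tr}\, Q_t$ by hand.
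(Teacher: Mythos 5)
Your proposal is correct and follows essentially the same route as the paper: bound $\left| \left\langle RX^{x_0}(t), X^{x_0}(t) \right\rangle_V \right|$ by $\| R \|_{\mathcal L(V)} \| X^{x_0}(t) \|_V^2$, split $X^{x_0}(t) = S(t)x_0 + Z(t)$, use exponential stability for the first term and $\mathbb E \| Z(t) \|_V^2 = \mathrm{Tr} \, Q_t \leq \sup_{t \geq 0} \mathrm{Tr} \, Q_t < \infty$ for the second. The only cosmetic difference is that the paper justifies $\sup_{t \geq 0} \mathrm{Tr} \, Q_t < \infty$ by its equivalence with the existence of an invariant measure (citing Da Prato--Zabczyk), whereas you invoke the monotone convergence $Q_t \uparrow Q_{\infty}^{(a,b)}$ with $Q_{\infty}^{(a,b)}$ trace class; both are valid.
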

\begin{proof}
\begin{align}
\mathbb E \left| \frac{\left\langle RX^{x_0}(t), X^{x_0}(t) \right\rangle_V}{\sqrt{t}} \right| &\leq \frac{C}{\sqrt{t}} \, \mathbb E \| X^{x_0}(t) \|_V^2 \notag \\
&\leq \frac{2C}{\sqrt{t}} \, \mathbb E \| S(t) x_0 \|_V^2 + \frac{2C}{\sqrt{t}} \, \mathbb E \| Z(t) \|_V^2 \notag \\
&\leq \frac{C_1}{\sqrt{t}} e^{-2 \rho t} \, \mathbb E \| x_0 \|_V^2 + \frac{2C}{\sqrt{t}} \, \text{Tr} \, Q_t. \notag
\end{align}

Since
$$
\sup_{t \geq 0} \, \text{Tr} \, Q_t < \infty,
$$
(which is equivalent to the existence of an invariant measure, see \cite{dapratozabczyk}, Theorem 11.7.), both terms tend to $0$ as $t$ tends to infinity.

\end{proof}

Finally, define the operator $\tilde{R}: V \rightarrow L^2(D)$ by
\begin{equation}
\tilde{R} x = \left( \begin{array}{cc} \frac{2a}{b+1}I & I \end{array} \right) \left( \begin{array}{r} x_1 \\ x_2 \end{array} \right) = \frac{2a}{b+1} x_1 + x_2, \quad \forall x = \left( \begin{array}{r} x_1 \\ x_2 \end{array} \right) \in V.
\end{equation}

Note, that the adjoint operator of $\tilde{R}$ has the following form
\begin{equation}
\tilde{R}^* : L^2(D) \rightarrow V, \, \tilde{R}^* x = \left( \begin{array}{c} - \frac{2a}{b+1} A^{-1} \\ I \end{array} \right) x = \left( \begin{array}{c} - \frac{2a}{b+1} A^{-1} x \\ x \end{array} \right), \quad \forall x \in L^2(D).
\end{equation}

Asymptotic normality of the estimators $\hat{a}_T$ and $\hat{b}_T$ is formulated in the following Theorem.

\begin{theorem} \label{asymptotic normality of a hat and b hat}
1) The estimator $\hat{a}_T$ is asymptotically normal, that is $\mathrm{Law} \left( \sqrt{T} \left( \hat{a}_T - a \right) \right)$ converges weakly to the centered Gaussian distribution with variance \\ $\frac{4a^2}{(\mathrm{Tr} \, Q)^2} \, \mathrm{Tr} \left( Q \tilde{R} Q_{\infty}^{(a,b)} \tilde{R}^* \right)$, that is
\begin{equation}
\mathrm{Law} \left( \sqrt{T} \left( \hat{a}_T - a \right) \right) \stackrel{w^*}{\longrightarrow} N \left( 0, \, \frac{4a^2}{(\mathrm{Tr} \, Q)^2} \, \mathrm{Tr} \left( Q \tilde{R} Q_{\infty}^{(a,b)} \tilde{R}^* \right) \right), \quad T \rightarrow \infty.
\end{equation}
2) The estimator $\hat{b}_T$ is asymptotically normal, that is
\begin{equation}
\mathrm{Law} \left(\sqrt{T} \left( \hat{b}_T - b \right) \right) \stackrel{w^*}{\longrightarrow} N \left( 0, \, \frac{4b^2(b+1)^2}{(\mathrm{Tr} \, Q)^2} \, \mathrm{Tr} \left( Q \tilde{R} Q_{\infty}^{(a,b)} \tilde{R}^* \right) \right), \quad T \rightarrow \infty.
\end{equation}
\end{theorem}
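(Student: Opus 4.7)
The plan is to first establish asymptotic normality of $\sqrt{T}(I_T - c)$, where $c := \frac{b+1}{4ab}\,\mathrm{Tr}\,Q$ is the ergodic limit identified in Theorem \ref{strong consistency of a hat and b hat}, and then to deduce both claims by the delta method applied to the smooth functions giving $\hat a_T$ and $\hat b_T$ in terms of $I_T$. From the It\^o representation of Lemma \ref{representation of IT}, subtracting $c$ and multiplying by $\sqrt{T}$ yields
\begin{equation*}
\sqrt{T}\bigl(I_T - c\bigr) = -\frac{b+1}{4ab\sqrt{T}}\bigl(\langle RX^{x_0}(T), X^{x_0}(T)\rangle_V - \langle Rx_0, x_0\rangle_V\bigr) + \frac{b+1}{2ab}\cdot\frac{M_T}{\sqrt{T}},
\end{equation*}
where $M_T := \int_0^T \langle RX^{x_0}(t),\Phi\,dB(t)\rangle_V$. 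The boundary contribution tends to $0$ in $L^1(\Omega)$ by Lemma \ref{convergence to zero} applied to the bounded operator $R$, together with the trivial estimate $\mathbb E|\langle Rx_0,x_0\rangle_V|/\sqrt{T}\le \|R\|\,\mathbb E\|x_0\|_V^2/\sqrt{T}$, so the limit is entirely governed by $M_T/\sqrt{T}$.

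Since $\Phi$ has zero first row, $\Phi^*RX^{x_0}(t)$ has zero first coordinate and second coordinate equal to $\Phi_1^*\tilde{R}X^{x_0}(t)$. Hence $M_T$ is a continuous real martingale with quadratic variation
\begin{equation*}
[M]_T = \int_0^T \|\Phi_1^*\tilde{R}X^{x_0}(t)\|_{L^2(D)}^2\,dt = \int_0^T \langle \tilde R^* Q\tilde{R}\,X^{x_0}(t), X^{x_0}(t)\rangle_V\,dt.
\end{equation*}
The functional $\varrho(y):=\langle \tilde R^* Q\tilde{R}y,y\rangle_V$ is locally Lipschitz on $V$ with $m=1$, so Theorem \ref{ergodic theorem} gives
\begin{equation*}
\frac{[M]_T}{T}\xrightarrow[T\to\infty]{\mathbb P\text{-a.s.}} \int_V \langle \tilde R^* Q\tilde Ry,y\rangle_V\,\mu_\infty^{(a,b)}(dy) = \mathrm{Tr}\bigl(\tilde R^* Q\tilde R\,Q_\infty^{(a,b)}\bigr) = \mathrm{Tr}\bigl(Q\tilde R Q_\infty^{(a,b)}\tilde R^*\bigr) =: \sigma_0^2,
\end{equation*}
where the second equality uses the cyclic property of the trace. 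The CLT for continuous local martingales (via Dambis--Dubins--Schwarz combined with Brownian self-similarity) then yields $M_T/\sqrt{T}\stackrel{w^*}{\longrightarrow} N(0,\sigma_0^2)$, and Slutsky's lemma gives
\begin{equation*}
\sqrt{T}(I_T - c)\stackrel{w^*}{\longrightarrow} N\!\left(0,\,\tfrac{(b+1)^2}{4a^2b^2}\,\sigma_0^2\right).
\end{equation*}

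Finally, I would apply the delta method to the smooth maps $f(x) = \tfrac{(b+1)\mathrm{Tr}\,Q}{4bx}$ and $g(x) = \tfrac{\mathrm{Tr}\,Q}{4ax - \mathrm{Tr}\,Q}$, which satisfy $f(c)=a$ and $g(c)=b$. Short calculations give $f'(c) = -\tfrac{4a^2b}{(b+1)\mathrm{Tr}\,Q}$ and $g'(c) = -\tfrac{4ab^2}{\mathrm{Tr}\,Q}$, so the asymptotic variances of $\sqrt{T}(\hat a_T - a)$ and $\sqrt{T}(\hat b_T - b)$ come out to $(f'(c))^2\cdot\tfrac{(b+1)^2}{4a^2b^2}\sigma_0^2 = \tfrac{4a^2}{(\mathrm{Tr}\,Q)^2}\sigma_0^2$ and $(g'(c))^2\cdot\tfrac{(b+1)^2}{4a^2b^2}\sigma_0^2 = \tfrac{4b^2(b+1)^2}{(\mathrm{Tr}\,Q)^2}\sigma_0^2$ respectively, matching the claimed formulae.

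The main obstacle I expect is the clean application of the martingale CLT from a.s.\ convergence $[M]_T/T\to\sigma_0^2$ to weak convergence of $M_T/\sqrt{T}$; this requires the DDS representation $M_T = B'_{[M]_T}$ and a careful argument that $B'_{[M]_T}/\sqrt{T}=\sqrt{[M]_T/T}\cdot B'_{[M]_T}/\sqrt{[M]_T}$ converges in law to $\sigma_0\,N(0,1)$. A secondary bookkeeping step is the identification of the ergodic integral with $\mathrm{Tr}(Q\tilde R Q_\infty^{(a,b)}\tilde R^*)$, which rests on the standard identity $\int_V\langle Ky,y\rangle_V\,\mu_\infty^{(a,b)}(dy)=\mathrm{Tr}(KQ_\infty^{(a,b)})$ for bounded self-adjoint $K$ and centered Gaussian $\mu_\infty^{(a,b)}$.
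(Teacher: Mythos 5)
Your proposal is correct and follows essentially the same route as the paper: the It\^o representation of $I_T$ from Lemma \ref{representation of IT}, the vanishing of the boundary term via Lemma \ref{convergence to zero}, the martingale CLT with the normalized quadratic variation identified through Theorem \ref{ergodic theorem} as $\mathrm{Tr}\bigl( Q \tilde{R} Q_{\infty}^{(a,b)} \tilde{R}^* \bigr)$, and a Slutsky-type conclusion. The only (cosmetic) difference is that you first prove a CLT for $\sqrt{T}(I_T - c)$ and then invoke the delta method, whereas the paper exploits the exact identity $\sqrt{T}(\hat{a}_T - a) = -\tfrac{a}{I_T}\sqrt{T}(I_T - c)$ (and its analogue for $\hat{b}_T$) to avoid any Taylor expansion; your derivative computations and resulting variances match the paper's.
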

\begin{proof}
Using formula \eqref{a hat} for the estimator $\hat{a}_T$ and Lemma \ref{representation of IT} for the representation of $I_T$, it is possible to compute the following
\begin{align}
\sqrt{T} \left( \hat{a}_T - a \right) &= \sqrt{T} \left( \frac{b+1}{4bI_T} \, \text{Tr} \, Q - a \right) = \sqrt{T} \frac{(b+1) \, \text{Tr} \, Q - 4abI_T}{4bI_T} \notag \\
&= \frac{\sqrt{T}}{4bI_T} \left( \frac{b+1}{T} \left( \left\langle RX^{x_0}(T), X^{x_0}(T) \right\rangle_V - \left\langle Rx_0, x_0 \right\rangle_V \right) \right. \notag \\
&\phantom{=} \left. - \frac{2(b+1)}{T} \int_0^T \left\langle RX^{x_0}(t), \Phi dB(t) \right\rangle_V \right) \notag \\
&= \frac{b+1}{4bI_T} \frac{1}{\sqrt{T}} \left( \left\langle RX^{x_0}(T), X^{x_0}(T) \right\rangle_V - \left\langle Rx_0, x_0 \right\rangle_V \right) \notag \\
&\phantom{=} - \frac{b+1}{2bI_T} \frac{1}{\sqrt{T}} \int_0^T \left\langle RX^{x_0}(t), \Phi dB(t) \right\rangle_V. \label{asymptotics for a}
\end{align}

The first term $\frac{1}{\sqrt{T}} \left( \left\langle RX^{x_0}(T), X^{x_0}(T) \right\rangle_V - \left\langle Rx_0, x_0 \right\rangle_V \right) \rightarrow 0$ in probability as $T \rightarrow \infty$ by Lemma \ref{convergence to zero}. Also define
\begin{align}
q(T) &= \frac{1}{\sqrt{T}} \int_0^T \left\langle RX^{x_0}(t), \Phi dB(t) \right\rangle_V \notag \\
&= \frac{1}{\sqrt{T}} \int_0^T \sum_{n=1}^{\infty} \sqrt{\lambda_n} \left\langle \tilde{R} X^{x_0}(t), e^{\prime}_n \right\rangle_{L^2(D)} \, d\beta_n (t), \notag
\end{align}
where we have used the representation of $V$--valued Brownian motion $B(t)$.

By the central limit theorem for martingales (see e. g. \cite{kutoyants}, Proposition 1.22.), $\mathrm{Law} \left( q(T) \right)$ converges weakly to the Gaussian distribution with a zero mean and variance given by
\begin{align}
\lim_{T \rightarrow \infty} \frac{1}{T} \int_0^T \sum_{n=1}^{\infty} \lambda_n \left\langle \tilde{R} X^{x_0}(t), e^{\prime}_n \right\rangle_{L^2(D)}^2 \, dt &= \lim_{T \rightarrow \infty} \frac{1}{T} \int_0^T \sum_{n=1}^{\infty} \left\langle Q^{\frac{1}{2}} \tilde{R} X^{x_0}(t), e^{\prime}_n \right\rangle_{L^2(D)}^2 \, dt \notag \\
&= \lim_{T \rightarrow \infty} \frac{1}{T} \int_0^T \| Q^{\frac{1}{2}} \tilde{R} X^{x_0}(t) \|_{L^2(D)}^2 \, dt \notag \\
&= \mathbb E \| Q^{\frac{1}{2}} \tilde{R} X(\infty) \|_{L^2(D)}^2 \notag \\
&= \text{Tr} \left( Q \tilde{R} Q_{\infty}^{(a,b)} \tilde{R}^* \right), \notag
\end{align}
where $X(\infty)$ is a $V$--valued Gaussian random variable with zero mean and covariance operator $Q_{\infty}^{(a,b)}$ (that is $\text{Law} \left(X(\infty) \right) = \mu_{\infty}^{(a,b)}$).

Since the multiplicative factor $- \frac{b+1}{2bI_T}$ of $q(T)$ in \eqref{asymptotics for a} converges to $- \frac{2a}{\text{Tr} \, Q}$ as $T \rightarrow \infty$, we arrive at
\begin{align}
\text{Law} (q(T)) &\stackrel{w^*}{\longrightarrow} N \left( 0, \, \text{Tr} \left( Q \tilde{R} Q_{\infty}^{(a,b)} \tilde{R}^* \right) \right), \quad T \rightarrow \infty, \label{asymptotics for qT} \\
\text{Law} \left( \sqrt{T} \left( \hat{a}_T - a \right) \right) &\stackrel{w^*}{\longrightarrow} N \left( 0, \, \frac{4a^2}{(\mathrm{Tr} \, Q)^2} \, \mathrm{Tr} \left( Q \tilde{R} Q_{\infty}^{(a,b)} \tilde{R}^* \right) \right), \quad T \rightarrow \infty. \notag
\end{align}

In a similar fashion, using formula \eqref{b hat} for the estimator $\hat{b}_T$ and Lemma \ref{representation of IT}, it is possible to compute the following
\begin{align}
\sqrt{T} \left( \hat{b}_T - b \right) &= \sqrt{T} \left( \frac{\text{Tr} \, Q}{4a I_T - \text{Tr} \, Q} - b \right) = \frac{\sqrt{T}}{4a I_T - \text{Tr} \, Q} \left( \text{Tr} \, Q - 4abI_T + b \, \text{Tr} \, Q \right) \notag \\
&= \frac{\sqrt{T}}{4a I_T - \text{Tr} \, Q} \left( \frac{b+1}{T} \left( \left\langle RX^{x_0}(T), X^{x_0}(T) \right\rangle_V - \left\langle Rx_0, x_0 \right\rangle_V \right) \right. \notag \\
&\phantom{=} \left. - \frac{2(b+1)}{T} \int_0^T \left\langle RX^{x_0}(t), \Phi dB(t) \right\rangle_V \right) \notag \\
&= \frac{b+1}{4a I_T - \text{Tr} \, Q} \frac{1}{\sqrt{T}} \left( \left\langle RX^{x_0}(T), X^{x_0}(T) \right\rangle_V - \left\langle Rx_0, x_0 \right\rangle_V \right) \notag \\
&\phantom{=}- \frac{2(b+1)}{4a I_T - \text{Tr} \, Q} \frac{1}{\sqrt{T}} \int_0^T \left\langle RX^{x_0}(t), \Phi dB(t) \right\rangle_V. \label{asypmtotics for b}
\end{align}

As above, the term $\frac{1}{\sqrt{T}} \left( \left\langle RX^{x_0}(T), X^{x_0}(T) \right\rangle_V - \left\langle Rx_0, x_0 \right\rangle_V \right) \rightarrow 0$ in pro\-ba\-bi\-li\-ty as $T \rightarrow \infty$ and the multiplicative factor $- \frac{2(b+1)}{4a I_T - \text{Tr} \, Q}$ of $q(T)$ in \eqref{asypmtotics for b} converges to $- \frac{2b(b+1)}{\text{Tr} \, Q}$ as $T \rightarrow \infty$. Hence we obtain the result
$$
\text{Law} \left( \sqrt{T} \left( \hat{b}_T - b \right) \right) \stackrel{w^*}{\longrightarrow} N \left( 0, \, \frac{4b^2(b+1)^2}{(\mathrm{Tr} \, Q)^2} \, \mathrm{Tr} \left( Q \tilde{R} Q_{\infty}^{(a,b)} \tilde{R}^* \right) \right), \quad T \rightarrow \infty.
$$
\end{proof}

\begin{remark} \label{form of limiting variance}
We specify the variance of the limiting Gaussian distribution in \eqref{asymptotics for qT}. By Theorem \ref{form of Q infinity - theorem}, we obtain
\begin{equation} \label{trace QRQR}
\mathrm{Tr} \left( Q \tilde{R} Q_{\infty}^{(a,b)} \tilde{R}^* \right) = \sum_{n=1}^{\infty} \sum_{k=1}^{\infty} \frac{16a^3 + 2ab(b+1)^2(\alpha_n + \alpha_k)}{b^2 (\alpha_n - \alpha_k)^2 + 8a^2 b(\alpha_n + \alpha_k)} \frac{1}{(b+1)^2} \left\langle Qe_n, e_k \right\rangle_{L^2(D)}^2.
\end{equation}
\end{remark}

\subsection{Asymptotic normality of the estimators $\tilde{a}_T$, $\tilde{b}_T$}
The family of estimators $\tilde{a}_T$, $\tilde{b}_T$ is also asymptotically normal, which will be shown in Theorem \ref{asymptotic normality of a tilde and b tilde}. The proof uses the same method as proof of Theorem \ref{asymptotic normality of a hat and b hat}, so the setup and auxiliary Lemmas will be very similar to those in previous subsection.

We start with the definition of operators $R_1: V \rightarrow V$ and $R_2: V \rightarrow V$:
$$
R_1 x = R_1 \left( \begin{array}{r}
x_1\\
x_2
\end{array} \right) = \left( \begin{array}{cc}
bI & 0 \\
0 & I
\end{array} \right) \left( \begin{array}{r}
x_1\\
x_2
\end{array} \right), \quad \forall x = \left( \begin{array}{r}
x_1\\
x_2
\end{array} \right) \in V,
$$
$$
R_2 x = R_2 \left( \begin{array}{r}
x_1\\
x_2
\end{array} \right) = \left( \begin{array}{cc}
bI - 4a^2 A^{-1} & - 2a A^{-1}\\
2a I & I
\end{array} \right) \left( \begin{array}{r}
x_1\\
x_2
\end{array} \right), \quad \forall x = \left( \begin{array}{r}
x_1\\
x_2
\end{array} \right) \in V.
$$

The properties of these two operators are summarized in the following Lemma.

\begin{lemma}\label{properties of R1 and R2}
The operators $R_1$ and $R_2$ are self--adjoint linear isomorphisms of $V$. Moreover,
\begin{align}
\left\langle R_1 x, \mathcal A x \right\rangle_V &= - 2a \| x_2 \|_{L^2(D)}^2, \quad \forall x = \left( \begin{array}{r}
x_1\\
x_2
\end{array} \right) \in \mathrm{Dom}(\mathcal A), \label{R1xAx} \\
\left\langle R_2 x, \mathcal A x \right\rangle_V &= - 2ab \| x_1 \|_{\mathrm{Dom}(-A)^{\frac{1}{2}}}^2, \quad \forall x = \left( \begin{array}{r}
x_1\\
x_2
\end{array} \right) \in \mathrm{Dom}(\mathcal A). \label{R2xAx}
\end{align}
\end{lemma}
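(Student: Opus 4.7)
The proof is parallel to that of Lemma \ref{properties of R} and amounts to direct verifications. First I would establish that both $R_1$ and $R_2$ are bounded linear isomorphisms of $V$. $R_1$ is obviously bounded with inverse $\text{diag}(b^{-1} I, I)$. For $R_2$, boundedness follows from $A^{-1} \in \mathcal L(L^2(D))$ with range in $\text{Dom}(A) \subset \text{Dom}((-A)^{\frac{1}{2}})$, so that each block of $R_2$ maps into the correct component of $V$; invertibility is obtained by block elimination, solving the system $R_2 x = y$ explicitly.

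For self-adjointness, I would expand $\langle R_i x, y\rangle_V$ for generic $x = (x_1, x_2)^\top, y = (y_1, y_2)^\top \in V$ using the definition of the $V$--inner product. For $R_1$ this is immediate since $R_1$ is block-diagonal with each block self-adjoint on its factor. For $R_2$ the mixed terms take the form $\langle (-A)^{\frac{1}{2}} A^{-1} x_j, (-A)^{\frac{1}{2}} y_k\rangle_{L^2(D)}$; the spectral identity $(-A)^{\frac{1}{2}} A^{-1} = -(-A)^{-\frac{1}{2}}$ (obvious on the eigenbasis $\{e_n\}$) collapses these to multiples of $-\langle x_j, y_k\rangle_{L^2(D)}$, after which symmetry under $x \leftrightarrow y$ is visible by inspection, exactly as in the $R$--calculation from Lemma \ref{properties of R}.

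For the identities \eqref{R1xAx} and \eqref{R2xAx}, I would fix $x \in \text{Dom}(\mathcal A)$, so $x_1 \in \text{Dom}(A)$ and $x_2 \in \text{Dom}((-A)^{\frac{1}{2}})$, substitute $\mathcal A x = (x_2, bAx_1 - 2ax_2)^\top$, and expand each of the two $V$--inner product contributions. For $R_1$, the first $V$--component produces $-b\langle Ax_1, x_2\rangle_{L^2(D)}$ and the second $b\langle x_2, Ax_1\rangle_{L^2(D)} - 2a\|x_2\|_{L^2(D)}^2$; self-adjointness of $A$ cancels the two cross terms, leaving \eqref{R1xAx}. For $R_2$, the same expansion yields, in addition to the opposing $\pm b\langle Ax_1, x_2\rangle_{L^2(D)}$ pair, auxiliary contributions $\pm 4a^2\langle x_1, x_2\rangle_{L^2(D)}$ and $\pm 2a\|x_2\|_{L^2(D)}^2$ produced respectively by the $A^{-1}$--entries of $R_2 x$ in the first $V$--component and by the drift term in the second component; all of these cancel pairwise, leaving only $2ab\langle x_1, Ax_1\rangle_{L^2(D)} = -2ab\|x_1\|_{\text{Dom}((-A)^{\frac{1}{2}})}^2$, which is \eqref{R2xAx}. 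The only point requiring care is consistent bookkeeping of the inner products involving unbounded operators, via the identity $\langle (-A)^{\frac{1}{2}} u, (-A)^{\frac{1}{2}} v\rangle_{L^2(D)} = -\langle Au, v\rangle_{L^2(D)}$ valid for $u \in \text{Dom}(A)$, $v \in \text{Dom}((-A)^{\frac{1}{2}})$; given the domain assumptions on $x \in \text{Dom}(\mathcal A)$, no intermediate quantity ever falls outside its natural space, so the manipulations are fully rigorous.
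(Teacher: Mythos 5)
Your proposal is correct and follows essentially the same route as the paper: direct expansion of $\left\langle R_i x, y \right\rangle_V$ and $\left\langle R_i x, \mathcal A x \right\rangle_V$ in the $V$--inner product, with the cross terms cancelling via $\left\langle (-A)^{\frac{1}{2}} u, (-A)^{\frac{1}{2}} v \right\rangle_{L^2(D)} = -\left\langle Au, v \right\rangle_{L^2(D)}$; your bookkeeping of the cancelling pairs matches the paper's computation exactly. You in fact supply slightly more detail than the paper on the isomorphism claim, which the paper dispatches by simply asserting $R_1, R_2 \in \mathcal L(V)$.
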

\begin{proof}
It is evident that $R_1, R_2 \in \mathcal L(V)$ and for $x = (x_1, x_2)^{\top} \in V$ and $y = (y_1, y_2)^{\top} \in V$ we have
\begin{align}
\left\langle R_1 x, y \right\rangle_V &= \left\langle \left( \begin{array}{c}
bx_1 \\
x_2
\end{array} \right), \left( \begin{array}{r}
y_1\\
y_2
\end{array} \right) \right\rangle_V = b \left\langle (-A)^{\frac{1}{2}} x_1, (-A)^{\frac{1}{2}} y_1 \right\rangle_{L^2(D)} + \left\langle x_2, y_2 \right\rangle_{L^2(D)} \notag \\
&= \left\langle x, R_1 y \right\rangle_V, \notag
\end{align}
and
\begin{align}
\left\langle R_2 x, y \right\rangle_V &= \left\langle \left( \begin{array}{c}
bx_1 - 4a^2 A^{-1} x_1 - 2a A^{-1} x_2 \\
2a x_1 + x_2
\end{array} \right), \left( \begin{array}{r}
y_1\\
y_2
\end{array} \right) \right\rangle_V \notag \\
&= b \left\langle (-A)^{\frac{1}{2}} x_1, (-A)^{\frac{1}{2}} y_1 \right\rangle_{L^2(D)} - 4a^2 \left\langle (-A)^{\frac{1}{2}} A^{-1} x_1, (-A)^{\frac{1}{2}} y_1 \right\rangle_{L^2(D)} \notag \\
&\phantom{=}- 2a \left\langle (-A)^{\frac{1}{2}} A^{-1} x_2, (-A)^{\frac{1}{2}} y_1 \right\rangle_{L^2(D)} + 2a \left\langle x_1, y_2 \right\rangle_{L^2(D)} + \left\langle x_2, y_2 \right\rangle_{L^2(D)} \notag \\
&= \left\langle x, R_2 y \right\rangle_V, \notag
\end{align}
so $R_1 = R_1^*$ and $R_2 = R_2^*$. The equation \eqref{R1xAx} can be derived by simple computation. For every $x = (x_1, x_2)^{\top} \in \text{Dom}(\mathcal A)$ we have
\begin{align}
\left\langle R_1 x, \mathcal A x \right\rangle_V &
= \left\langle \left( \begin{array}{c}
bx_1 \\
x_2
\end{array} \right), \left( \begin{array}{c}
x_2\\
bAx_1 - 2ax_2
\end{array} \right) \right\rangle_V \notag \\
&= b \left\langle (-A)^{\frac{1}{2}} x_1, (-A)^{\frac{1}{2}} x_2 \right\rangle_{L^2(D)} + b \left\langle x_2, A x_1 \right\rangle_{L^2(D)} - 2a \left\langle x_2, x_2 \right\rangle_{L^2(D)} \notag \\
&= -2a \| x_2 \|_{L^2(D)}^2. \notag
\end{align}

Similar computation yields \eqref{R2xAx}:
\begin{align}
\left\langle R_2 x, \mathcal A x \right\rangle_V &
= \left\langle \left( \begin{array}{c}
bx_1 - 4a^2 A^{-1} x_1 - 2a A^{-1} x_2\\
2a x_1 + x_2
\end{array} \right), \left( \begin{array}{c}
x_2\\
bAx_1 - 2ax_2
\end{array} \right) \right\rangle_V \notag \\
&= b \left\langle (-A)^{\frac{1}{2}} x_1, (-A)^{\frac{1}{2}} x_2 \right\rangle_{L^2(D)} - 4a^2 \left\langle (-A)^{\frac{1}{2}} A^{-1} x_1, (-A)^{\frac{1}{2}} x_2  \right\rangle_{L^2(D)} \notag \\
&\phantom{=}- 2a \left\langle (-A)^{\frac{1}{2}} A^{-1} x_2, (-A)^{\frac{1}{2}} x_2 \right\rangle_{L^2(D)} + 2ab \left\langle x_1, Ax_1 \right\rangle_{L^2(D)} \notag \\
&\phantom{=}- 4a^2 \left\langle x_1, x_2 \right\rangle_{L^2(D)} + b \left\langle x_2, Ax_1 \right\rangle_{L^2(D)} - 2a \left\langle x_2, x_2 \right\rangle_{L^2(D)} \notag \\
&= -2ab \| x_1 \|_{\text{Dom}(-A)^{\frac{1}{2}}}^2. \notag
\end{align}
\end{proof}

We will also need the alternative formulae for processes $Y_T$ and $H_T$, which were defined by \eqref{definition of YT and HT}.

\begin{lemma} \label{representation of YT and HT}
The process $Y_T$ admits the following representation
\begin{align}
Y_T &= \frac{1}{T} \int_0^T \| X_1^{x_0}(t) \|_{\mathrm{Dom}(-A)^{\frac{1}{2}}}^2 \, dt \notag \\
&= - \frac{1}{4abT} \left( \left\langle R_2 X^{x_0}(T), X^{x_0}(T) \right\rangle_V - \left\langle R_2 x_0, x_0 \right\rangle_V \right) \notag \\
&\phantom{=}+ \frac{1}{2abT} \int_0^T \left\langle R_2 X^{x_0}(t), \Phi dB(t) \right\rangle_V + \frac{1}{4ab} \, \mathrm{Tr} \, Q. \label{YT via Ito}
\end{align}

The process $H_T$ admits the following representation
\begin{align}
H_T &= \frac{1}{T} \int_0^T \| X_2^{x_0}(t) \|_{L^2(D)}^2 \, dt \notag \\
&= - \frac{1}{4aT} \left( \left\langle R_1 X^{x_0}(T), X^{x_0}(T) \right\rangle_V - \left\langle R_1 x_0, x_0 \right\rangle_V \right) \notag \\
&\phantom{=}+ \frac{1}{2aT} \int_0^T \left\langle R_1 X^{x_0}(t), \Phi dB(t) \right\rangle_V + \frac{1}{4a} \, \mathrm{Tr} \, Q. \label{HT via Ito}
\end{align}
\end{lemma}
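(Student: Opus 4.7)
The plan is to mirror the proof of Lemma \ref{representation of IT}, substituting $R$ with $R_1$ to derive the representation of $H_T$, and with $R_2$ to derive the representation of $Y_T$. Define the two real-valued functions $g_i(x) := \langle R_i x, x\rangle_V$ on $V$ for $i=1,2$. Since $(X^{x_0}(t),t\geq 0)$ is only a mild solution, we cannot apply It\^o's formula directly; instead, as in Lemma \ref{representation of IT}, fix $N\in\mathbb N$, let $P_N$ denote the projection of $V$ onto the span of the first $N$ vectors of a chosen orthonormal basis of $V$, and set $X^{x_0,N}(t) := P_N X^{x_0}(t)$. Then $X^{x_0,N}(t)\in \mathrm{Dom}(\mathcal A)$ for every $t\geq 0$, and It\^o's formula may be applied to $g_i(X^{x_0,N}(t))$.

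The drift term in the resulting It\^o expansion is precisely $2\langle R_i X^{x_0,N}(t),\mathcal A X^{x_0,N}(t)\rangle_V\,dt$, which by Lemma \ref{properties of R1 and R2} equals $-4a\|X_2^{x_0,N}(t)\|_{L^2(D)}^2\,dt$ for $i=1$ and $-4ab\|X_1^{x_0,N}(t)\|_{\mathrm{Dom}((-A)^{1/2})}^2\,dt$ for $i=2$; these identifications are the reason for introducing $R_1$ and $R_2$ in exactly their chosen form. The quadratic variation term contributes $\mathrm{Tr}(R_i\Phi\Phi^*)\,dt$, and a short block-matrix computation gives
\begin{equation}
R_1 \Phi\Phi^* = \begin{pmatrix} 0 & 0 \\ 0 & Q \end{pmatrix}, \qquad R_2 \Phi\Phi^* = \begin{pmatrix} 0 & -2aA^{-1}Q \\ 0 & Q \end{pmatrix},
\end{equation}
so in both cases the trace is $\mathrm{Tr}\,Q$.

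Integrating the resulting It\^o identity over $(0,T)$ and solving for $\frac{1}{T}\int_0^T \|X_i^{x_0,N}(t)\|^2\,dt$ produces the analogues of \eqref{HT via Ito} and \eqref{YT via Ito} at the level of the projected process $X^{x_0,N}$. To finish, I would pass $N\to\infty$: the integrals on the left are controlled by $\|X^{x_0}(t)\|_V^2$, which provides an integrable majorant on $(0,T)$; the boundary terms $\langle R_i X^{x_0,N}(T), X^{x_0,N}(T)\rangle_V$ converge to $\langle R_i X^{x_0}(T), X^{x_0}(T)\rangle_V$ by continuity of $R_i$ and of the inner product; and the stochastic integrals converge in $L^2(\Omega)$ by the It\^o isometry, because
\begin{equation}
\mathbb E\left|\int_0^T \langle R_i(X^{x_0,N}(t)-X^{x_0}(t)),\Phi\,dB(t)\rangle_V\right|^2 \leq C\int_0^T \mathbb E\|X^{x_0,N}(t)-X^{x_0}(t)\|_V^2\,dt \to 0,
\end{equation}
exactly as in Lemma \ref{representation of IT}.

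The main obstacle is precisely this limiting step, but since the verification is identical in structure to the one already carried out for $I_T$ (only the constant coefficients and the operator change), no new technical difficulty arises; the algebraic identities from Lemma \ref{properties of R1 and R2} do the essential work.
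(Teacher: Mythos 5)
Your proposal is correct and follows essentially the same route as the paper's own proof: define $g_i(x)=\langle R_i x,x\rangle_V$, apply It\^o's formula to the finite--dimensional projections $X^{x_0,N}$, use Lemma \ref{properties of R1 and R2} to identify the drift terms, compute $\mathrm{Tr}(R_i\Phi\Phi^*)=\mathrm{Tr}\,Q$ via the same block--matrix products, and pass $N\to\infty$ exactly as in Lemma \ref{representation of IT}. No gaps.
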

\begin{proof}
Define the function $g_1: V \rightarrow \mathbb R$ by
\begin{equation}
g_1(x) = \left\langle R_1 x, x \right\rangle_V, \quad \forall x \in V.
\end{equation}

The application of It\^ o's formula to the function $g_1(X^{x_0, N}(t))$ (we also have to use suitable projections, see proof of Lemma \ref{representation of IT}), yields
\begin{equation} \label{ito g1}
dg_1(X^{x_0, N}(t)) = 2 \left\langle R_1 X^{x_0, N}(t), dX^{x_0, N}(t) \right\rangle_V + \frac{1}{2} \, \text{Tr} \left( 2R_1 \Phi \Phi^* \right) \, dt.
\end{equation}

Since the second term equals to
$$
\frac{1}{2} \text{ Tr} \left( 2R_1 \Phi \Phi^* \right) = \text{Tr} \left( \begin{array}{cc}
0 & 0 \\
0 & Q
\end{array} \right) = \text{Tr } Q,
$$
the expression \eqref{ito g1} and Lemma \ref{properties of R1 and R2} imply
\begin{align}
dg_1(X^{x_0, N}(t)) &= 2 \left\langle R_1 X^{x_0, N}(t), \mathcal A X^{x_0, N}(t) \right\rangle_V \, dt + 2 \left\langle R_1 X^{x_0, N}(t), \Phi dB(t) \right\rangle_V + \text{Tr} \, Q \, dt \notag \\
&= - 4a \left\| X_2^{x_0, N}(t) \right\|_{L^2(D)}^2 \, dt + 2 \left\langle R_1 X^{x_0, N}(t), \Phi dB(t) \right\rangle_V + \text{Tr} \, Q \, dt. \notag
\end{align}

After integrating previous formula over the interval $(0,T)$ and passing $N$ to infinity, we will arrive at \eqref{HT via Ito}.

Similarly, if we define the function $g_2: V \rightarrow \mathbb R$ by
\begin{equation}
g_2(x) = \left\langle R_2 x, x \right\rangle_V, \quad \forall x \in V,
\end{equation}
and apply It\^ o's formula to the function $g_2(X^{x_0, N}(t))$, we will obtain
\begin{equation} \label{ito g2}
dg_2(X^{x_0, N}(t)) = 2 \left\langle R_2 X^{x_0, N}(t), dX^{x_0, N}(t) \right\rangle_V + \frac{1}{2} \, \text{Tr} \left( 2R_2 \Phi \Phi^* \right) \, dt.
\end{equation}

Since the second term equals to
$$
\frac{1}{2} \text{ Tr} \left( 2R_2 \Phi \Phi^* \right) = \text{Tr} \left( \begin{array}{cc}
0 & -2a A^{-1} Q \\
0 & Q
\end{array} \right) = \text{Tr } Q,
$$
the expression \eqref{ito g2} and Lemma \ref{properties of R1 and R2} imply
\begin{align}
dg_2(X^{x_0, N}(t)) &= 2 \left\langle R_2 X^{x_0, N}(t), \mathcal A X^{x_0, N}(t) \right\rangle_V \, dt + 2 \left\langle R_2 X^{x_0, N}(t), \Phi dB(t) \right\rangle_V + \text{Tr} \, Q \, dt \notag \\
&= - 4ab \left\| X_1^{x_0, N}(t) \right\|_{\text{Dom}(-A)^{\frac{1}{2}}}^2 \, dt + 2 \left\langle R_2 X^{x_0, N}(t), \Phi dB(t) \right\rangle_V + \text{Tr} \, Q \, dt. \notag
\end{align}

After integrating previous formula over the interval $(0,T)$ and passing $N$ to infinity, we will arrive at \eqref{YT via Ito}.
\end{proof}

Also define the operator $\tilde{R}_1: V \rightarrow L^2(D)$ by
\begin{equation}
\tilde{R}_1 x = \left( \begin{array}{cc} 0 & I \end{array} \right) \left( \begin{array}{r} x_1 \\ x_2 \end{array} \right) = x_2, \quad \forall x = \left( \begin{array}{r} x_1 \\ x_2 \end{array} \right) \in V,
\end{equation}


and the operator $\tilde{R}_2: V \rightarrow L^2(D)$ by
\begin{equation}
\tilde{R}_2 x = \left( \begin{array}{cc} 2a I & 0 \end{array} \right) \left( \begin{array}{r} x_1 \\ x_2 \end{array} \right) = 2a x_1, \quad \forall x = \left( \begin{array}{r} x_1 \\ x_2 \end{array} \right) \in V.
\end{equation}

Note that
\begin{equation}
\tilde{R}_2^* : L^2(D) \rightarrow V, \, \tilde{R}^* x = \left( \begin{array}{c} -2a A^{-1} \\ 0 \end{array} \right) x = \left( \begin{array}{c} -2a A^{-1} x \\ 0 \end{array} \right), \quad \forall x \in L^2(D).
\end{equation}

Asymptotic normality of the estimators $\tilde{a}_T$ and $\tilde{b}_T$ is formulated in the following Theorem.

\begin{theorem} \label{asymptotic normality of a tilde and b tilde}
1) The estimator $\tilde{a}_T$ is asymptotically normal, that is
\begin{equation}
\mathrm{Law} \left (\sqrt{T} \left( \tilde{a}_T - a \right) \right) \stackrel{w^*}{\longrightarrow} N \left( 0, \, \frac{4a^2}{(\mathrm{Tr} \, Q)^2} \, \mathrm{Tr} \left( Q \tilde{R}_1 Q_{\infty}^{(a,b)} \tilde{R}_1^* \right) \right), \quad T \rightarrow \infty.
\end{equation}
2) The estimator $\tilde{b}_T$ is asymptotically normal, that is
\begin{equation}
\mathrm{Law} \left(\sqrt{T} \left( \tilde{b}_T - b \right) \right) \stackrel{w^*}{\longrightarrow} N \left( 0, \, \frac{4b^2}{(\mathrm{Tr} \, Q)^2} \, \mathrm{Tr} \left( Q \tilde{R}_2 Q_{\infty}^{(a,b)} \tilde{R}_2^* \right) \right), \quad T \rightarrow \infty.
\end{equation}
\end{theorem}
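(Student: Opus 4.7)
The plan is to mirror the proof of Theorem~\ref{asymptotic normality of a hat and b hat}: convert $\tilde{a}_T - a$ (respectively $\tilde{b}_T - b$) into a boundary term plus a stochastic integral by means of the It\^o representations in Lemma~\ref{representation of YT and HT}, dispose of the boundary term via Lemma~\ref{convergence to zero}, apply the martingale central limit theorem to identify the Gaussian limit of the stochastic integral, and pass the almost sure multiplicative factor through the limit using Slutsky together with the ergodic limits $H_T \to \tfrac{1}{4a}\mathrm{Tr}\,Q$ and $Y_T \to \tfrac{1}{4ab}\mathrm{Tr}\,Q$ established in the proof of Theorem~\ref{strong consistency of a tilde and b tilde}.

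For part 1, I would first write
\[
\sqrt{T}\bigl(\tilde{a}_T - a\bigr) = \frac{\sqrt{T}}{4 H_T}\bigl(\mathrm{Tr}\,Q - 4a H_T\bigr),
\]
and substitute \eqref{HT via Ito}: the quantity $\mathrm{Tr}\,Q - 4a H_T$ equals a boundary contribution plus a stochastic integral. After division by $\sqrt{T}$ the boundary term vanishes in probability by Lemma~\ref{convergence to zero} applied to $R_1 \in \mathcal L(V)$. The remaining piece is
\[
-\frac{1}{2 a H_T} \cdot \frac{1}{\sqrt{T}} \int_0^T \langle R_1 X^{x_0}(t), \Phi\, dB(t)\rangle_V,
\]
and since only the second component of $\Phi\, dB(t)$ is nonzero, the inner product reduces to $\langle \tilde{R}_1 X^{x_0}(t), \Phi_1\, dB(t)\rangle_{L^2(D)}$. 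The central limit theorem for continuous martingales, combined with Theorem~\ref{ergodic theorem} applied to the quadratic functional $y \mapsto \|Q^{1/2} \tilde{R}_1 y\|_{L^2(D)}^2$, gives that this stochastic integral converges in distribution to $N(0,\,\mathrm{Tr}(Q \tilde{R}_1 Q_\infty^{(a,b)} \tilde{R}_1^*))$; multiplying by $-\tfrac{1}{2a H_T} \to -\tfrac{2a}{\mathrm{Tr}\,Q}$ yields the asserted variance.

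For part 2, I would write $\sqrt{T}(\tilde{b}_T - b) = Y_T^{-1}\sqrt{T}(H_T - b Y_T)$ and subtract $b$ times \eqref{YT via Ito} from \eqref{HT via Ito}; the deterministic ergodic levels $\tfrac{1}{4a}\mathrm{Tr}\,Q$ cancel exactly, leaving boundary contributions plus a stochastic integral driven by $R_1 - R_2$. A direct computation gives
\[
(R_1 - R_2)x = \left( 4a^2 A^{-1} x_1 + 2a A^{-1} x_2,\; -2a x_1 \right)^{\top},
\]
so that $\langle (R_1 - R_2) X^{x_0}(t), \Phi\, dB(t)\rangle_V = -\langle \tilde{R}_2 X^{x_0}(t), \Phi_1\, dB(t)\rangle_{L^2(D)}$, since again only the second component of $\Phi\, dB$ contributes and $\tilde{R}_2 X = 2a X_1$. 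As before, Lemma~\ref{convergence to zero} kills the boundary terms after normalization by $\sqrt{T}$, and the martingale CLT together with Theorem~\ref{ergodic theorem} produces the limit $N(0,\,\mathrm{Tr}(Q\tilde{R}_2 Q_\infty^{(a,b)} \tilde{R}_2^*))$ for the stochastic integral. Multiplying by $\tfrac{1}{2a Y_T}\to \tfrac{2b}{\mathrm{Tr}\,Q}$ and tracking the sign yields the announced variance $\tfrac{4b^2}{(\mathrm{Tr}\,Q)^2}\mathrm{Tr}(Q\tilde{R}_2 Q_\infty^{(a,b)} \tilde{R}_2^*)$.

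The only genuinely new technical point compared with Theorem~\ref{asymptotic normality of a hat and b hat} is the algebraic reorganization in part 2: one has to recognise that the deterministic parts of $H_T$ and $b Y_T$ cancel exactly, and that the combined stochastic integrand $(R_1 - R_2)X^{x_0}(t)$ couples to $\Phi\, dB(t)$ only through its first coordinate $-2a X_1^{x_0}(t) = -\tilde{R}_2 X^{x_0}(t)$. Once this cancellation is observed, the rest of the argument runs parallel to the proof of Theorem~\ref{asymptotic normality of a hat and b hat} and requires no new ingredients.
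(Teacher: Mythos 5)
Your proposal follows the paper's own proof essentially verbatim: the same decomposition via Lemma~\ref{representation of YT and HT}, disposal of the boundary terms by Lemma~\ref{convergence to zero}, the martingale CLT combined with the ergodic limit for the quadratic functional, and the same key observation in part 2 that the deterministic parts of $H_T$ and $bY_T$ cancel so that the combined stochastic integrand $(R_2 - R_1)X^{x_0}(t)$ couples to $\Phi\,dB(t)$ only through $\pm \tilde{R}_2 X^{x_0}(t) = \pm 2a X_1^{x_0}(t)$. The only blemish is a transcription slip in part 1, where the factor multiplying the stochastic integral should be $-\frac{1}{2H_T}$ rather than $-\frac{1}{2aH_T}$; since you state the correct limit $-\frac{2a}{\mathrm{Tr}\,Q}$ and the correct final variance, this does not affect the argument.
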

\begin{proof}
If we use formula \eqref{a tilde} for the estimator $\tilde{a}_T$ and representation \eqref{HT via Ito} for $H_T$ from Lemma \ref{representation of YT and HT}, we obtain
\begin{align}
\sqrt{T} \left( \tilde{a}_T - a \right) &= \sqrt{T} \left( \frac{1}{4H_T} \, \text{Tr} \, Q - a \right) = \frac{\sqrt{T}}{4H_T} \left( \text{Tr} \, Q - 4aH_T \right) \notag \\
&= \frac{\sqrt{T}}{4H_T} \left( \frac{1}{T} \left( \left\langle R_1 X^{x_0}(T), X^{x_0}(T) \right\rangle_V - \left\langle R_1 x_0, x_0 \right\rangle_V \right) \right. \notag \\
&\phantom{=} \left. - \frac{2}{T} \int_0^T \left\langle R_1 X^{x_0}(t), \Phi dB(t) \right\rangle_V \right) \notag \\
&= \frac{1}{4H_T} \frac{1}{\sqrt{T}} \left( \left\langle R_1 X^{x_0}(T), X^{x_0}(T) \right\rangle_V - \left\langle R_1 x_0, x_0 \right\rangle_V \right) \notag \\
&\phantom{=} - \frac{1}{2H_T} \frac{1}{\sqrt{T}} \int_0^T \left\langle R_1 X^{x_0}(t), \Phi dB(t) \right\rangle_V. \label{asymptotics for a tilde}
\end{align}

The first term $\frac{1}{\sqrt{T}} \left( \left\langle R_1 X^{x_0}(T), X^{x_0}(T) \right\rangle_V - \left\langle R_1 x_0, x_0 \right\rangle_V \right) \rightarrow 0$ in probability as $T \rightarrow \infty$ by Lemma \ref{convergence to zero}. Also define	
\begin{align}
q_1(T) &= \frac{1}{\sqrt{T}} \int_0^T \left\langle R_1 X^{x_0}(t), \Phi dB(t) \right\rangle_V \notag \\
&= \frac{1}{\sqrt{T}} \int_0^T \sum_{n=1}^{\infty} \sqrt{\lambda_n} \left\langle \tilde{R}_1 X^{x_0}(t), e^{\prime}_n \right\rangle_{L^2(D)} \, d\beta_n (t), \notag
\end{align}
where we have used the representation of $V$--valued Brownian motion $B(t)$.

By the central limit theorem for martingales, $\mathrm{Law} \left( q_1(T) \right)$ to the Gaussian distribution with a zero mean and variance given by
\begin{align}
\lim_{T \rightarrow \infty} \frac{1}{T} \int_0^T \sum_{n=1}^{\infty} \lambda_n \left\langle \tilde{R}_1 X^{x_0}(t), e^{\prime}_n \right\rangle_{L^2(D)}^2 \, dt 
&= \lim_{T \rightarrow \infty} \frac{1}{T} \int_0^T \| Q^{\frac{1}{2}} \tilde{R}_1 X^{x_0}(t) \|_{L^2(D)}^2 \, dt \notag \\
&= \text{Tr} \left( Q \tilde{R}_1 Q_{\infty}^{(a,b)} \tilde{R}_1^* \right). \notag
\end{align}

Since the multiplicative factor $- \frac{1}{2H_T}$ of $q_1(T)$ in \eqref{asymptotics for a tilde} converges to $- \frac{2a}{\text{Tr} \, Q}$ as $T \rightarrow \infty$, we arrive at
\begin{align}
\text{Law} (q_1(T)) &\stackrel{w^*}{\longrightarrow} N \left( 0, \, \text{Tr} \left( Q \tilde{R}_1 Q_{\infty}^{(a,b)} \tilde{R}_1^* \right) \right), \quad T \rightarrow \infty, \\
\text{Law} \left(\sqrt{T} \left( \tilde{a}_T - a \right) \right) &\stackrel{w^*}{\longrightarrow} N \left( 0, \, \frac{4a^2}{(\mathrm{Tr} \, Q)^2} \, \mathrm{Tr} \left( Q \tilde{R}_1 Q_{\infty}^{(a,b)} \tilde{R}_1^* \right) \right), \quad T \rightarrow \infty. \notag
\end{align}

Similarly, using formula \eqref{b tilde} for the estimator $\tilde{b}_T$ and Lemma \ref{representation of YT and HT} for representation of $Y_T$ and $H_T$, we may compute the following
\begin{align}
\sqrt{T} \left( \tilde{b}_T - b \right) &= \sqrt{T} \left( \frac{H_T}{Y_T} - b \right) = \frac{\sqrt{T}}{Y_T} \left( H_T - b Y_T \right) \notag \\
&= \frac{\sqrt{T}}{Y_T} \left( - \frac{1}{4aT} \left( \left\langle R_1 X^{x_0}(T), X^{x_0}(T) \right\rangle_V - \left\langle R_1 x_0, x_0 \right\rangle_V \right) \right. \notag \\
&\phantom{=} + \frac{1}{2aT} \int_0^T \left\langle R_1 X^{x_0}(t), \Phi dB(t) \right\rangle_V \notag \\
&\phantom{=}+ \frac{1}{4aT} \left( \left\langle R_2 X^{x_0}(T), X^{x_0}(T) \right\rangle_V - \left\langle R_2 x_0, x_0 \right\rangle_V \right) \notag \\
&\phantom{=} \left. - \frac{1}{2aT} \int_0^T \left\langle R_2 X^{x_0}(t), \Phi dB(t) \right\rangle_V \right) \notag \\
&= \frac{1}{4aY_T} \frac{1}{\sqrt{T}} \left( \left\langle (R_2 - R_1) X^{x_0}(T), X^{x_0}(T) \right\rangle_V - \left\langle (R_2 - R_1) x_0, x_0 \right\rangle_V \right) \notag \\
&\phantom{=} - \frac{1}{2aY_T} \frac{1}{\sqrt{T}} \int_0^T \left\langle (R_2 - R_1) X^{x_0}(t), \Phi dB(t) \right\rangle_V. \label{asymptotics for b tilde}
\end{align}

As above, the term
$$
\frac{1}{\sqrt{T}} \left( \left\langle (R_2 - R_1) X^{x_0}(T), X^{x_0}(T) \right\rangle_V - \left\langle (R_2 - R_1) x_0, x_0 \right\rangle_V \right) \rightarrow 0
$$
in probability as $T \rightarrow \infty$. If we denote
\begin{align}
q_2(T) &= \frac{1}{\sqrt{T}} \int_0^T \left\langle (R_2 - R_1) X^{x_0}(t), \Phi dB(t) \right\rangle_V \notag \\
&= \frac{1}{\sqrt{T}} \int_0^T \sum_{n=1}^{\infty} \sqrt{\lambda_n} \left\langle \tilde{R}_2 X^{x_0}(t), e^{\prime}_n \right\rangle_{L^2(D)} \, d\beta_n (t), \notag
\end{align}
then $\mathrm{Law} \left( q_2(T) \right)$ converges weakly to the Gaussian distribution with a zero mean and variance given by $\text{Tr} \left( Q \tilde{R}_2 Q_{\infty}^{(a,b)} \tilde{R}_2^* \right)$. Since the multiplicative factor $- \frac{1}{2aY_T}$ of $q_2(T)$ in \eqref{asymptotics for b tilde} converges to $- \frac{2b}{\text{Tr} \, Q}$ as $T \rightarrow \infty$, we obtain the result
$$
\text{Law} \left(\sqrt{T} \left( \tilde{b}_T - b \right) \right) \stackrel{w^*}{\longrightarrow} N \left( 0, \, \frac{4b^2}{(\mathrm{Tr} \, Q)^2} \, \mathrm{Tr} \left( Q \tilde{R}_2 Q_{\infty}^{(a,b)} \tilde{R}_2^* \right) \right), \quad T \rightarrow \infty.
$$
\end{proof}

\begin{remark} \label{forms of limiting variances}
It is also possible to specify the variance of the limiting Gaussian distribution of $q_1(T)$ and $q_2(T)$ as
\begin{equation} \label{trace QR1QR1}
\mathrm{Tr} \left( Q \tilde{R}_1 Q_{\infty}^{(a,b)} \tilde{R}_1^* \right) = \sum_{n=1}^{\infty} \sum_{k=1}^{\infty} \frac{2ab(\alpha_n + \alpha_k)}{b^2 (\alpha_n - \alpha_k)^2 + 8a^2 b(\alpha_n + \alpha_k)} \left\langle Qe_n, e_k \right\rangle_{L^2(D)}^2,
\end{equation}
\begin{equation} \label{trace QR2QR2}
\mathrm{Tr} \left( Q \tilde{R}_2 Q_{\infty}^{(a,b)} \tilde{R}_2^* \right) = \sum_{n=1}^{\infty} \sum_{k=1}^{\infty} \frac{16a^3}{b^2 (\alpha_n - \alpha_k)^2 + 8a^2 b(\alpha_n + \alpha_k)} \left\langle Qe_n, e_k \right\rangle_{L^2(D)}^2.
\end{equation}
\end{remark}

The family of estimators $(\tilde{a}_T, \tilde{b}_T)$ may be viewed as better than the family of estimators $(\hat{a}_T, \hat{b}_T)$, because their respective limiting variances are smaller, which is stated in the following Theorem.

\begin{theorem} \label{is smaller}
1) The limiting variance of $\sqrt{T} \left( \tilde{a}_T - a \right)$ is smaller than the li\-mi\-ting variance of $\sqrt{T} \left( \hat{a}_T - a \right)$, that is
\begin{equation} \label{comparing of a tilde and a hat}
\frac{4a^2}{\mathrm{Tr} \, Q^2} \, \mathrm{Tr} \left( Q \tilde{R}_1 Q_{\infty}^{(a,b)} \tilde{R}_1^* \right) < \frac{4a^2}{\mathrm{Tr} \, Q^2} \, \mathrm{Tr} \left( Q \tilde{R} Q_{\infty}^{(a,b)} \tilde{R}^* \right).
\end{equation}

2) The limiting variance of $\sqrt{T} \left( \tilde{b}_T - b \right)$ is smaller than the limiting variance of $\sqrt{T} \left( \hat{b}_T - b \right)$, that is
\begin{equation} \label{comparing of b tilde and b hat}
\frac{4b^2}{\mathrm{Tr} \, Q^2} \, \mathrm{Tr} \left( Q \tilde{R}_2 Q_{\infty}^{(a,b)} \tilde{R}_2^* \right) < \frac{4b^2 (b+1)^2}{\mathrm{Tr} \, Q^2} \, \mathrm{Tr} \left( Q \tilde{R} Q_{\infty}^{(a,b)} \tilde{R}^* \right).
\end{equation}
\end{theorem}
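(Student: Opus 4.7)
The plan is to invoke the explicit series representations recorded in Remark~\ref{form of limiting variance} and Remark~\ref{forms of limiting variances}, and to establish both inequalities by a term-by-term comparison. The crucial observation is that all three traces $\mathrm{Tr}(Q\tilde{R} Q_{\infty}^{(a,b)} \tilde{R}^*)$, $\mathrm{Tr}(Q\tilde{R}_1 Q_{\infty}^{(a,b)} \tilde{R}_1^*)$, $\mathrm{Tr}(Q\tilde{R}_2 Q_{\infty}^{(a,b)} \tilde{R}_2^*)$ share the common denominator $d_{nk} := b^2(\alpha_n-\alpha_k)^2 + 8a^2 b(\alpha_n+\alpha_k)$ and the common nonnegative weights $\langle Qe_n, e_k \rangle_{L^2(D)}^2$, so each claim reduces to comparing numerators.

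For part~(1), the factor $\frac{4a^2}{(\mathrm{Tr}\,Q)^2}$ is common to both sides of \eqref{comparing of a tilde and a hat} and cancels. Using \eqref{trace QR1QR1} and \eqref{trace QRQR}, the target inequality becomes
$$
\sum_{n,k} \frac{2ab(b+1)^2(\alpha_n+\alpha_k)}{d_{nk}(b+1)^2} \langle Qe_n, e_k \rangle_{L^2(D)}^2 \;<\; \sum_{n,k} \frac{16a^3 + 2ab(b+1)^2(\alpha_n+\alpha_k)}{d_{nk}(b+1)^2} \langle Qe_n, e_k \rangle_{L^2(D)}^2,
$$
so after subtraction it is exactly the statement that $\sum_{n,k} \frac{16a^3}{d_{nk}(b+1)^2}\langle Qe_n, e_k \rangle_{L^2(D)}^2 > 0$, which is clear since $a>0$, $d_{nk}>0$ and the weights are nonnegative.

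For part~(2), divide \eqref{comparing of b tilde and b hat} by $4b^2/(\mathrm{Tr}\,Q)^2$, reducing it to
$$
\mathrm{Tr}(Q\tilde{R}_2 Q_{\infty}^{(a,b)} \tilde{R}_2^*) \;<\; (b+1)^2\,\mathrm{Tr}(Q\tilde{R} Q_{\infty}^{(a,b)} \tilde{R}^*).
$$
Using \eqref{trace QR2QR2} and \eqref{trace QRQR}, the right-hand side acquires the numerator $16a^3 + 2ab(b+1)^2(\alpha_n+\alpha_k)$ over $d_{nk}$, while the left-hand side has numerator $16a^3$ over the same $d_{nk}$. Subtracting, the difference equals $\sum_{n,k} \frac{2ab(b+1)^2(\alpha_n+\alpha_k)}{d_{nk}}\langle Qe_n, e_k \rangle_{L^2(D)}^2$, which is positive since $a,b,\alpha_n > 0$.

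No serious obstacle arises: the entire argument is an algebraic identification of the "surplus" terms $\frac{16a^3}{d_{nk}(b+1)^2}$ and $\frac{2ab(b+1)^2(\alpha_n+\alpha_k)}{d_{nk}}$ respectively. The only subtle point is the \emph{strictness} of the inequality, for which I would invoke the standing assumption that $Q$ is strictly positive: then $\langle Qe_n, e_n\rangle_{L^2(D)} > 0$ for every $n$, so the diagonal terms $n=k$ of the surplus series are strictly positive, giving a strict inequality in the total sum. Convergence of the difference series is automatic since it is bounded by the sum of the two convergent traces from Remark~\ref{form of limiting variance} and Remark~\ref{forms of limiting variances}.
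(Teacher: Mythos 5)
Your proof is correct and follows essentially the same route as the paper: both arguments read off from the series in Remarks \ref{form of limiting variance} and \ref{forms of limiting variances} the decomposition $\mathrm{Tr}\left( Q \tilde{R} Q_{\infty}^{(a,b)} \tilde{R}^* \right) = \frac{1}{(b+1)^2}\,\mathrm{Tr}\left( Q \tilde{R}_2 Q_{\infty}^{(a,b)} \tilde{R}_2^* \right) + \mathrm{Tr}\left( Q \tilde{R}_1 Q_{\infty}^{(a,b)} \tilde{R}_1^* \right)$ and conclude by positivity of the surplus term. Your observation that strict positivity of $Q$ (hence $\left\langle Qe_n, e_n \right\rangle_{L^2(D)} > 0$) is what makes the inequalities strict is a small refinement that the paper leaves implicit.
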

\begin{proof}
By Remarks \ref{form of limiting variance} and \ref{forms of limiting variances}, $\text{Tr} \left( Q \tilde{R} Q_{\infty}^{(a,b)} \tilde{R}^* \right)$ equals to
\begin{align}
&\sum_{n=1}^{\infty} \sum_{k=1}^{\infty} \frac{16a^3 + 2ab(b+1)^2(\alpha_n + \alpha_k)}{b^2 (\alpha_n - \alpha_k)^2 + 8a^2 b(\alpha_n + \alpha_k)} \frac{1}{(b+1)^2} \left\langle Qe_n, e_k \right\rangle_{L^2(D)}^2 \notag \\
&= \sum_{n=1}^{\infty} \sum_{k=1}^{\infty} \frac{16a^3}{b^2 (\alpha_n - \alpha_k)^2 + 8a^2 b(\alpha_n + \alpha_k)} \frac{1}{(b+1)^2} \left\langle Qe_n, e_k \right\rangle_{L^2(D)}^2 \notag \\
&\phantom{=} + \sum_{n=1}^{\infty} \sum_{k=1}^{\infty} \frac{2ab(\alpha_n + \alpha_k)}{b^2 (\alpha_n - \alpha_k)^2 + 8a^2 b(\alpha_n + \alpha_k)} \left\langle Qe_n, e_k \right\rangle_{L^2(D)}^2 \notag \\
&= \frac{1}{(b+1)^2} \, \text{Tr} \left( Q \tilde{R}_2 Q_{\infty}^{(a,b)} \tilde{R}_2^* \right) + \text{Tr} \left( Q \tilde{R}_1 Q_{\infty}^{(a,b)} \tilde{R}_1^* \right). \notag
\end{align}

Since both above terms are positive, \eqref{comparing of a tilde and a hat} and \eqref{comparing of b tilde and b hat} follow.
\end{proof}

\begin{remark} \label{diagonal case remark}
If we consider so--called "diagonal case", that is $Qe_n = \lambda_n e_n$ for orthonormal basis $\{ e_n, n \in \mathbb N \}$ in $L^2(D)$, many of the previous formulae may be considerably simplified. The covariance operator $Q_{\infty}^{(a,b)}$ from Theorem \ref{form of Q infinity - theorem} will take the form
\begin{equation}
Q_{\infty}^{(a,b)} = \left( \begin{array}{cc} \frac{1}{4ab} \, Q & 0 \\ 0 & \frac{1}{4a} \, Q \end{array} \right),
\end{equation}
with the same trace given by Lemma \ref{trace of Q infinity},
$$
\text{Tr} \, Q_{\infty}^{(a,b)} = \frac{1}{4ab} \, \text{Tr} \, Q + \frac{1}{4a} \, \text{Tr} \, Q = \frac{b+1}{4ab} \, \text{Tr} \, Q.
$$

Also the limiting variances of Gaussian distributions in Theorems \ref{asymptotic normality of a hat and b hat} and \ref{asymptotic normality of a tilde and b tilde} may be further specified as
\begin{align}
\text{Law} \left( \sqrt{T} \left( \hat{a}_T - a \right) \right) &\stackrel{w^*}{\longrightarrow} N \left( 0, \, \frac{1}{(\text{Tr} \, Q)^2} \left( \frac{4a^3}{b(b+1)^2} \, \text{Tr} \left( Q^2 (-A)^{-1} \right) + a \, \text{Tr} \, Q^2 \right) \right), \notag \\
\text{Law} \left( \sqrt{T} \left( \hat{b}_T - b \right) \right) &\stackrel{w^*}{\longrightarrow} N \left( 0, \, \frac{1}{(\text{Tr} \, Q)^2} \left( 4ab \, \text{Tr} \left( Q^2 (-A)^{-1} \right) + \frac{b^2 (b+1)^2}{a} \, \text{Tr} \, Q^2 \right) \right), \notag \\
\text{Law} \left( \sqrt{T} \left( \tilde{a}_T - a \right) \right) &\stackrel{w^*}{\longrightarrow} N \left( 0, \, a \, \frac{\text{Tr} \, Q^2}{(\text{Tr} \, Q)^2} \right), \notag \\
\text{Law} \left( \sqrt{T} \left( \tilde{b}_T - b \right) \right) &\stackrel{w^*}{\longrightarrow} N \left( 0, \, 4ab \, \frac{\text{Tr} \left( Q^2 (-A)^{-1} \right)}{(\text{Tr} \, Q)^2} \right), \notag
\end{align}
for $T \rightarrow \infty$.
\end{remark}

\section{Examples} \label{section: examples}
\begin{example} \label{wave equation}
Consider the wave equation with Dirichlet boundary conditions
\begin{align}
\frac{\partial^2 u}{\partial t^2} (t, \xi) &= b\Delta u(t, \xi) - 2a \frac{\partial u}{\partial t}(t, \xi) + \eta (t, \xi), \quad (t, \xi) \in \mathbb R_+ \times D, \label{example 1} \\
u(0, \xi) &= u_1(\xi), \quad \xi \in D, \notag \\
\frac{\partial u}{\partial t} (0, \xi) &= u_2(\xi), \quad \xi \in D, \notag \\
u(t, \xi) &= 0, \quad (t, \xi) \in \mathbb R_+ \times \partial D, \notag
\end{align}
where $D \subset \mathbb R^d$ is a bounded domain with a smooth boundary, $\eta$ is a noise process that is the formal time derivative of a space dependent Brownian motion and $a > 0$, $b > 0$ are unknown parameters.

We rewrite the hyperbolic system \eqref{example 1} as an infinite dimensional stochastic di\-ffer\-en\-tial equation \eqref{linear equation with parameters}
\begin{align}
dX(t) &= \mathcal A X(t) \, dt + \Phi \, dB(t), \notag \\
X(0) &= x_0 = \left( \begin{array}{c}
u_1\\
u_2
\end{array} \right) \notag
\end{align}
for $t \geq 0$, setting $A = \Delta|_{\text{Dom}(A)}$, $\text{Dom}(A) = H^2(D) \cap H^1_0 (D)$, $\text{Dom}(\mathcal A) = \text{Dom}(A) \times \text{Dom}( (-A)^{\frac{1}{2}})$ and
$$
\mathcal A = \left( \begin{array}{cc}
0&I\\
bA&-2aI
\end{array} \right).
$$

The operator $\mathcal A$ generates strongly continuous semigroup in the space $V =$ \\ $\text{Dom}((-A)^{\frac{1}{2}}) \times L^2(D)$. The driving process may take a form $B(t) = (0, \tilde{B}(t))^{\top}$, where $(\tilde{B}(t), t \geq 0)$ is a standard cylindrical Brownian motion on $L^2(D)$. The noise $\eta$ is modelled as the formal derivative $\Phi_1 \, \frac{d\tilde{B}(t)}{dt}$, $\Phi_1 \in \mathcal L_2(L^2(D))$ and $\Phi \in \mathcal L_2(V)$ is given by
$$
\Phi = \left( \begin{array}{cc}
0&0\\
0&\Phi_1
\end{array} \right).
$$

With this setup, all assumptions of Section \ref{main results} are fulfilled, so Theorems \ref{strong consistency of a hat and b hat} and \ref{strong consistency of a tilde and b tilde} may be used for estimation of parameters. Theorems \ref{asymptotic normality of a hat and b hat} and \ref{asymptotic normality of a tilde and b tilde}, which show asymptotic normality of these estimators, may be applied as well.

The operator $Q = \Phi_1 \Phi_1^*$ which appears in the formulae for estimators established in these Theorems may be interpreted as the "covariance in space" of the driving process $(\tilde{B}(t), t \geq 0)$, that is
$$
\mathbb E \left\langle \tilde{B}(t, \cdot), x \right\rangle_{L^2(D)} \left\langle \tilde{B}(t, \cdot), y \right\rangle_{L^2(D)} = t \left\langle Qx, y \right\rangle_{L^2(D)}, \quad t \geq 0,
$$
for $x,y \in L^2(D)$ (cf. \cite{dapratozabczyk}).
\end{example}

\begin{example} \label{plate equation}
Consider the plate equation with Dirichlet boundary conditions
\begin{align}
\frac{\partial^2 u}{\partial t^2} (t, \xi) &= -b\Delta^2 u(t, \xi) - 2a \frac{\partial u}{\partial t}(t, \xi) + \eta (t, \xi), \quad (t, \xi) \in \mathbb R_+ \times D, \label{example 2} \\
u(0, \xi) &= u_1(\xi), \quad \xi \in D, \notag \\
\frac{\partial u}{\partial t} (0, \xi) &= u_2(\xi), \quad \xi \in D, \notag \\
u(t, \xi) &= 0, \quad (t, \xi) \in \mathbb R_+ \times \partial D, \notag
\end{align}
where $D$, $\eta$, $a$ and $b$ satisfy the conditions in Example \ref{wave equation}.

We rewrite the hyperbolic system \eqref{example 2} as an infinite dimensional stochastic di\-ffer\-en\-tial equation \eqref{linear equation with parameters}, setting $A = \Delta|_{\text{Dom}(A)}$, $\text{Dom}(A) = H^2(D) \cap H^1_0 (D)$, $\text{Dom}(\mathcal A) = \text{Dom}(A^2) \times \text{Dom}(A)$ and
$$
\mathcal A = \left( \begin{array}{cc}
0&I\\
-b A^2&-2aI
\end{array} \right).
$$

The operator $\mathcal A$ generates strongly continuous semigroup in the space $V = $ \\ $\text{Dom}(A) \times L^2(D)$. The driving process may take a form $B(t) = (0, \tilde{B}(t))^{\top}$, where $(\tilde{B}(t), t \geq 0)$ is a standard cylindrical Brownian motion on $L^2(D)$. The noise $\eta$ is modelled as the formal derivative $\Phi_1 \, \frac{d\tilde{B}(t)}{dt}$, $\Phi_1 \in \mathcal L_2(L^2(D))$ and $\Phi \in \mathcal L_2(V)$ is given by
$$
\Phi = \left( \begin{array}{cc}
0&0\\
0&\Phi_1
\end{array} \right).
$$

The interpretation of the noise term is the same as in Example \ref{wave equation}.

In this case, all assumptions made in Section \ref{main results} are satisfied.
\end{example}

\section{Implementation and statistical evidence} \label{section: implementation}
We have generated a trajectory of the solution to the stochastic differential equation \eqref{example 1} from Example \ref{wave equation} in the program $\mathsf R$ by Euler's method (see \cite{iacus}). The setup of Example \ref{wave equation} is specified as follows:

\begin{itemize}
\item $D = (0,1)$ -- We consider the wave equation for the oscillating rod modeled as a function from the space $L^2((0,1))$.
\item The choice of the orthonormal basis of the space $L^2((0,1))$ is
$$
\{e_n (\xi) = \sqrt{2} \sin (n \pi \xi), n = 1, \ldots, N\},
$$
the elements of which satisfy the boundary condition $u(t, 0) = 0 = u(t, 1)$, for any $t > 0$.
\item $N = 10$ -- Due to possible memory limitations, we have restricted the expansion of the previous basis only to $N = 10$ functions. The accuracy of our results may suffer due to this limitation, nevertheless we will show that our results are sufficiently satisfactory.
\item $T = 100$ -- The length of the time interval.
\item $\Delta t = 0.001$ -- The mesh of the partition of the time interval $[0, T]$.
\item The intial functions $u_1$ and $u_2$ have the following form
$$
u_1 (\xi) = \sqrt{2} \sum_{n=1}^{N} \sin (n \pi \xi) = u_2 (\xi).
$$
This means that $\left\langle u_1, e_n \right\rangle_{L^2(D)} = 1 = \left\langle u_2, e_n \right\rangle_{L^2(D)}$ for any $n = 1, \ldots, N$, so the initial conditions are the same in all $N$ dimensions.
\item $a = 1$, $b = 0.2$ -- The values of the parameters that are to be estimated.
\item $- \alpha_n = - n^2 \pi^2$ -- The eigenvalues of the operator $A$. With this setup the operator $A$ is the Laplacian operator $A = \Delta |_{\mathrm{Dom}(A)}$ with $\mathrm{Dom}(A) = H^2((0,1)) \cap H_0^1((0,1))$.
\item $\lambda_n = \frac{1000}{n^2}$ -- The eigenvalues of the operator $Q$. (The eigenvalues of the ope\-ra\-tor $\Phi_1$ equal to $\sqrt{\lambda_n}$ for any $n = 1, \ldots, N$.) The eigenvalues are chosen in the way that the sum $\sum_{n=1}^{\infty} \lambda_n$ is convergent. The multiplication factor is chosen in order to increase the values of the $\lambda_n$. Otherwise the noise would be in "higher" dimensions so small that it would be practically vanishing.
\item We consider the "diagonal case", i.e., the eigenvectors of the operators $A$ and $Q$ coincide and form the basis $\{ e_n (\cdot), n = 1, \ldots, N \}$.
\end{itemize}

From the generated trajectory, we obtained the following results: The value of the statistic $I_T$ (on which the estimators $\hat{a}_T$ and $\hat{b}_T$ are based on (see Theorem \ref{strong consistency of a hat and b hat})) is $I_T = 2740.959$, while the trace of the operator $Q_{\infty}^{(a,b)}$ equals to $\mathrm{Tr} \, Q_{\infty}^{(a,b)} = \frac{b+1}{4ab} \sum_{n=1}^N \lambda_n = 2324.652$ (since we have restricted ourselves to just $N=10$ dimensions, we use only the sum of the $N$ eigenvalues to compute $\mathrm{Tr} \, Q$). The estimators of $a$ and $b$ are $\hat{a}_T = 0.8481$ and $\hat{b}_T = 0.1646$ and their time evolution is shown in Figure \ref{Figure 1}.

\begin{figure}[h!]
\centering
\subfigure[The estimator $\hat{a}_t$]{\epsfig{file=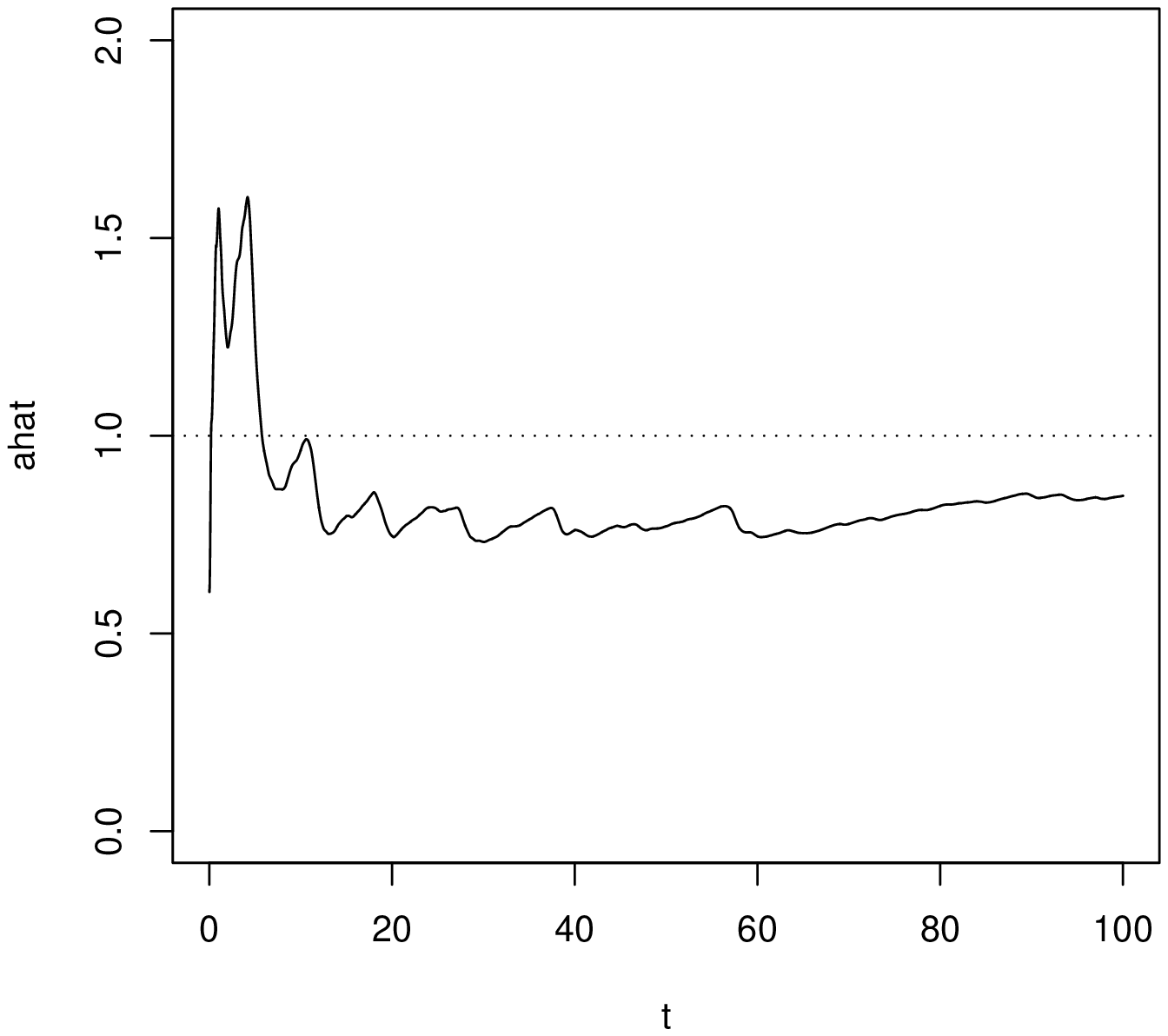,height=0.4\textwidth}}
\hspace{50pt}
\subfigure[The estimator $\hat{b}_t$]{\epsfig{file=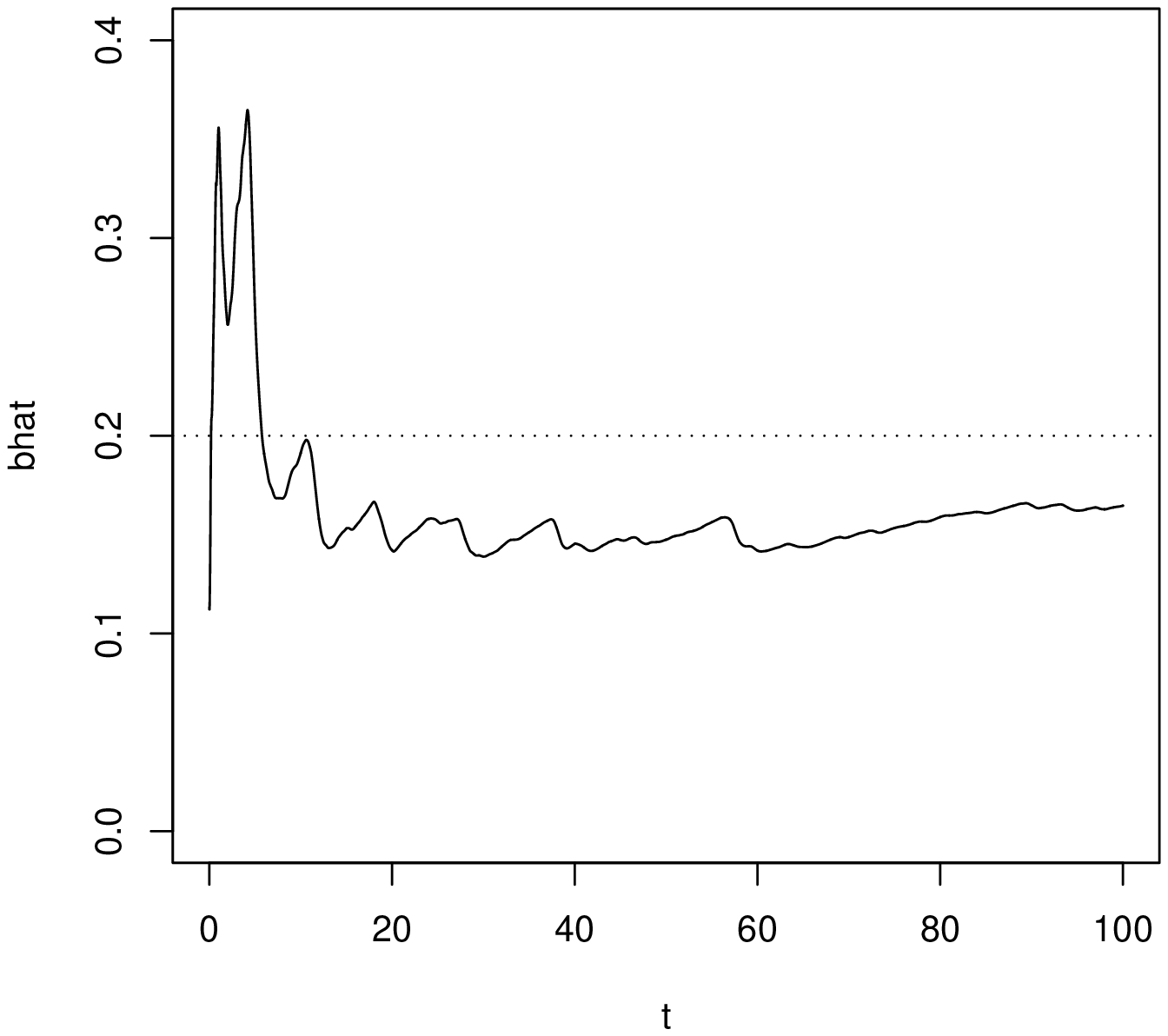,height=0.4\textwidth}}
\caption{The time evolution of the estimators $\hat{a}_t$ and $\hat{b}_t$} \label{Figure 1}
\end{figure}

Let us compute the estimators $\tilde{a}_T$ and $\tilde{b}_T$ from Theorem \ref{strong consistency of a tilde and b tilde}. The results are the following
$$
Y_T = 2330.218, \quad \frac{1}{4ab} \sum_{n=1}^N \lambda_n = 1937.210,
$$
$$
H_T = 410.741, \quad \frac{1}{4a} \sum_{n=1}^N \lambda_n = 387.442,
$$
$$
\tilde{a}_T = 0.9433, \quad \tilde{b}_T = 0.1763.
$$

Time evolution of the estimators $\tilde{a}_t$ and $\tilde{b}_t$ is shown in the Figure \ref{Figure 2}.

\begin{figure}[h!]
\centering
\subfigure[The estimator $\tilde{a}_t$]{\epsfig{file=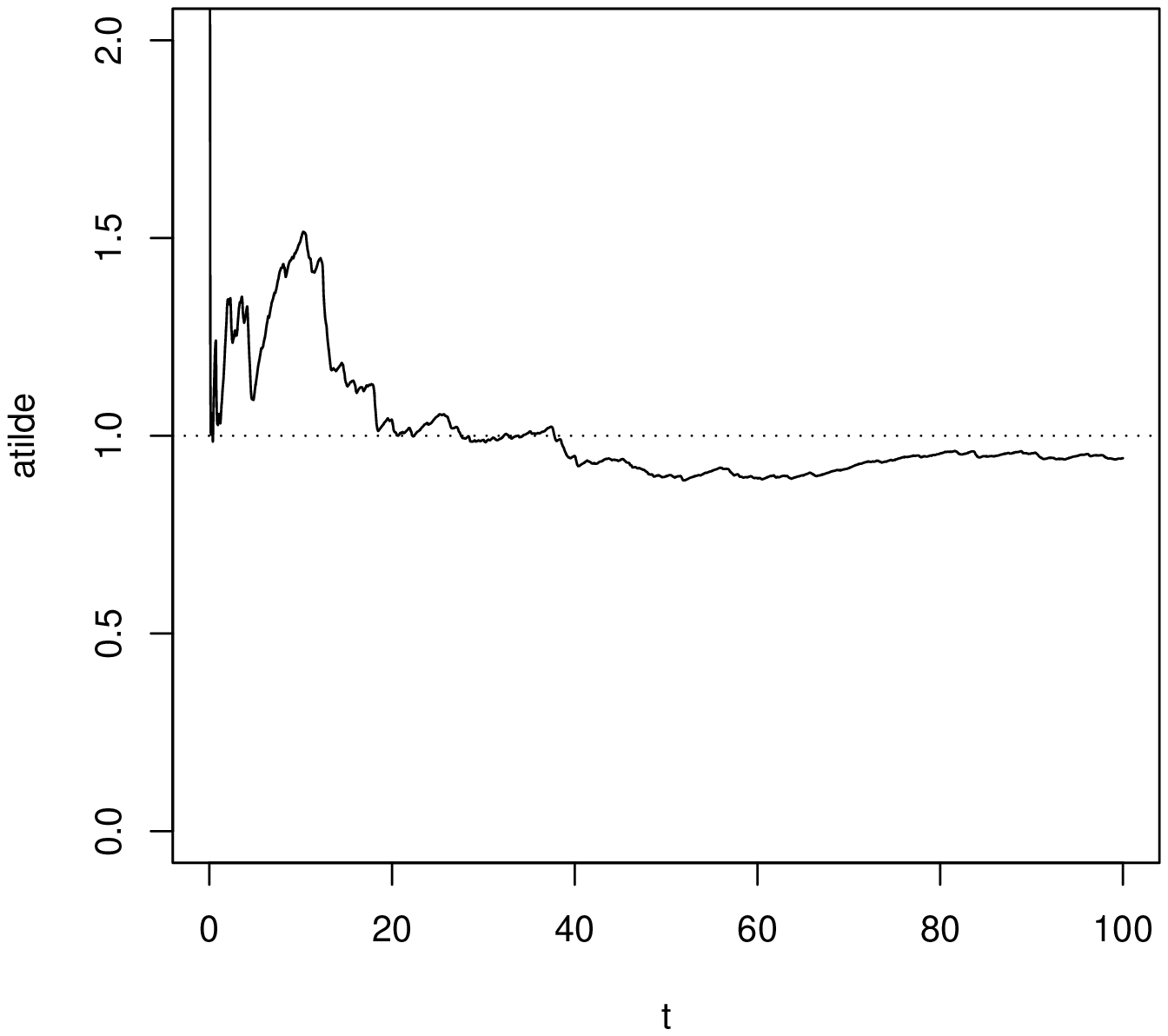,height=0.4\textwidth}}
\hspace{50pt}
\subfigure[The estimator $\tilde{b}_t$]{\epsfig{file=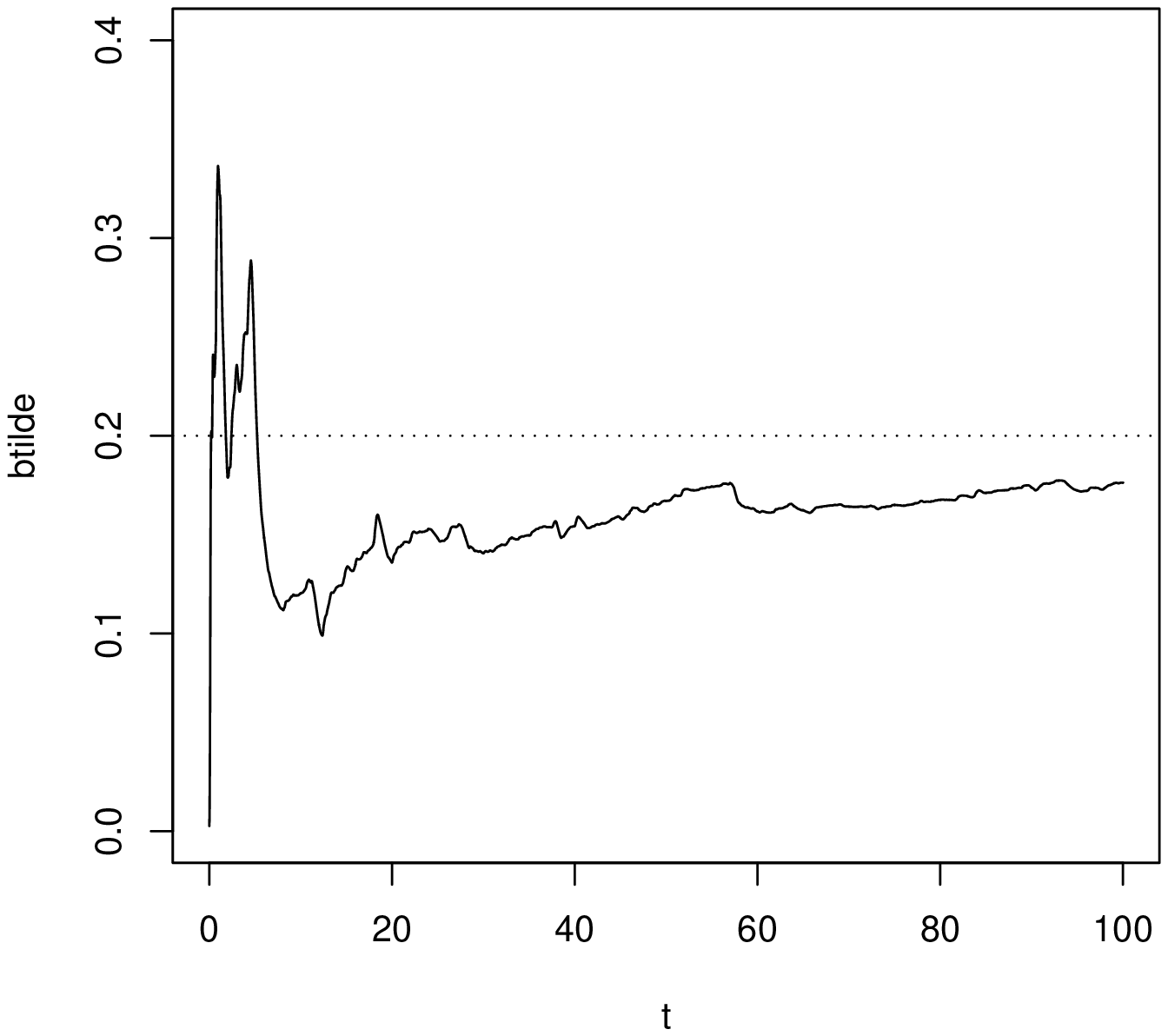,height=0.4\textwidth}}
\caption{The time evolution of the estimators $\tilde{a}_t$ and $\tilde{b}_t$} \label{Figure 2}
\end{figure}

From the figures (and also from the results) it seems that the family of estimators $(\tilde{a}_T, \tilde{b}_T)$ was better than the family $(\hat{a}_T, \hat{b}_T)$, nevertheless we have made $100$ more simulations in a similar manner. The values of the estimators $\hat{a}_T$ and $\hat{b}_T$ are depicted in Figure \ref{Figure 3} and the values of the estimators $\tilde{a}_T$ and $\tilde{b}_T$ are depicted in Figure \ref{Figure 4}. The overall statistics can be found in Table \ref{Table 1}.

\begin{figure}[h!]
\centering
\subfigure[The values of $\hat{a}_T$ -- Overall]{\epsfig{file=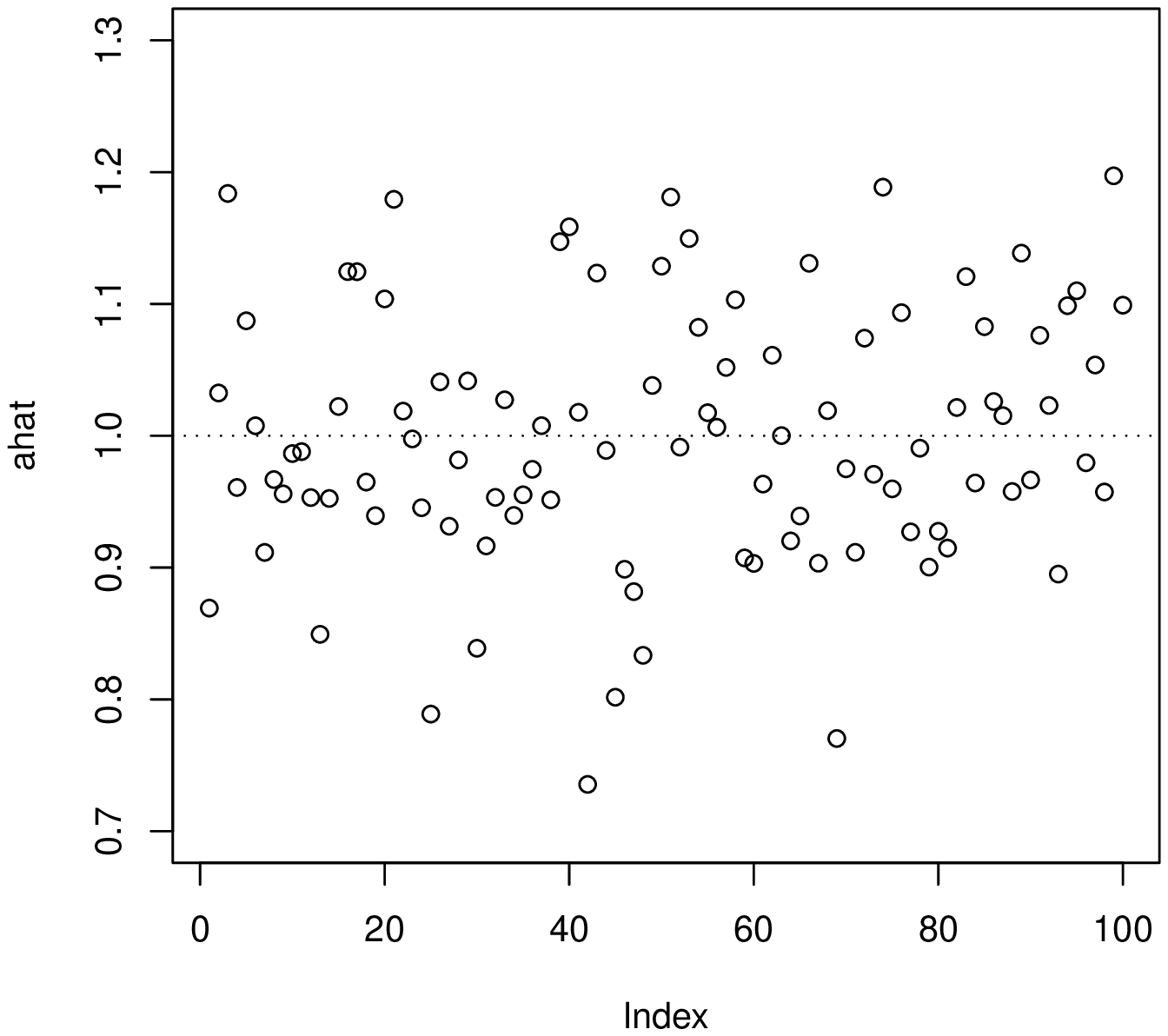,height=0.4\textwidth}}
\hspace{50pt}
\subfigure[The values of $\hat{b}_T$ -- Overall]{\epsfig{file=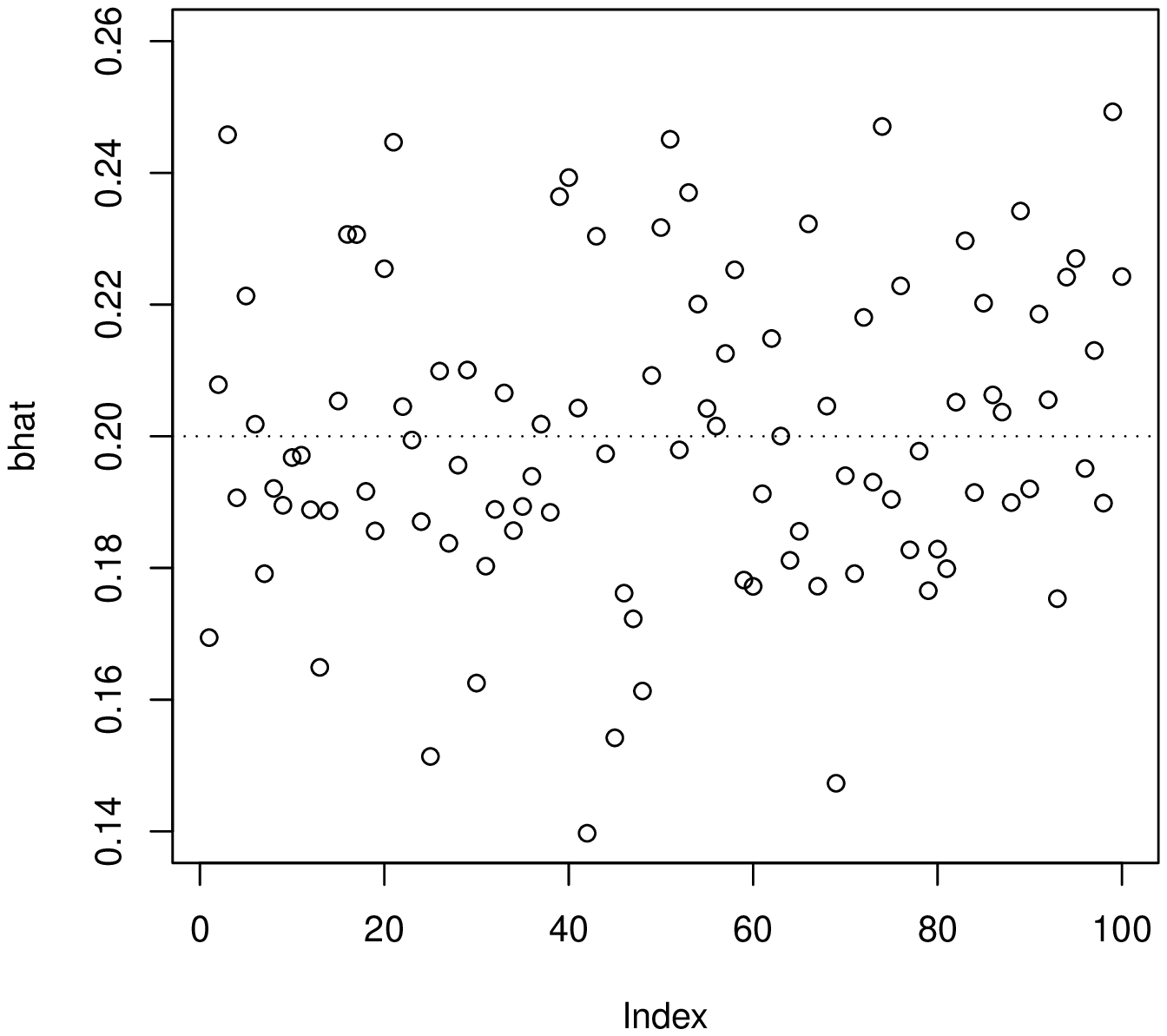,height=0.4\textwidth}}
\caption{The estimators $\hat{a}_T$ and $\hat{b}_T$ based on larger sample} \label{Figure 3}
\end{figure}

\begin{figure}[h!]
\centering
\subfigure[The values of $\tilde{a}_T$ -- Overall]{\epsfig{file=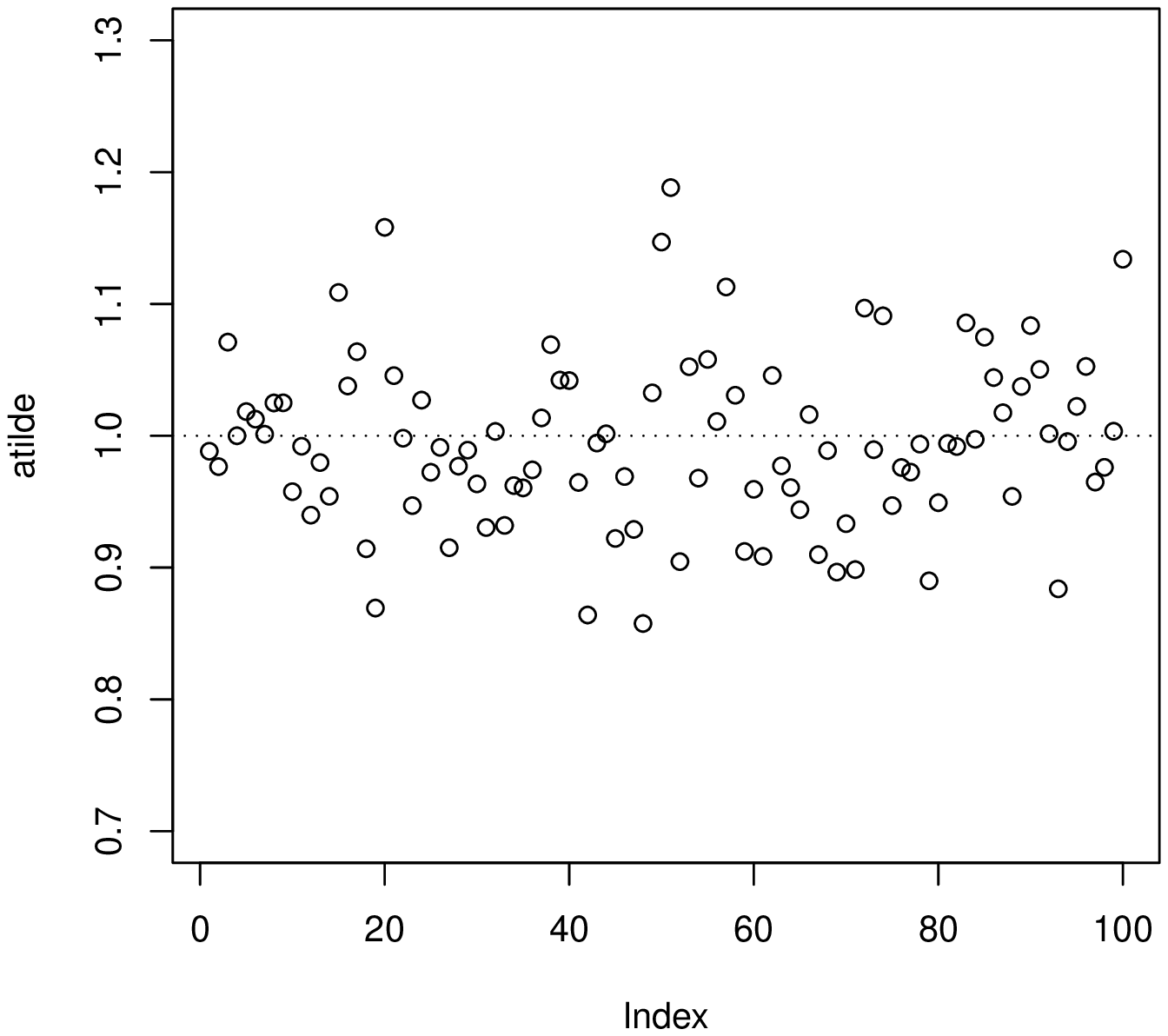,height=0.4\textwidth}}
\hspace{50pt}
\subfigure[The values of $\tilde{b}_T$ -- Overall]{\epsfig{file=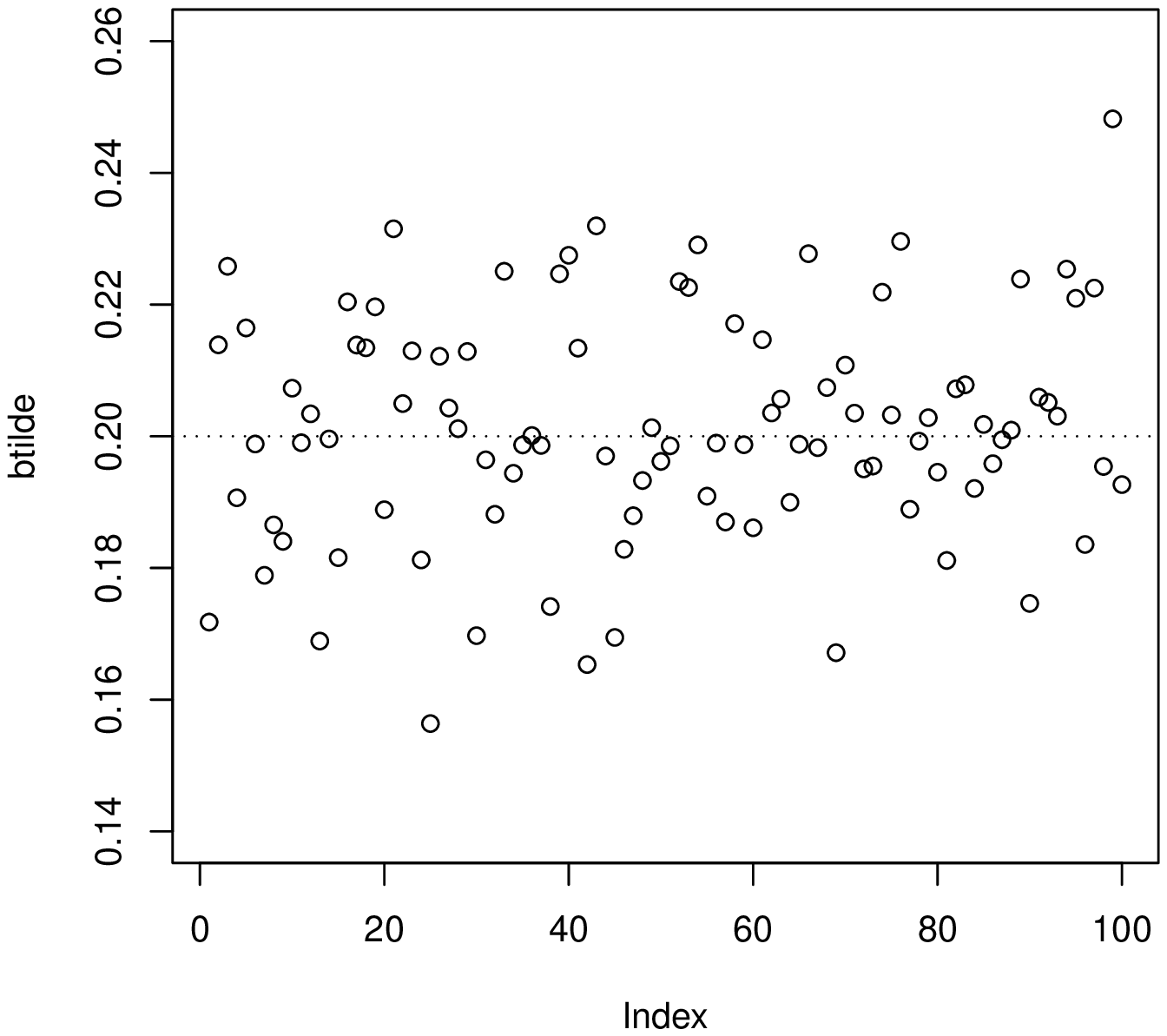,height=0.4\textwidth}}
\caption{The estimators $\tilde{a}_T$ and $\tilde{b}_T$ based on larger sample} \label{Figure 4}
\end{figure}

\begin{table}[h!]
\centering
\begin{tabular}{|l||c|c|c|c|} \hline
& $\hat{a}_T$ & $\hat{b}_T$ & $\tilde{a}_T$ & $\tilde{b}_T$ \\ \hline \hline
Mean & 0.9994 & 0.2003 & 0.9948 & 0.2013 \\ \hline
Var & 0.9473 & 0.0545 & 0.4218 & 0.0298 \\ \hline
Var -- Theoretical & 1.0466 & 0.0776 & 0.4505 & 0.0343 \\ \hline
Relative error -- Maximal & 26 \% & 30 \% & 20 \% & 24 \% \\ \hline
Relative error -- Typical & $\leq$ 10 \% & $\leq$ 10 \% & $\leq$ 5 \% & $\leq$ 7 \% \\ \hline
$p$--value & 0.2746 & 0.2728 & 0.3790 & 0.5800 \\ \hline
\end{tabular}
\bigskip
\caption{The results of the simulation} \label{Table 1}
\end{table}

The row "Var" stands for the variance of $\sqrt{T}(\hat{a} - a)$ (and its analogues in the following columns). The actual variances of the estimators are $100$ times smaller. The theoretical values of the limiting variances (see formulae in Remark \ref{diagonal case remark}) can be found in the row "Var -- Theoretical".

Since the absolute errors of the estimators can be viewed in Figures \ref{Figure 3} and \ref{Figure 4}, we mention only relative errors: maximal (which is the relative error of the worst estimator) and typical (that is the level below which $75$ \% of the errors belong).

The $p$--values of the Wilk--Shapiro test of normality can be found in the last row. Since they are greater than $0.05$, we do not reject the hypothesis of normality on $5 \%$--significance level. The Q--Q plots of the centered and rescaled estimators are shown in Figures \ref{Figure 5} and \ref{Figure 6}.

\begin{figure}[h!]
\centering
\subfigure[Q--Q plot of $\sqrt{T}(\hat{a}_T - a)$]{\epsfig{file=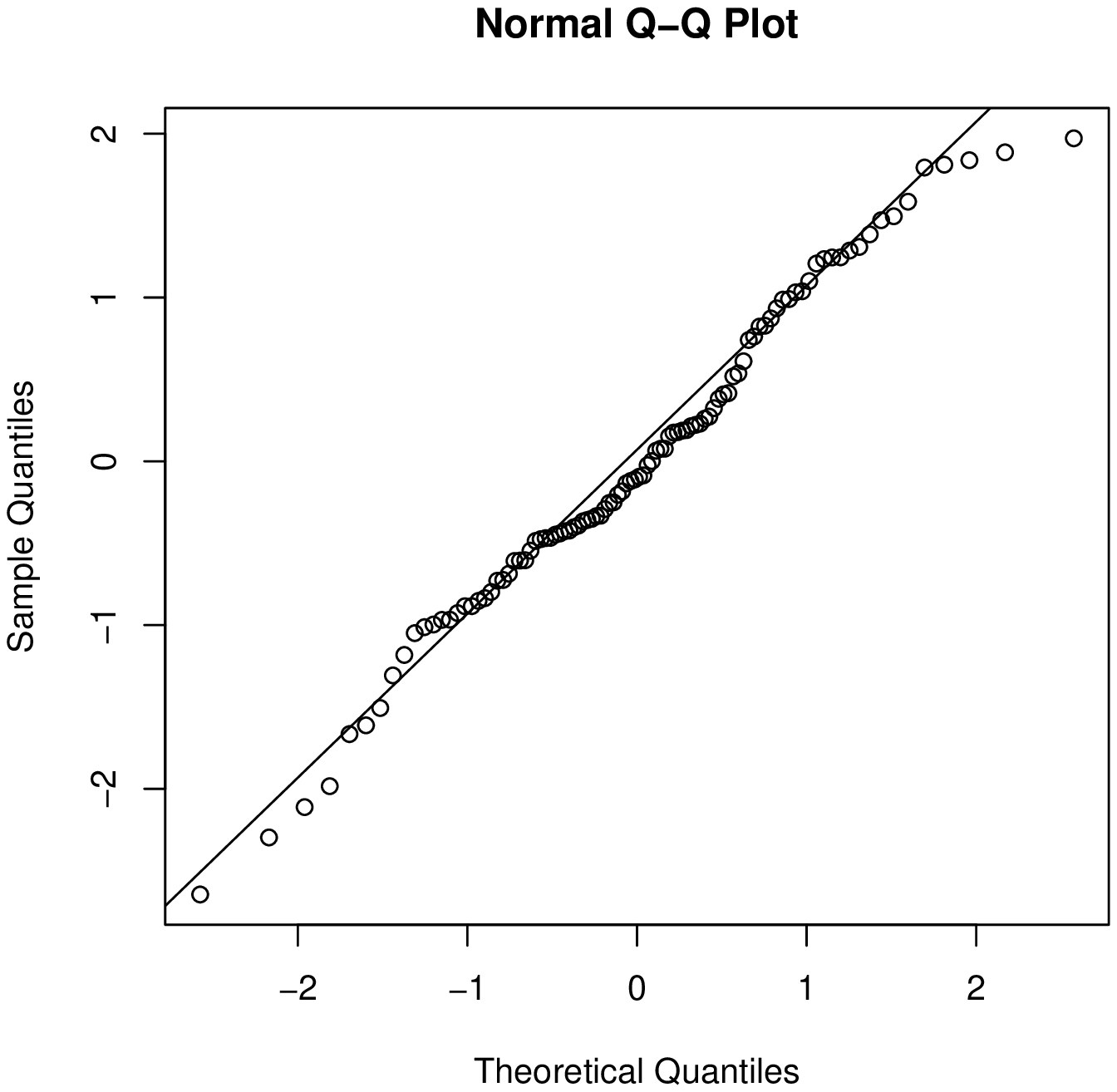,height=0.4\textwidth}}
\hspace{50pt}
\subfigure[Q--Q plot of $\sqrt{T}(\hat{b}_T - b)$]{\epsfig{file=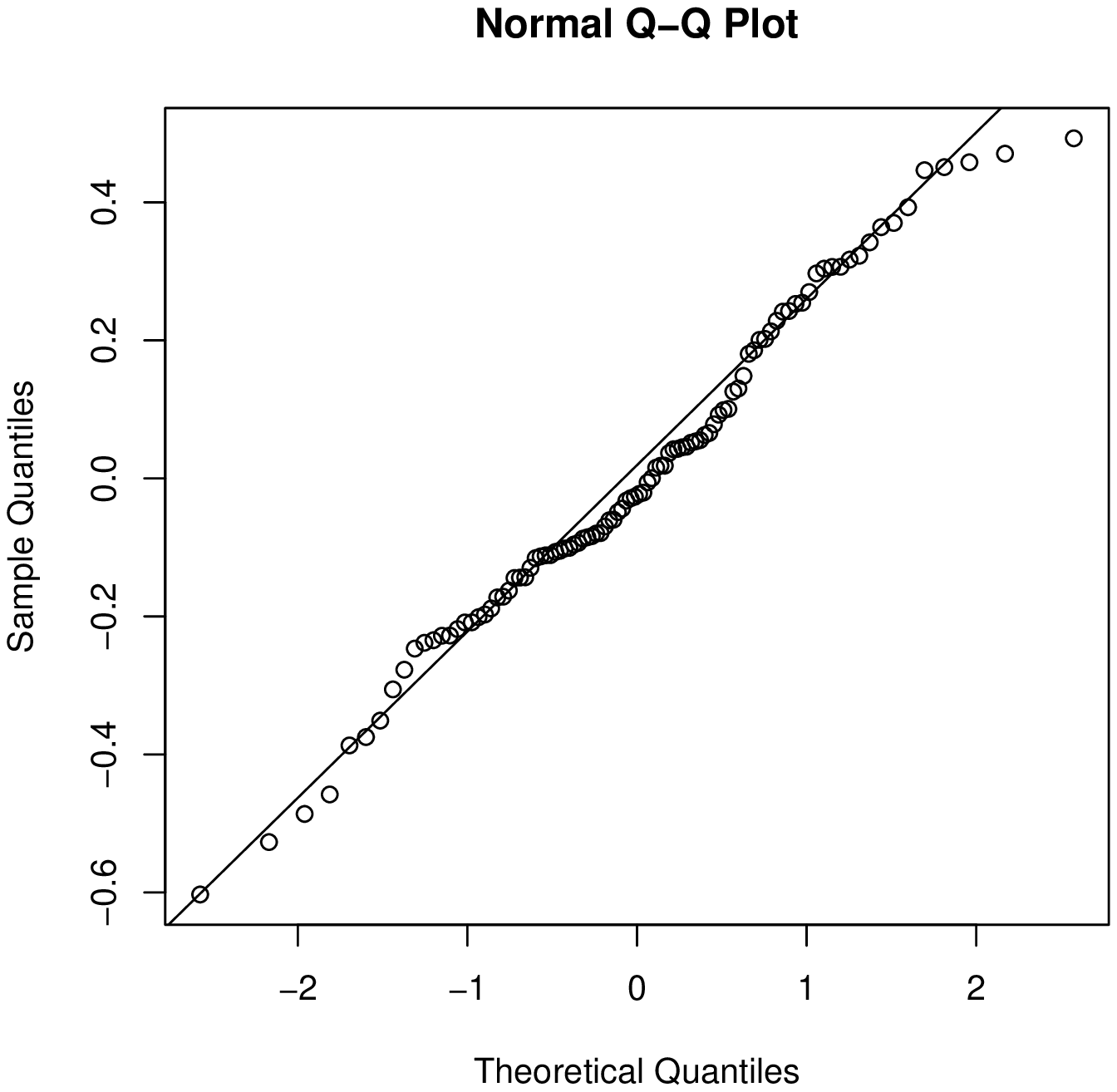,height=0.4\textwidth}}
\caption{Asymptotic normality of $\hat{a}_T$ and $\hat{b}_T$} \label{Figure 5}
\end{figure}

\begin{figure}[h!]
\centering
\subfigure[Q--Q plot of $\sqrt{T}(\tilde{a}_T - a)$]{\epsfig{file=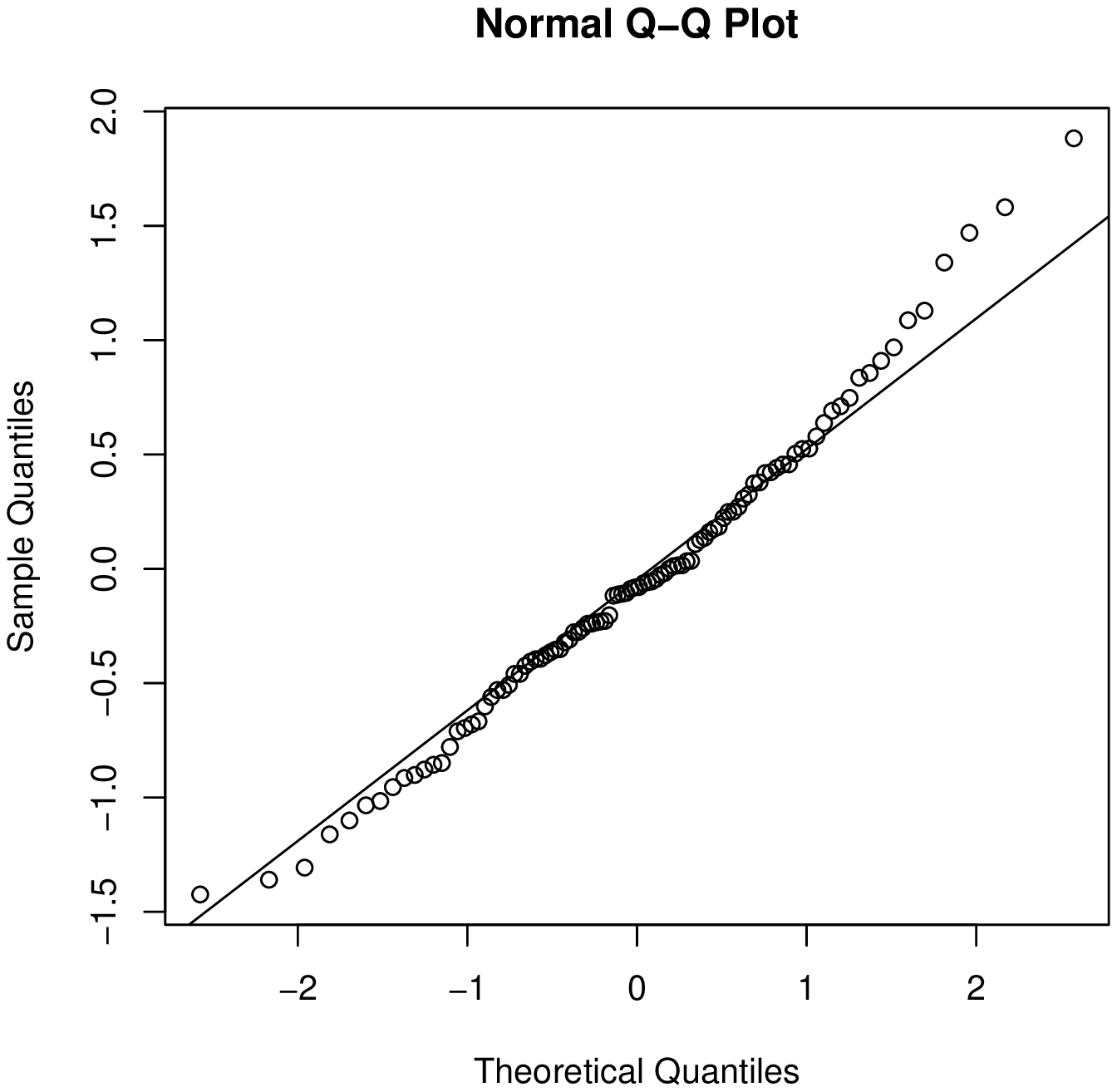,height=0.4\textwidth}}
\hspace{50pt}
\subfigure[Q--Q plot of $\sqrt{T}(\tilde{b}_T - b)$]{\epsfig{file=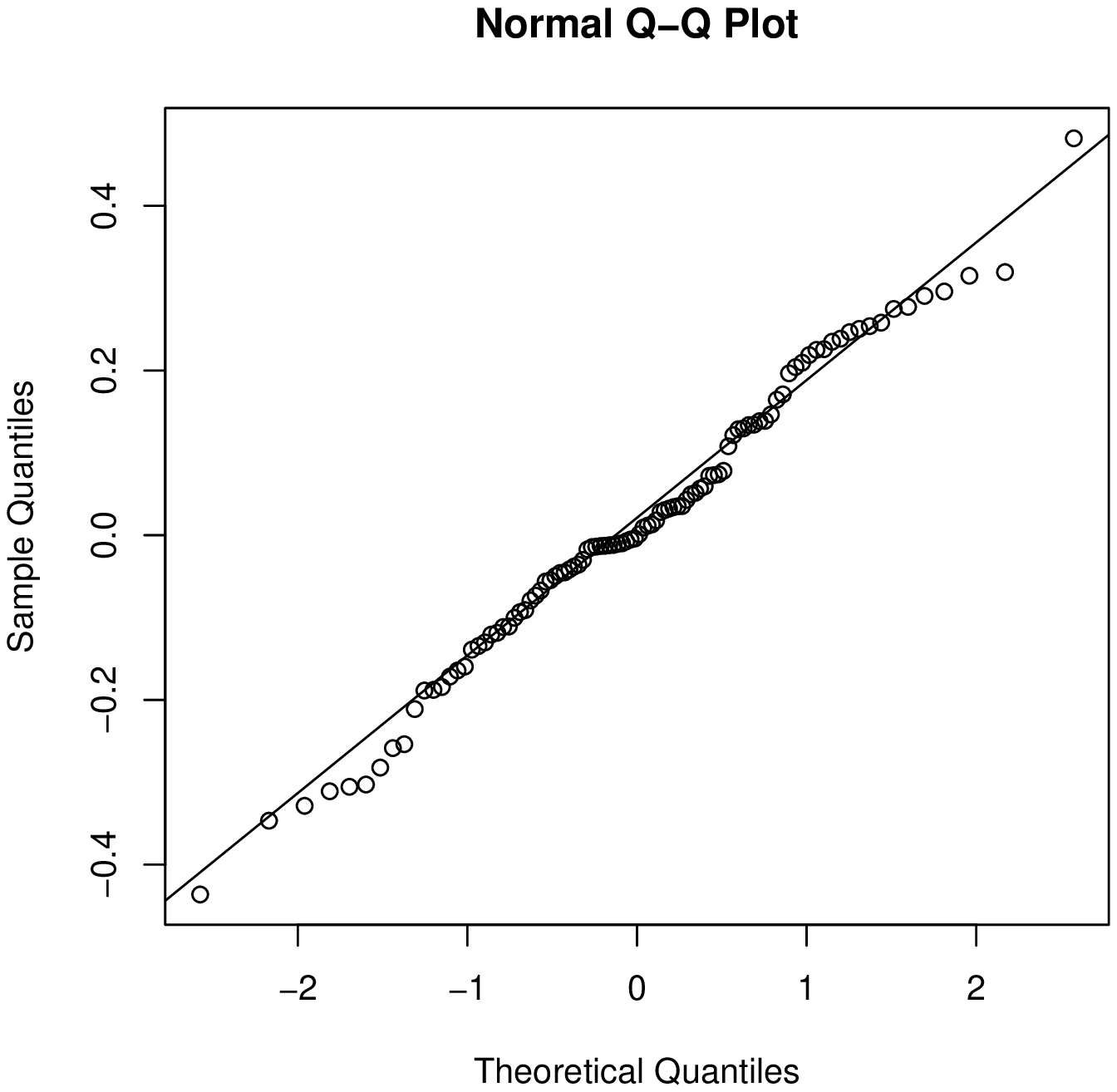,height=0.4\textwidth}}
\caption{Asymptotic normality of $\tilde{a}_T$ and $\tilde{b}_T$} \label{Figure 6}
\end{figure}

From the previous simulations the main three observations follow:

\begin{itemize}
\item The family of the estimators $(\tilde{a}_T, \tilde{b}_T)$ has similar mean as the family $(\hat{a}_T, \hat{b}_T)$, but in addition it has smaller variances and smaller relative errors. That behaviour is the consequence of Theorem \ref{is smaller}.
\item From the comparing of the rows "Var" and "Var -- Theoretical" it seems that the limiting variances from Remark \ref{diagonal case remark} are accurate.
\item From the Figures \ref{Figure 5}, \ref{Figure 6} and from the results of the Wilk--Shapiro tests it seems that the estimators are asymptotically normally distributed as prescribed.
\end{itemize}

Although these results for time $T = 100$ are satisfactory enough, we have also made simulations for time $T = 1000$. The results from one particular trajectory are the following
$$
I_T = 2360.458, \quad \frac{b+1}{4ab} \sum_{n=1}^N \lambda_n = 2324.652,
$$
$$
\hat{a}_T = 0.9848, \quad \hat{b}_T = 0.1964,
$$
$$
Y_T = 1975.777, \quad \frac{1}{4ab} \sum_{n=1}^N \lambda_n = 1937.210,
$$
$$
H_T = 384.681, \quad \frac{1}{4a} \sum_{n=1}^N \lambda_n = 387.442,
$$
$$
\tilde{a}_T = 1.0072, \quad \tilde{b}_T = 0.1947.
$$

Time evolution of the estimators $(\hat{a}_T, \hat{b}_T)$ is shown in Figure \ref{Figure 7} and time evolution of the estimators $(\tilde{a}_T, \tilde{b}_T)$ can be seen in Figure \ref{Figure 8}.

\begin{figure}[h!]
\centering
\subfigure[The estimator $\hat{a}_t$]{\epsfig{file=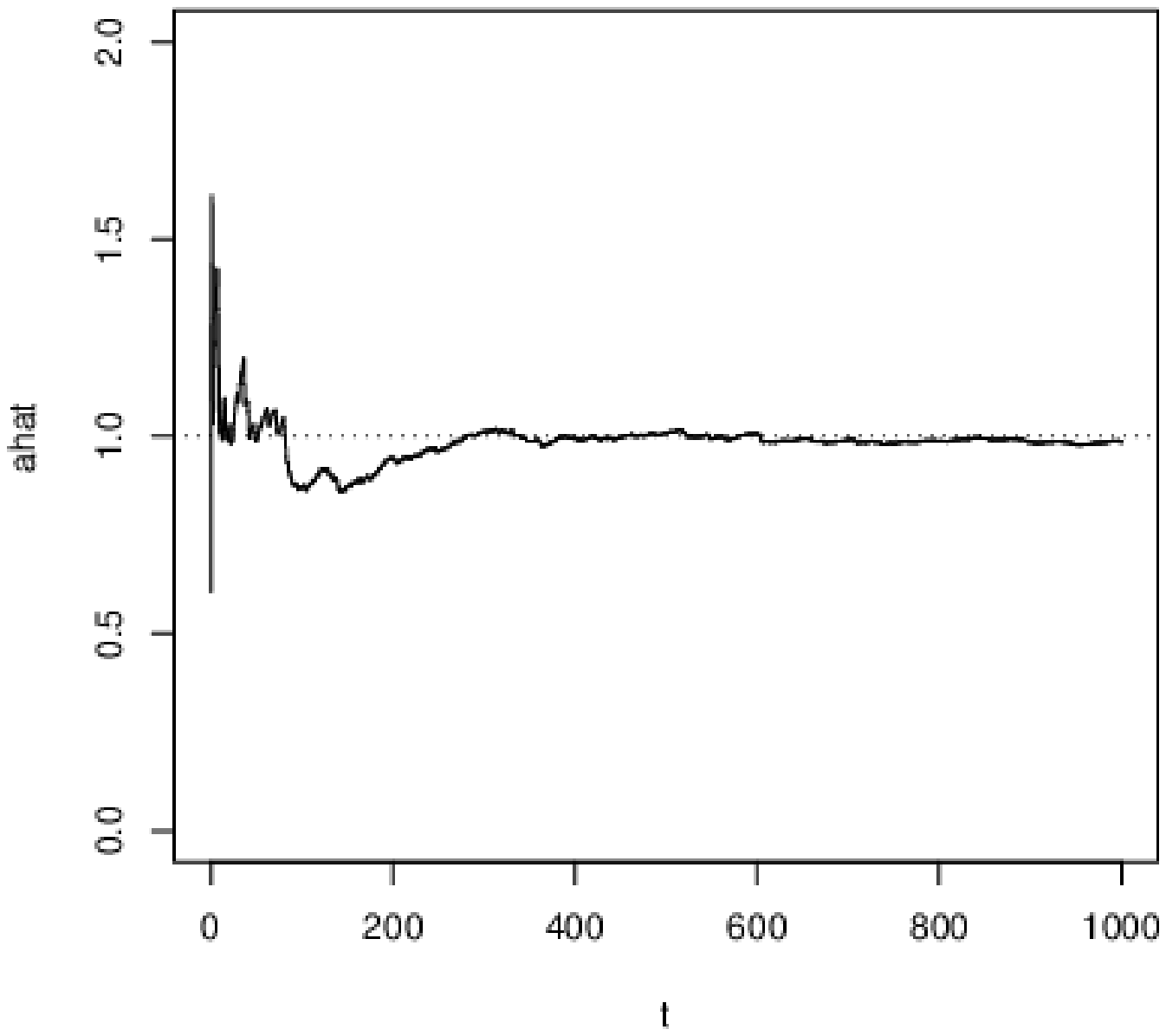,height=0.4\textwidth}}
\hspace{50pt}
\subfigure[The estimator $\hat{b}_t$]{\epsfig{file=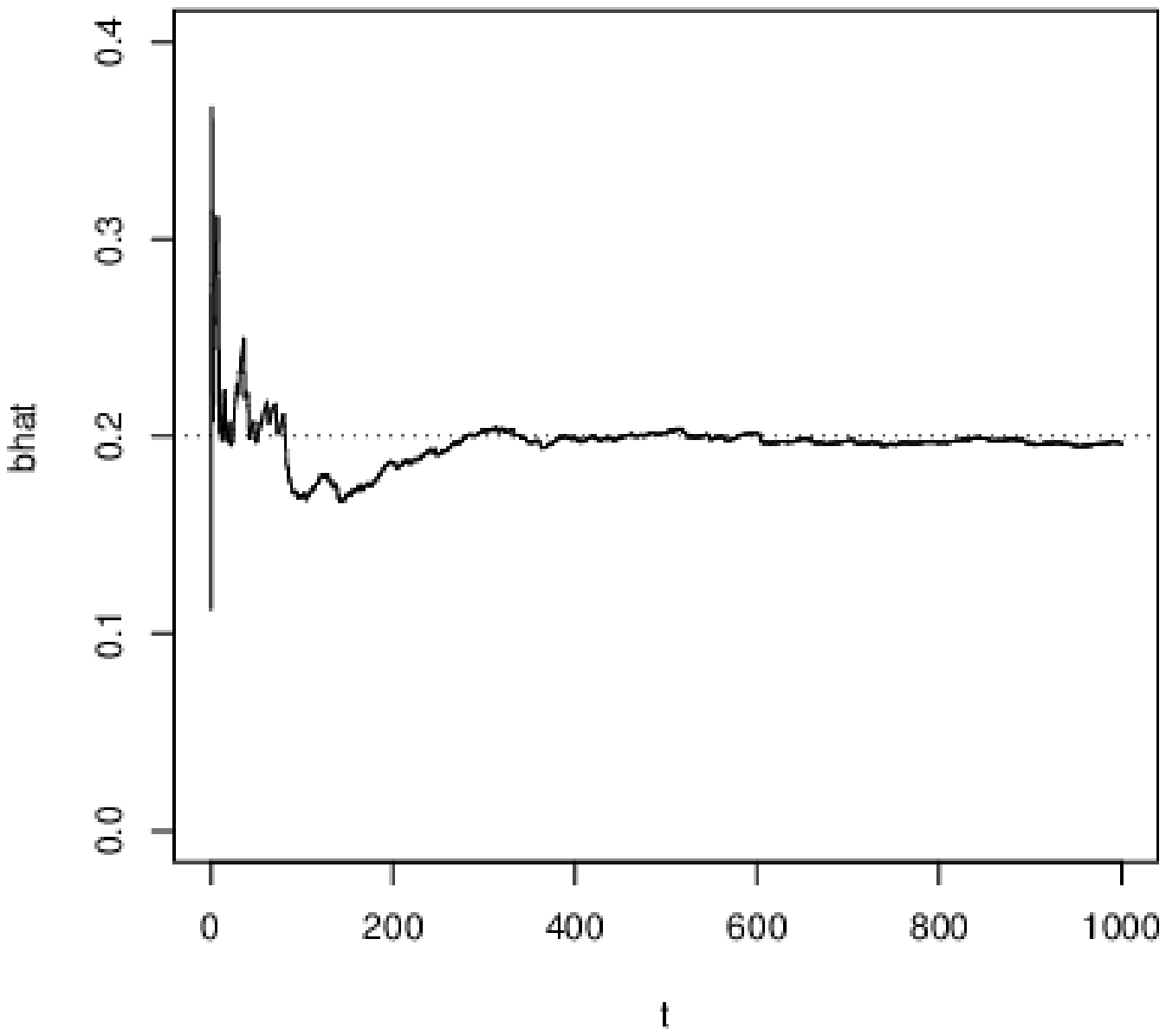,height=0.4\textwidth}}
\caption{The time evolution of the estimators $\hat{a}_t$ and $\hat{b}_t$, $T = 1000$} \label{Figure 7}
\end{figure}

\begin{figure}[h!]
\centering
\subfigure[The estimator $\tilde{a}_t$]{\epsfig{file=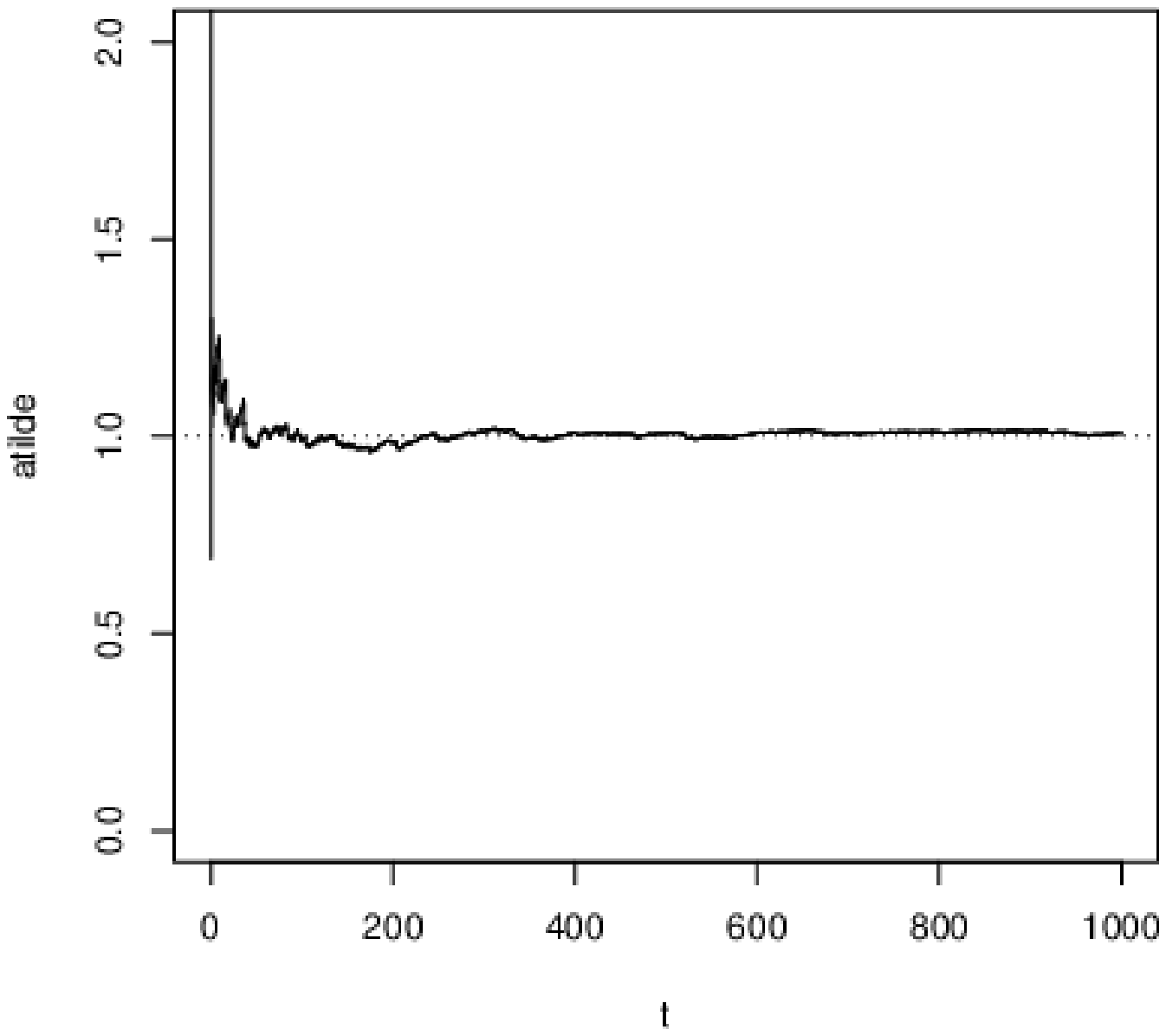,height=0.4\textwidth}}
\hspace{50pt}
\subfigure[The estimator $\tilde{b}_t$]{\epsfig{file=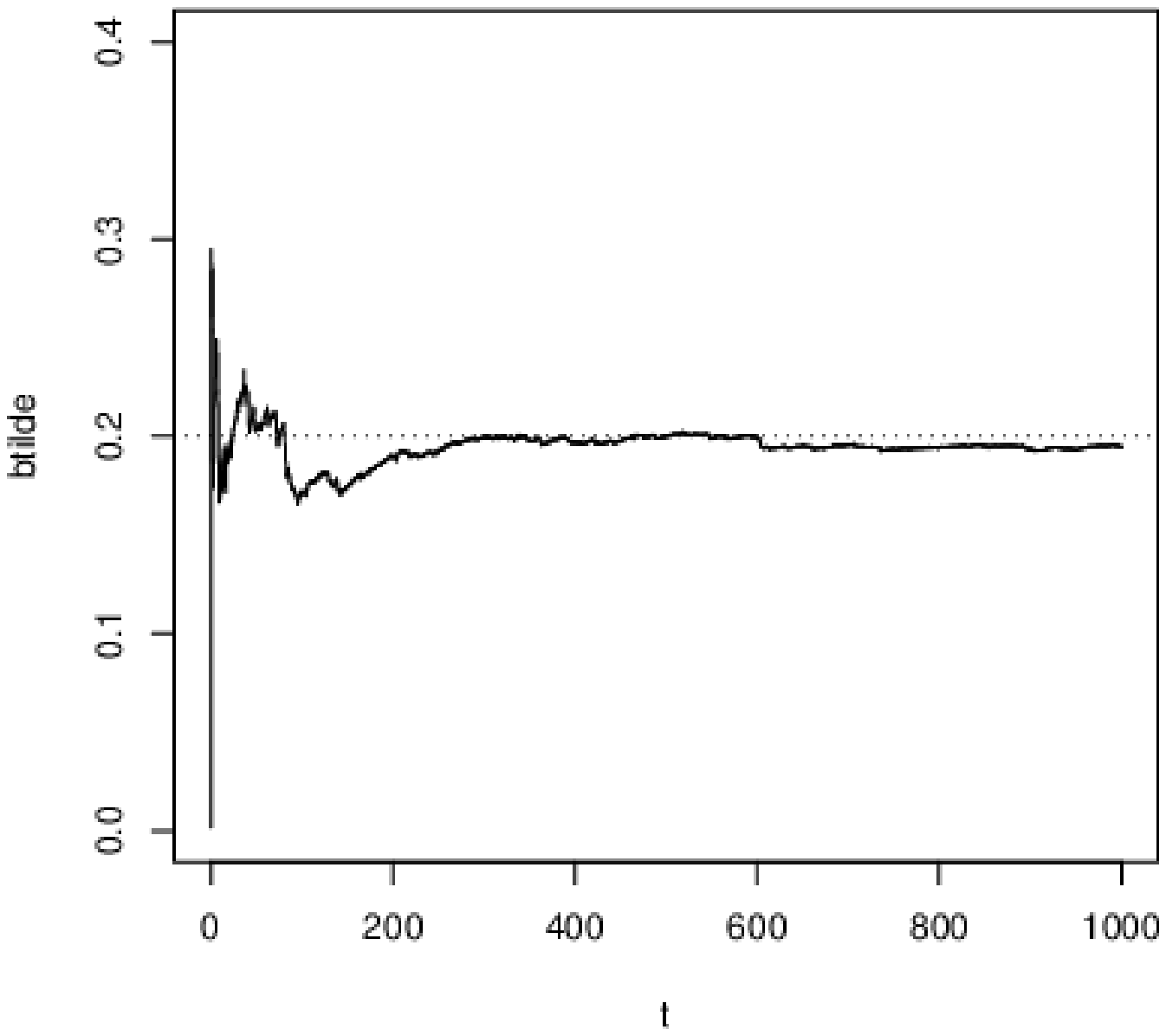,height=0.4\textwidth}}
\caption{The time evolution of the estimators $\tilde{a}_t$ and $\tilde{b}_t$, $T = 1000$} \label{Figure 8}
\end{figure}

From this one particular trajectory it seems that the families $(\hat{a}_T, \hat{b}_T)$ and $(\tilde{a}_T, \tilde{b}_T)$ do not differ much, but let us take a closer look at the results of $100$ simulations. Figures \ref{Figure 9} and \ref{Figure 10} show values of all obtained estimators with corresponding Q--Q plots depicted in Figures \ref{Figure 11} and \ref{Figure 12}. The overall statistics can be found in Table \ref{Table 2} with the same meaning as above.

\begin{figure}[h!]
\centering
\subfigure[The values of $\hat{a}_T$ -- Overall]{\epsfig{file=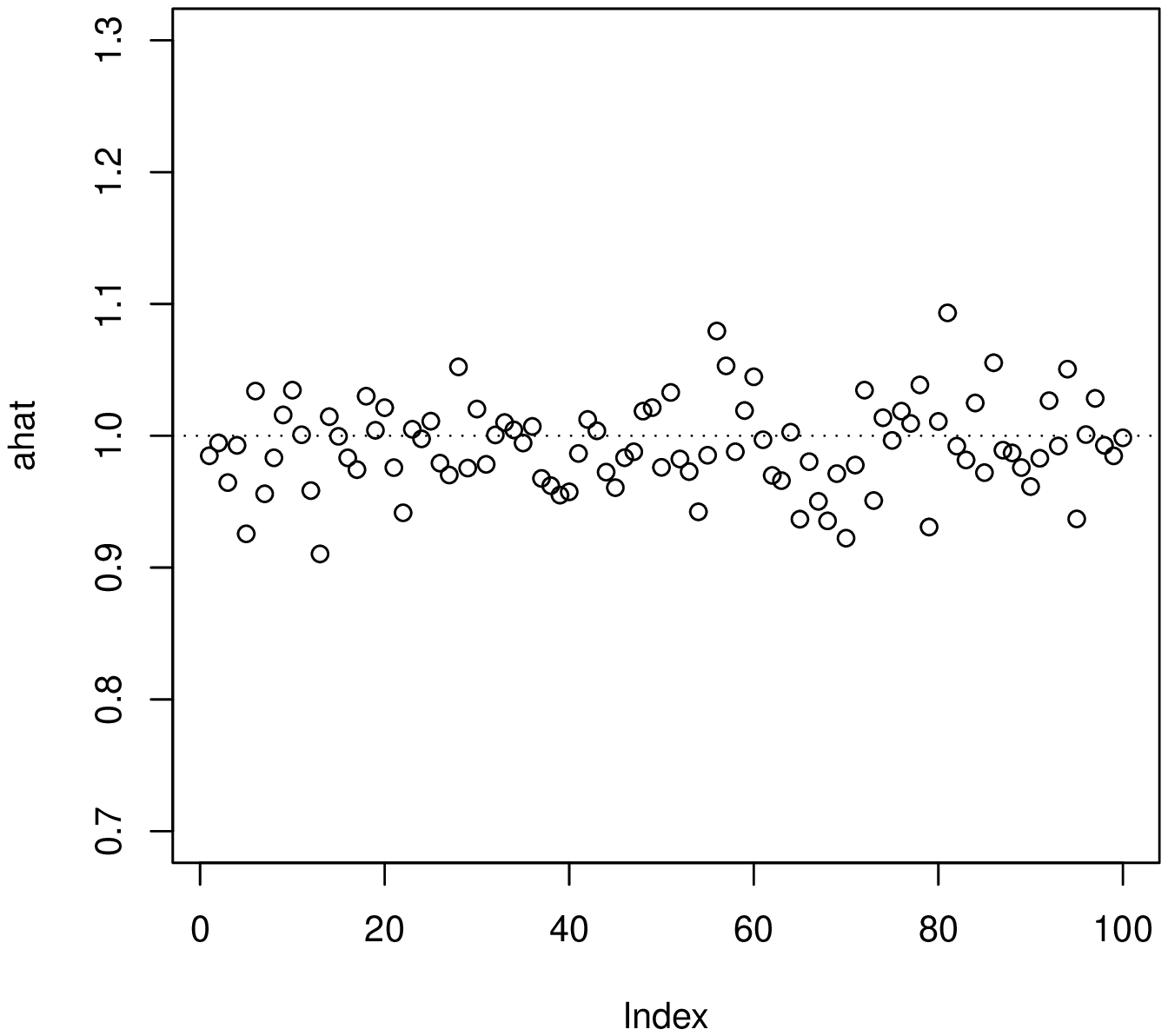,height=0.4\textwidth}}
\hspace{50pt}
\subfigure[The values of $\hat{b}_T$ -- Overall]{\epsfig{file=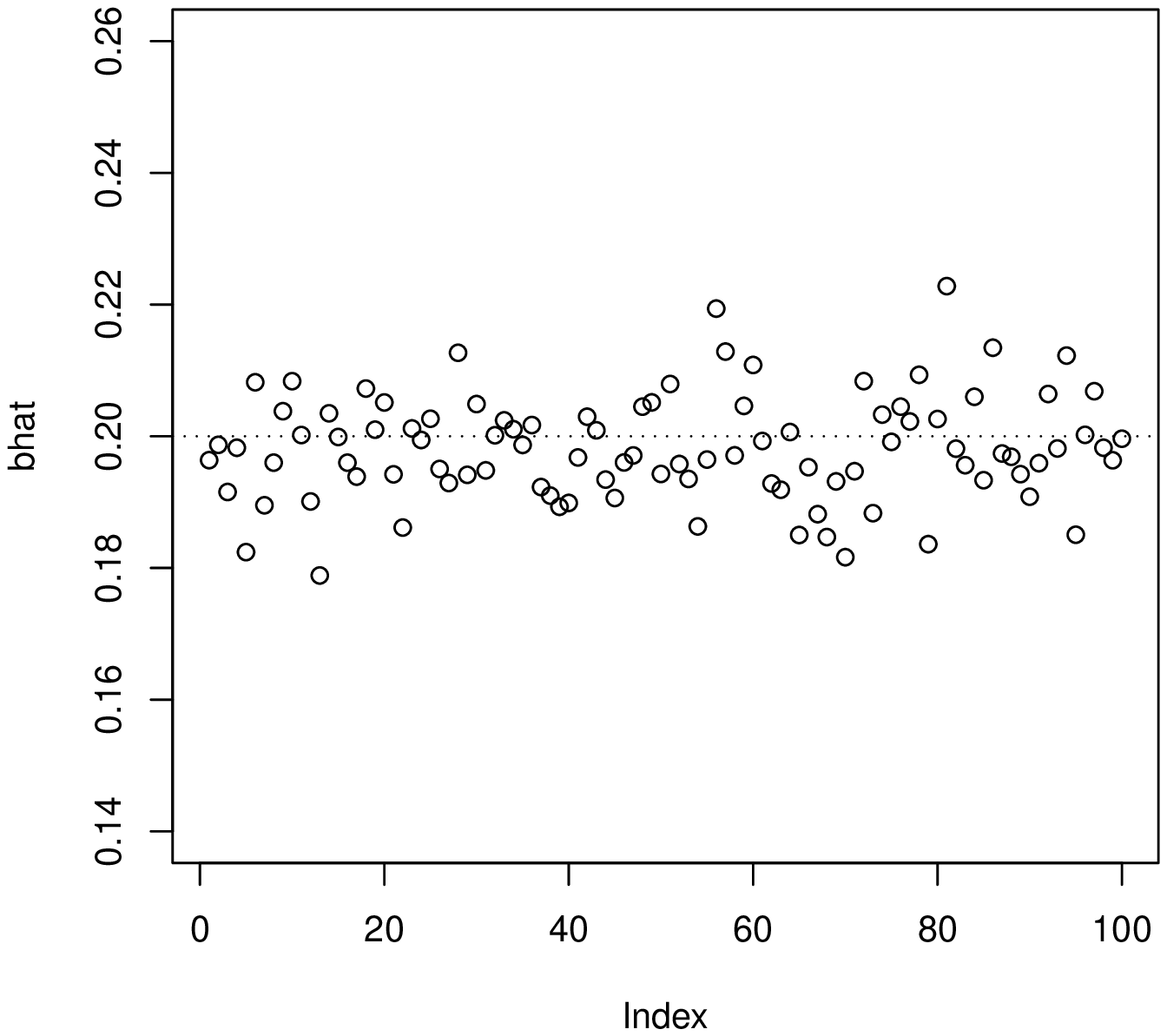,height=0.4\textwidth}}
\caption{The estimators $\hat{a}_T$ and $\hat{b}_T$ based on larger sample, $T = 1000$} \label{Figure 9}
\end{figure}

\begin{figure}[h!]
\centering
\subfigure[The values of $\tilde{a}_T$ -- Overall]{\epsfig{file=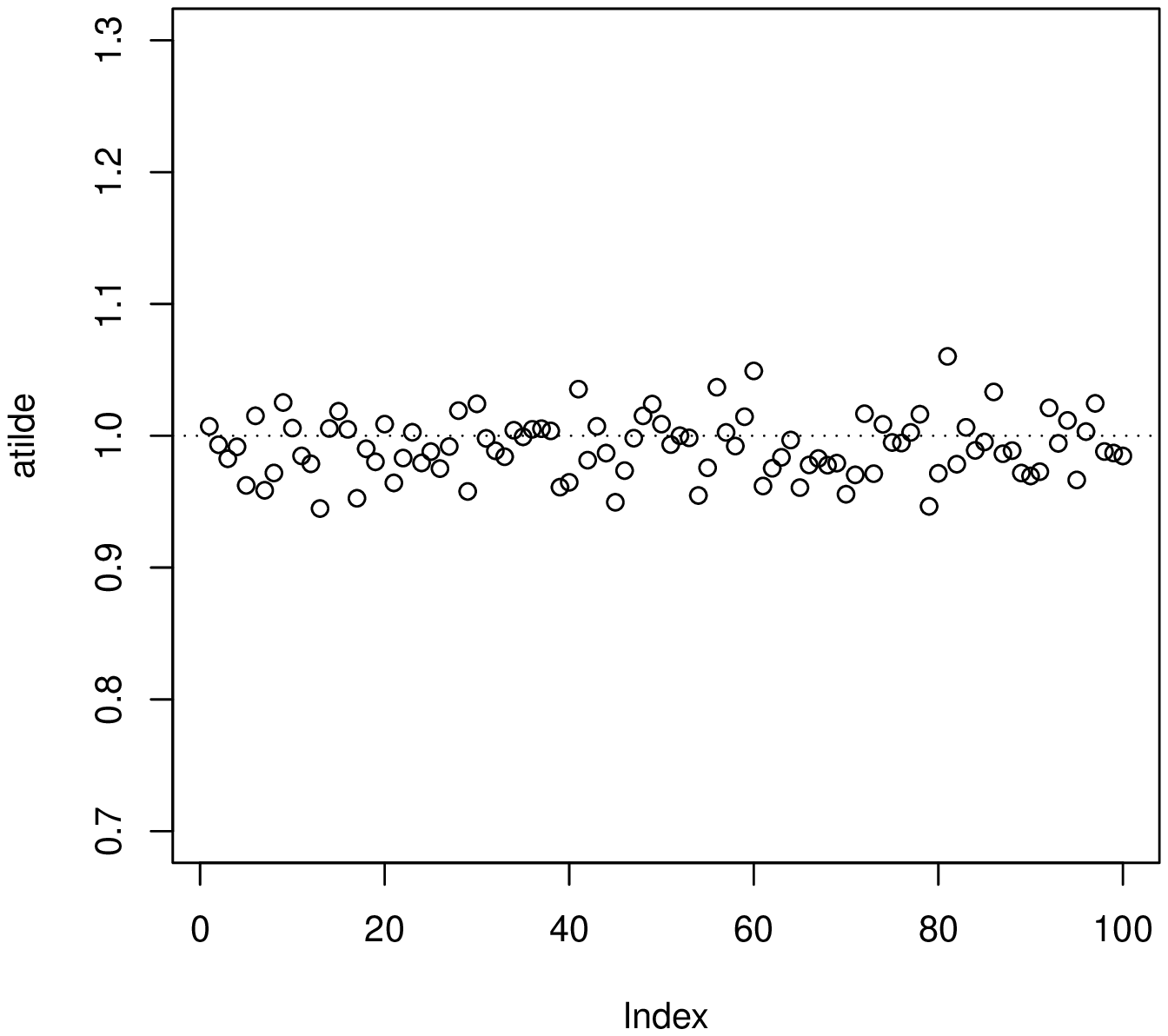,height=0.4\textwidth}}
\hspace{50pt}
\subfigure[The values of $\tilde{b}_T$ -- Overall]{\epsfig{file=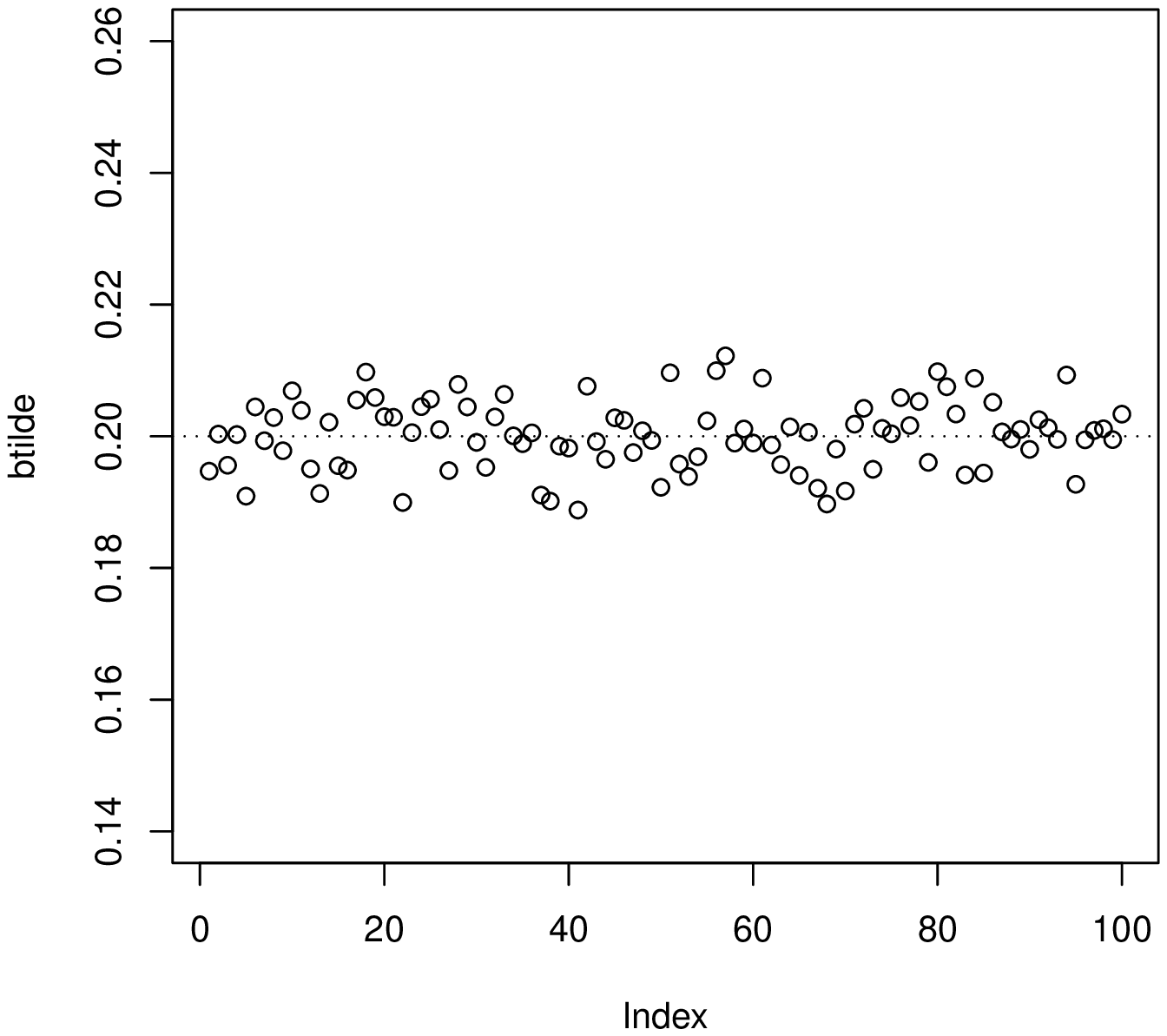,height=0.4\textwidth}}
\caption{The estimators $\tilde{a}_T$ and $\tilde{b}_T$ based on larger sample, $T = 1000$} \label{Figure 10}
\end{figure}

\begin{figure}[h!]
\centering
\subfigure[Q--Q plot of $\sqrt{T}(\hat{a}_T - a)$]{\epsfig{file=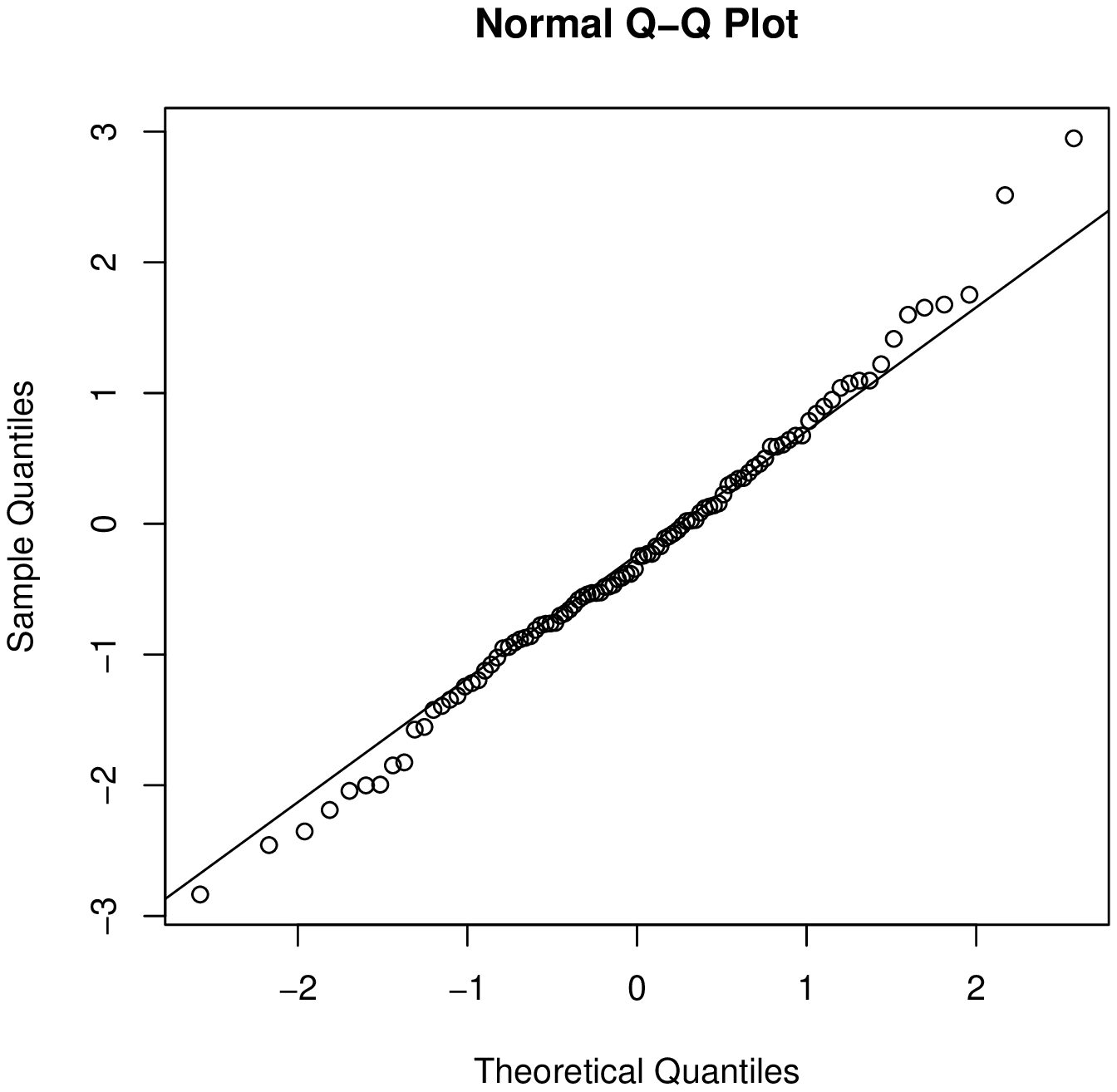,height=0.4\textwidth}}
\hspace{50pt}
\subfigure[Q--Q plot of $\sqrt{T}(\hat{b}_T - b)$]{\epsfig{file=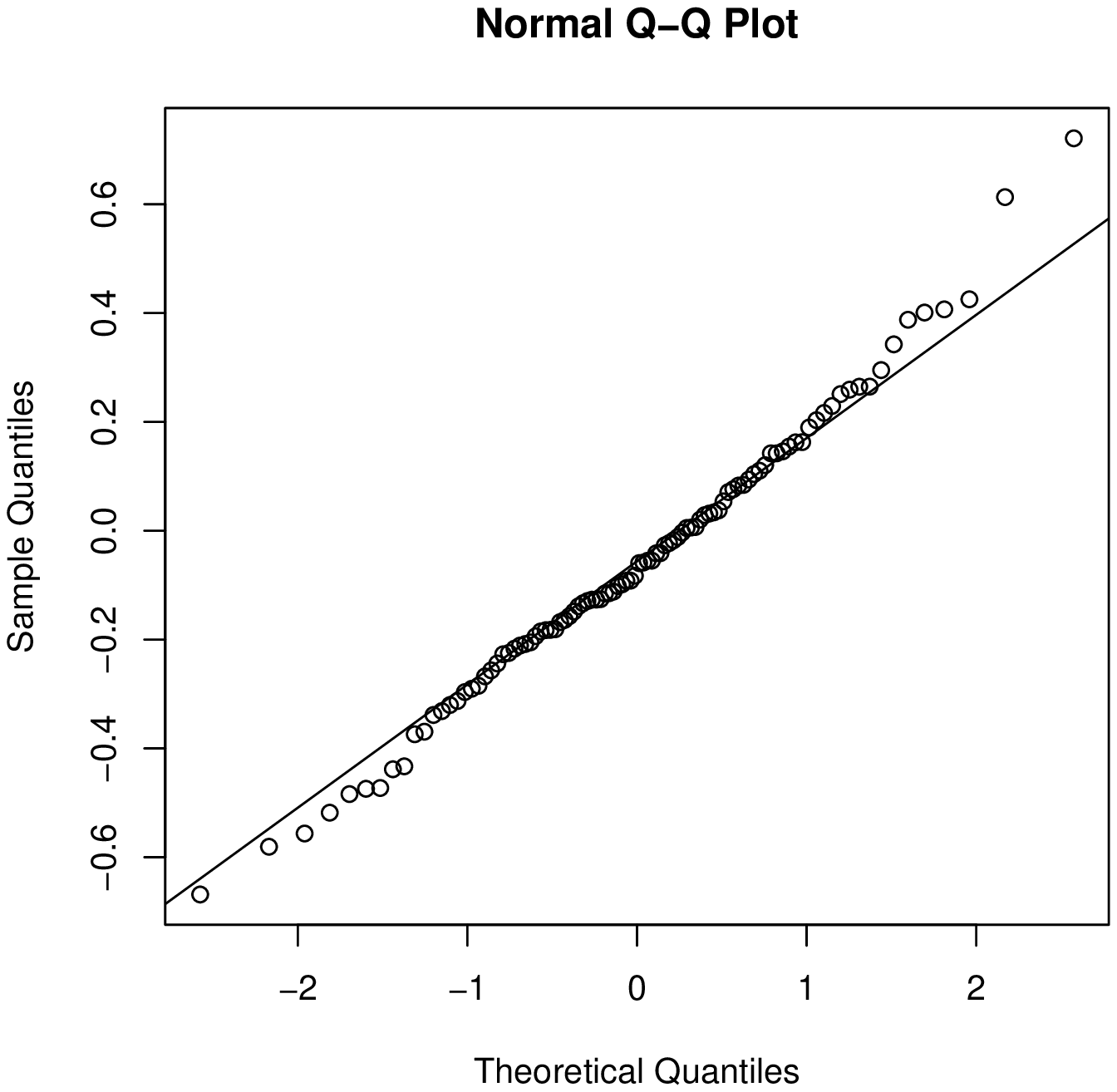,height=0.4\textwidth}}
\caption{Asymptotic normality of $\hat{a}_T$ and $\hat{b}_T$, $T = 1000$} \label{Figure 11}
\end{figure}

\begin{figure}[h!]
\centering
\subfigure[Q--Q plot of $\sqrt{T}(\tilde{a}_T - a)$]{\epsfig{file=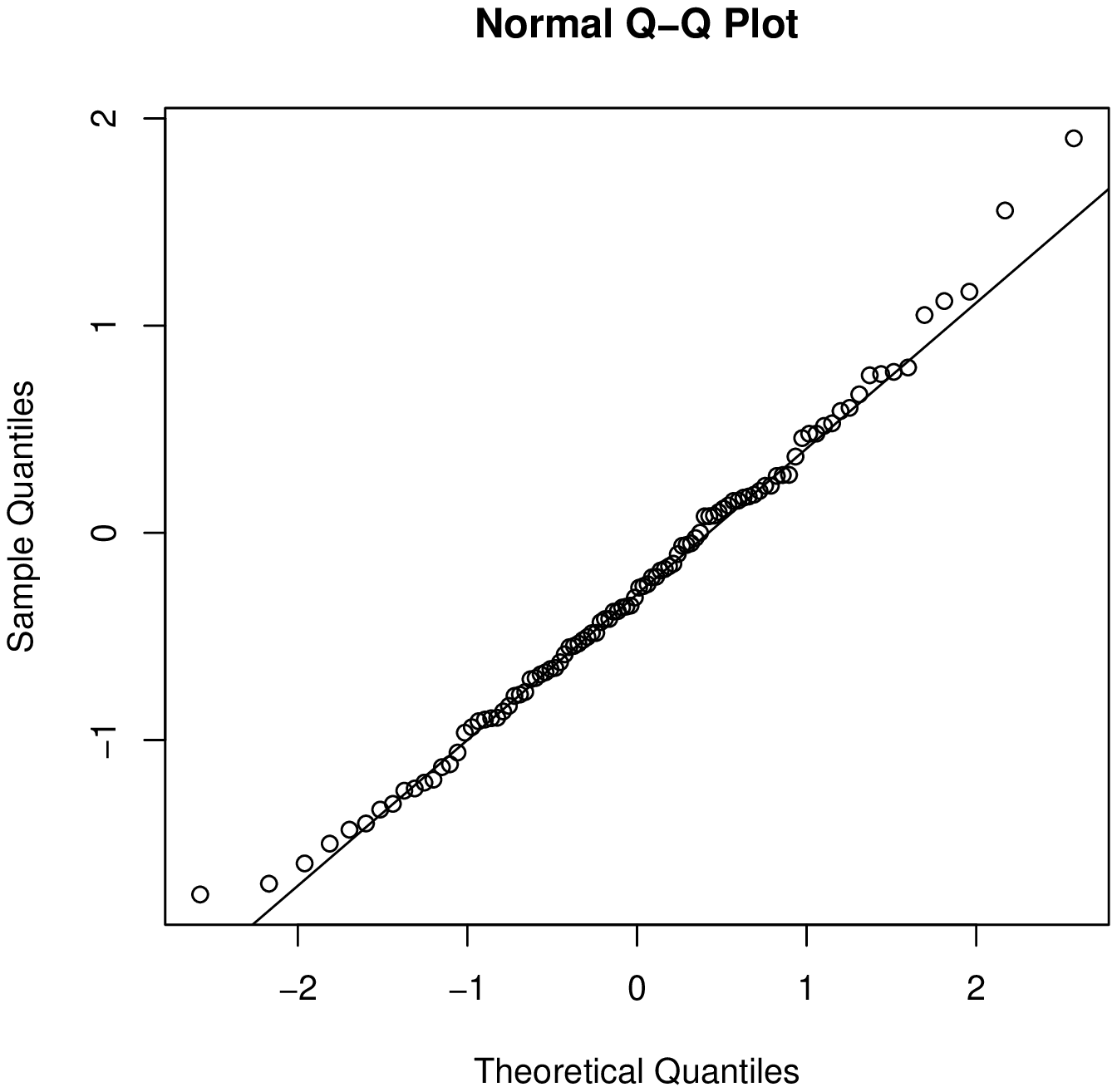,height=0.4\textwidth}}
\hspace{50pt}
\subfigure[Q--Q plot of $\sqrt{T}(\tilde{b}_T - b)$]{\epsfig{file=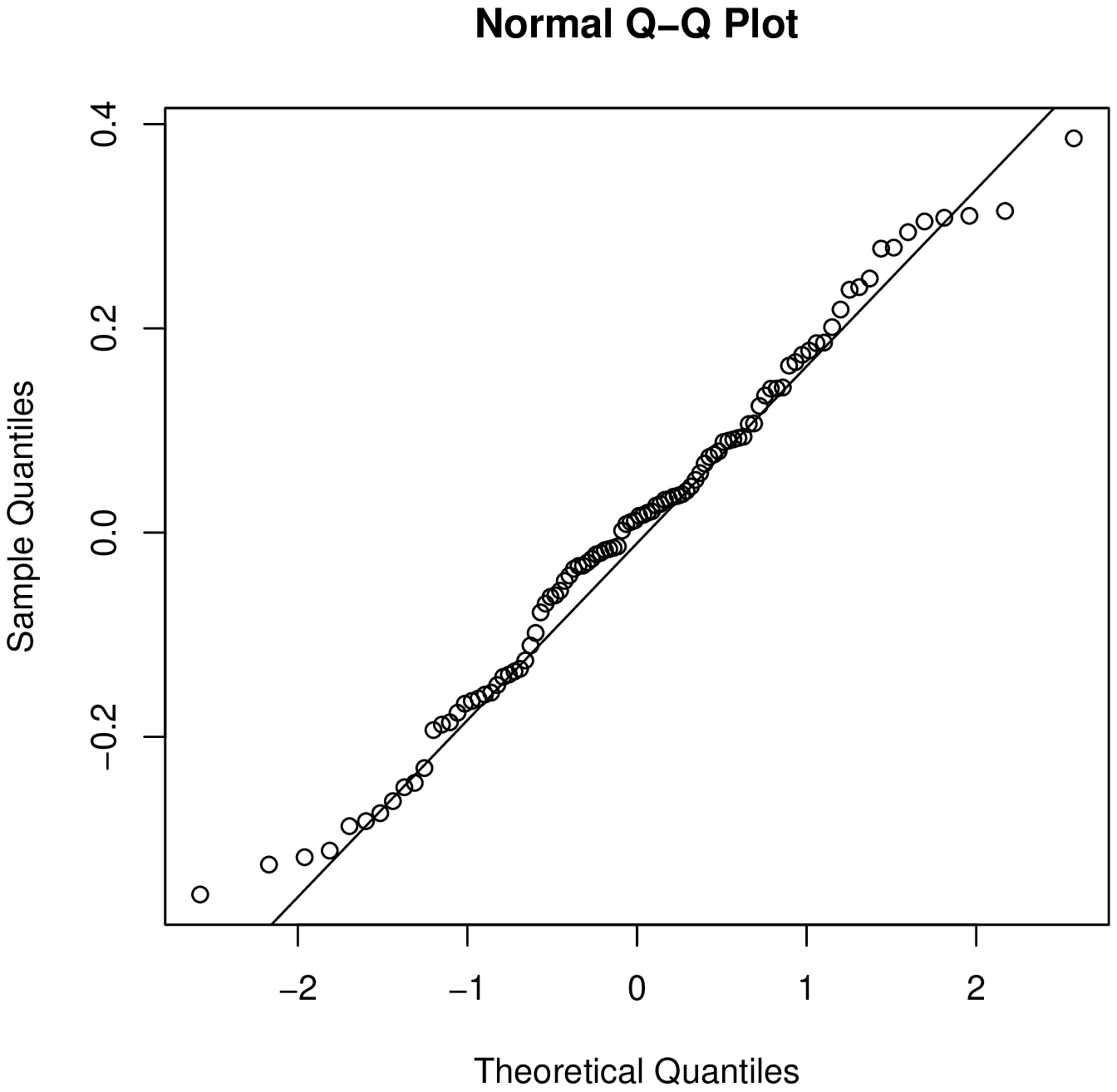,height=0.4\textwidth}}
\caption{Asymptotic normality of $\tilde{a}_T$ and $\tilde{b}_T$, $T = 1000$} \label{Figure 12}
\end{figure}

\begin{table}[h!]
\centering
\begin{tabular}{|l||c|c|c|c|} \hline
& $\hat{a}_T$ & $\hat{b}_T$ & $\tilde{a}_T$ & $\tilde{b}_T$ \\ \hline \hline
Mean & 0.9921 & 0.1982 & 0.9916 & 0.2001 \\ \hline
Var & 1.1285 & 0.0648 & 0.5186 & 0.0280 \\ \hline
Var -- Theoretical & 1.0466 & 0.0776 & 0.4505 & 0.0343 \\ \hline
Relative error -- Maximal & 9 \% & 12 \% & 6 \% & 6 \% \\ \hline
Relative error -- Typical & $\leq$ 4 \% & $\leq$ 5 \% & $\leq$ 3 \% & $\leq$ 3 \% \\ \hline
$p$--value & 0.8690 & 0.7913 & 0.7093 & 0.4192 \\ \hline
\end{tabular}
\bigskip
\caption{The results of the simulation for time $T = 1000$} \label{Table 2}
\end{table}

The conclusions of these simulations are similar as above: The family of estimators $(\tilde{a}_T, \tilde{b}_T)$ can be viewed better as the family $(\hat{a}_T, \hat{b}_T)$ since it has smaller variances and smaller relative errors. Moreover, we can compare the results from Tables \ref{Table 1} and \ref{Table 2}:

\begin{itemize}
\item The estimators for the time $T = 1000$ have $10$ times lesser variances than those for the time $T = 100$. (The actual variances of the estimators for the time $T = 1000$ are $1000$ times smaller than the numbers in the raw "Var" in Table \ref{Table 2}.)
\item The estimators for the time $T = 1000$ have about two times smaller relative errors than those for the time $T = 100$.
\item From the Q--Q plots and from the results of the Wilk--Shapiro tests, it seems that the asymptotic normality of estimators is better for greater time $T$.
\end{itemize}

After running many simulations (also with different parameters $a$, $b$, $N$, $T$, $\Delta t$, $u_1$, $u_2$, $\lambda_n$), we claim that all estimators have their derived properties and that our implementation is correct and fully functional.


\end{document}